\documentclass[reqno]{amsart}
\usepackage[bookmarks=false]{hyperref} 
\usepackage{amssymb}
\usepackage{amsmath}
\usepackage{amsthm} 
\usepackage{mathtools}
\usepackage{color} 
\usepackage[svgnames]{xcolor}
\usepackage[utf8]{inputenc}
\usepackage[T1]{fontenc}
\usepackage{graphicx}
\usepackage{placeins}
\usepackage{vmargin}
\usepackage{setspace}

\hypersetup{
  colorlinks=true,
  linkcolor=Blue  ,          % color of internal links (change box color with linkbordercolor)
  citecolor=Red,        % color of links to bibliography
  filecolor=Magenta,      % color of file links
  urlcolor=Green,           % color of external links
pdfpagemode=UseOutlines,
pdftitle={},
pdfauthor={Tin Van Phan <phanvantin1@gmail.com>},
pdfsubject={Derivative nonlinear Schr\"odinger equation},
pdfkeywords={derivative nonlinear Schr\"oginer equation, stability, solitons, periodic waves} 
}

\numberwithin{equation}{section}
\numberwithin{figure}{section}

\newtheorem{theorem}{Theorem}[section]

\newtheorem{lemma}[theorem]{Lemma}

\newtheorem{corollary}[theorem]{Corollary}

\newtheorem*{assumption a}{Assumption A}
\newtheorem*{assumption b}{Assumption B}
\newtheorem*{assumption c}{Assumption C}

\theoremstyle{definition}

\theoremstyle{remark}
\newtheorem{remark}[theorem]{Remark}

\DeclarePairedDelimiter{\norm}{\lVert}{\rVert}

\newcommand{\N}{\mathbb{N}}
\newcommand{\R}{\mathbb{R}}

\renewcommand{\leq}{\leqslant}
\renewcommand{\geq}{\geqslant}

\DeclareMathAlphabet{\mathpzc}{OT1}{pzc}{m}{it}

\renewcommand{\Im}{\mathcal I\!\mathpzc{m}}

\begin{document}

\title[]{Orbital Stability of Solitons and Scattering Theory for the Perturbed Derivative Nonlinear Schrödinger Equation}

\author[Phan Van Tin]{Phan Van Tin}

\address{VNU University of Engineering and Technology, Vietnam National University
  \newline\indent
  144 Xuan Thuy, Cau Giay, Ha Noi
  }
\email{\url{phanvantin1@gmail.com}}

\subjclass[2020]{35Q55; 35C08; 35Q51}

\date{\today}
\keywords{Nonlinear derivative Schr\"odinger equations, single power perturbation}

\begin{abstract} 
We consider the following derivative nonlinear Schr\"odinger equation with a single power-type perturbation
\begin{equation*}
i\partial_tu+\partial_x^2u+i|u|^2\partial_xu+b |u|^pu=0,
\end{equation*}
with $b\geq 0$ and $p\geq 4$. When $b=0$ or $p=4$, the equation possesses a family of two-parameter solitons; see, for instance, \cite{CoOh06,Oh14}. Moreover, the authors established the orbital stability/instability of these solitons. In \cite[Corollary]{CoOh06}, a criterion for orbital of solitons was proved. Using the explicit formula of solitons to compute the necessary quantities, the authors verify that the solitons are orbitally stable across most of their range of existence (i.e., $c^2 < 4\omega$). When $b\neq 0$ or $p>4$, an explicit formula for the soliton profile is unavailable, making it difficult to verify the criterion in \cite{CoOh06}. In this paper, we prove the soliton profile varies smoothly with respect to parameters $b$ and $p$. More precisely, we show that the solitons change slowly when $b$ is sufficiently small or $p$ is sufficiently close to $4$ (Lemma \ref{lm2.5}, Lemma \ref{lm2.6}). Consequently, we obtain the orbital stability of solitons in these cases (Theorem \ref{thm:main_1}, Theorem \ref{thm:main_2}). In the borderline case ($c=2\sqrt{\omega}$), the soliton also varies smoothly with respect to the parameter $b$ (Lemma \ref{lm:behave_solitons_zero mass}). However, the orbital stability/instability of solitons in this case remains an open question.

The solutions to \eqref{eq1} are fail to scatter even for small initial data (Lemma \ref{lm:scattering_theory_1}), where the original result for $b=0$ was proved in \cite{BaWuXu20}. In \cite{HaOz94}, the authors were successfully constructed the modified wave operator when $b=0$. Moreover, the authors proved that under a smallness of solution, if the solution scatters then it is identically zero. In our case, the presence of perturbation $b|u|^pu$ does not alter the results in \cite{HaOz94} (Theorem \ref{thm:modified scattering operator}, Theorem \ref{thm:non exists non trivial scattering solutions}). 
\end{abstract}

\maketitle
\tableofcontents

\section{Introduction}

We consider the following derivative nonlinear Schr\"odinger equation with a single power perturbation
\begin{equation}\label{eq1}
i\partial_tu+\partial_x^2u+i|u|^2\partial_xu+b |u|^pu=0,
\end{equation}
with $p\geq 4$. The unknown function $u$ is a complex-valued function of time $t\in\R$ and space $x\in\R$. The equation \eqref{eq1} admits three conservation quantities with respect to time:
\begin{align}
E(u)&=\frac{1}{2}\int_{\R}|u_x|^2 dx+\frac{1}{4}\Im\int_{\R}|u|^2\overline{u}u_x dx-\frac{b}{p+2}\int_{\R}|u|^{p+2}dx, \quad \text{ (Energy) }\\
M(u)&=\frac{1}{2}\int_{\R}|u|^2dx,\quad \text{ (Mass) }\\
P(u)&=-\frac{1}{2}\int_{\R}\overline{u}u_x dx. \quad \text{ (Momentum) }
\end{align}
When $b=0$, the equation \eqref{eq1} is called the derivative nonlinear Schödinger equation that was introduced in Plasma Physics as a simplified model for Alfven waves propagation, see \cite{Mjølhus_76,SuSu99}. Since then, it has attached a lot of attention from the mathematical community.\\

The local well posedness of \eqref{eq1} in the energy space $H^1$ was obtained in \cite{HaOZ92,Ha93}. In low regularity spaces $H^s(\R)$ ($s\geq\frac{1}{2}$), local well posedness was studied in \cite{Takaoka01}. On the torus, local existence in $H^s(\mathbb{T})$ was proved in \cite{Her06}. Moreover, by the use of a conservation law, the problem was showed to be globally well-posed for $s\geq 1$ and for data which is small in $L^2(\R)$. Results for $s<\frac{1}{2}$ were obtained by \cite{Takaoka16}. Recently, the authors \cite{HaOzVi25} proved that solutions were global in $H^2(\R)$ under a smallness assumption of initial data. In \cite{Wu13,Wu15}, the author proved global existence in $H^1(\R)$ for initial data $u_0$ having $L^2$-norm smaller than $\sqrt{4\pi}$. The method introduced in \cite{Wu13,Wu15} was used in the torus in \cite{MoOh15}. Recently, using investigating integrability techniques, global existence was proved in weight Sobolev spaces see \cite{LiSiSu13,LiPeSu18,PeSh18}. It then was shown in \cite{BaPe22} that solutions are global in $H^{\frac{1}{2}}$ for any given initial data in $H^{\frac{1}{2}}(\R)$. Although the associated Cauchy problem is ill-posed in $H^s(\R)$ ($s<\frac{1}{2}$), the authors \cite{HaKiNtVi26} proved that the Cauchy problem was global in $L^2(\R)$. Precisely, there is a jointly continuous map $\Phi:\R\times L^{2}(\R)\rightarrow L^2(\R)$ that agrees with the data-to-solution map when restricted to Schwartz-class initial data. For the other results on local well posedness and global well posedness, we refer the reader to \cite{Ha21,FuHaIn17,PiTa25,TsFu81,TsFu80,Sa15,PeSaSh17,LiPoSa19,
KiNtVi23,JeLiPeSu18,JeLiPeSu20,JeLiPeSu20_global,Hao07,
WuGu17,CoKeStTaTa01,AmSi15,Tin22,HaOz94_cauchyproblem}. Although solutions are global in the energy space $H^1(\R)$ when $b=0$, the existence of blowing up solutions for $b\neq 0$ remains an open question except in half line case (see \cite{Wu13} for Dirichlet boundary condition, \cite{Tin21_halfline} for Robin boundary condition at zero).\\

When $b=0$ or $p=4$, the equation \eqref{eq1} admits a family of two-parameters solitons. Precisely, given real parameter $\omega>0$ and $-2\sqrt{\omega}<c\leq 2\sqrt{\omega}$, there exist solitons solutions of \eqref{eq1} of the form
\[
u_{\omega,c}(t,x)=e^{i\omega t}\phi_{\omega,c}(x-ct).
\] 
The profile $\phi_{\omega,c}$ are unique up to phase shift and translation (see e.g  \cite{CoOh06}) and given by an explicit formula. The stability of the solitons was proved in \cite{CoOh06} for any $(\omega,c)$ with $c^2<4\omega$. When $c=2\omega$, $c=2\sqrt{\omega}$, the associated soliton of \eqref{eq1}decay  polynomially. The orbital stability/instability of the soliton in this case remains an open question. We refer the reader to \cite{FuHa22,NiMaWu17} and the references therein for related results in this direction. If we present the scaling in the definition of the stability, the authors \cite{KwWu18} proved that the soliton is stable under an smallness condition of solutions. Later, \cite{Kim24} was success to remove the smallness condition. However, the result in \cite{Kim24} was restricted to the solutions with non-positive initial energy and momentum. We refer the reader to \cite{LiSiSu13,NiOhWu17,MiTaXu17,Guo18,Ha21,Ni20,FuHa22,Ha22,MiTaXu23,LiNi25} for the other results on stability/instability of solitons. \\

When solution is global, we study its long time behavior. The existence of multi-solitons for derivative nonlinear Schr\"odinger equations was obtained in \cite{Tin_soliton_22,Tin24_soliton}. These solutions were shown to be stable in \cite{CoWu18,TaXu18} under certain parameter conditions. The presentation of the multi-solitons implies the existence of global solutions with arbitrary large mass. Scattering theory for derivative nonlinear Schrödinger equations was investigated in \cite{BaWuXu20,HaOz94}. \\

When $b\neq 0$ and $p\neq 4$, there is no explicit formula for the soliton profile. We note that the explicit formula for solitons allows us to compute the quantities that determine the stability/instability of soliton solutions. Although the lack of an explicit formula for solitons, we have its behavior when $b>0$ sufficiently small or $p>4$ sufficiently close to $4$. To state our main result, we define soliton profile $\varphi_{\omega,c}$ by
\[
\phi_{\omega,c}(x)=\varphi_{\omega,c}(x)\exp\left(\frac{c}{2}ix-\frac{i}{4}\int_{-\infty}^x|\varphi_{\omega,c}(y)|^2 \ dy\right),
\] 
for $-2\sqrt{\omega}<c\leq 2\sqrt{\omega}$, where $\varphi_{\omega,c}$ is the unique positive, even solution to the following equation
\[
-\varphi_{xx}+\left(\omega-\frac{c^2}{4}\right)\varphi-\frac{1}{2}\Im(\varphi\overline{\varphi}_x)\varphi+\frac{c}{2}|\varphi|^2\varphi-\frac{3}{16}|\varphi|^4\varphi-b|\varphi|^p\varphi=0.
\]
In the case $b=0$, we denote the soliton profile by $\varphi_0$. When $b>0$ changes slightly, we show that the soliton profile $\varphi$ varies smoothly with respect to   parameter $b$, $p$ (see Lemma \ref{lm2.5}, \ref{lm:behave_solitons_zero mass}, \ref{lm2.6}). Consequently, we obtain the stability of solitons when $b$ and $p-4$ sufficiently small. We have the following results.
\begin{theorem}\label{thm:main_1}
Let $p\geq 4$ and $(\omega,c)$ be an admissible pair. Then, there exists $b_0=b_0(\omega,c)$ sufficiently small such that for all $b\in (0,b_0)$, the solitary wave $u_{\omega,c}(t,x)=e^{i\omega t}\phi_{\omega,c}(x-ct)$ of \eqref{eq1} is orbitally stable.
\end{theorem}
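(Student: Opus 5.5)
The plan is to apply the stability criterion of \cite[Corollary]{CoOh06}, the Grillakis--Shatah--Strauss framework adapted to \eqref{eq1}, and to check its hypotheses by perturbation from $b=0$, where everything is explicit. The action functional is $S_{\omega,c}(u)=E(u)+\omega M(u)+cP(u)$, and $\phi_{\omega,c}$ is a critical point of it. The criterion asks for three things. First, the spectral conditions on $S''_{\omega,c}(\phi_{\omega,c})$: exactly one negative eigenvalue, and a kernel spanned by $i\phi_{\omega,c}$ and $\partial_x\phi_{\omega,c}$, with the rest of the spectrum bounded away from zero. Second, the Hessian of the scalar function $d(\omega,c)=S_{\omega,c}(\phi_{\omega,c})$ must have exactly one positive eigenvalue. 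Third, the map $(\omega,c)\mapsto\phi_{\omega,c}$ must be $C^1$. For $b=0$ (and for the $p=4$ family) all three were verified in \cite{CoOh06} for admissible $(\omega,c)$ with $c^2<4\omega$.

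\textbf{Step 1 (continuity in $b$).} By Lemma \ref{lm2.5}, the profile $\varphi_{\omega,c}=\varphi_{\omega,c}^b$ depends smoothly on $b$ in $H^1$, and is jointly $C^1$ in $(\omega,c,b)$ near $b=0$, with $\varphi^b\to\varphi_0$. I expect this to come from the implicit function theorem applied to the real ODE for $\varphi$ in the space of even $H^1$ functions, using the nondegeneracy of $\varphi_0$. Since $\phi_{\omega,c}$ is obtained from $\varphi$ through the phase formula, the same regularity passes to $\phi_{\omega,c}$, to its $\partial_\omega$ and $\partial_c$ derivatives, and hence to the functions $M(\phi_{\omega,c})$ and $P(\phi_{\omega,c})$. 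The identities $\partial_\omega d=M(\phi_{\omega,c})$ and $\partial_c d=P(\phi_{\omega,c})$ then show that $d''(\omega,c)$ depends continuously on $b$.

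\textbf{Step 2 (spectral conditions persist).} The operator $S''_{\omega,c}(\phi^b)$ is a relatively compact perturbation of $S''_{\omega,c}(\phi^0)$. Its coefficients are polynomial in $\phi$, $\bar\phi$, $\phi_x$, together with the term $b|\phi|^p$. The essential spectrum lies in $[\omega-c^2/4,\infty)$ uniformly in $b$. The perturbation converges in operator norm from $H^1$ to $H^{-1}$ as $b\to0$, because $p\geq4$ makes $|u|^pu$ a $C^2$ map on $H^1$. By continuity of isolated eigenvalues, for small $b$ there is still exactly one negative eigenvalue. The kernel is at least two-dimensional by the symmetries, namely phase rotation and translation. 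It cannot be larger, since the remaining eigenvalues near zero stay bounded away from it, with a gap inherited from $b=0$.

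\textbf{Step 3 (Hessian sign) and main obstacle.} At $b=0$, \cite{CoOh06} shows $\det d''(\omega,c)<0$ when $c^2<4\omega$, which is exactly one positive and one negative eigenvalue. By Step 1, $\det d''$ is continuous in $b$, so it stays negative for $b\in(0,b_0)$ with $b_0=b_0(\omega,c)$. The criterion then gives orbital stability.

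The delicate point is Step 1 at the level of second derivatives. We need $\partial_\omega\phi$ and $\partial_c\phi$ to converge as $b\to0$, not just $\phi$ itself, and this includes the nonlocal phase factor $\int_{-\infty}^x|\varphi|^2$. I would handle this by differentiating the profile equation in $(\omega,c)$ and inverting the linearized operator, which is uniformly invertible on even functions for small $b$. Everything is needed locally uniformly, so uniform exponential decay of $\varphi^b$, which holds because $\omega-c^2/4>0$, is used to control the phase term and the momentum.
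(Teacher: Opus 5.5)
Your proposal is correct and follows the same skeleton as the paper's proof. It uses the criterion of Corollary~\ref{coro**} (equivalently Theorem~\ref{Gr_Sh_St} with $n(S''_{\omega,c}(\phi_{\omega,c}))=1$), the fact that $\det d''(\omega,c)<0$ at $b=0$ from \cite{CoOh06}, and persistence of this sign for small $b$.

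The difference is in how the two inputs are justified. The paper argues quantitatively:
\begin{itemize}
\item it writes $\varphi^2$ by quadrature, \eqref{eq_varphi_formula}, and proves $F_1=O(\sqrt b)$;
\item it deduces $\|\varphi-\varphi_0\|_{L^\infty}+\|\varphi-\varphi_0\|_{L^2}=O(\sqrt[4]{b})$;
\item it gets the same rate for $\partial_\omega\varphi-\partial_\omega\varphi_0$ and $\partial_c\varphi-\partial_c\varphi_0$ by inverting $L_0$ on $\{\partial_x\varphi_0\}^\perp$ (Lemma~\ref{lm2.5});
\item it takes the spectral structure (one negative direction, two-dimensional kernel) for all $b\ge0$ from Theorem~\ref{thm*} and Corollary~\ref{coro_negative_part}.
\end{itemize}
You instead apply the implicit function theorem on even functions, using the invertibility of $L_0$ there. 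This gives joint $C^1$ dependence on $(\omega,c,b)$ with no rate. You get the spectral count by Kato-type perturbation, which is valid only for small $b$, but that is all the theorem needs. Your route is softer and more self-contained. It even supplies the differentiability in $(\omega,c)$ that the paper uses tacitly when it differentiates \eqref{eq_of_var_phi}.

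You should add one step: the implicit-function branch coincides with the positive even profile. This follows from the classification in Lemma~\ref{eq:soliton profile all}, since a real even $H^1$ solution close to $\varphi_0$ must be $+\varphi$.

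Two corrections:
\begin{itemize}
\item Lemma~\ref{lm2.5} provides rates, not joint $C^1$ dependence. The smoothness you need must come from your implicit-function argument, not from that lemma.
\item The nonlocal phase is not a real obstacle. Since $M(\phi_{\omega,c})=\frac12\int\varphi^2\,dx$ and $P(\phi_{\omega,c})=-\frac c4\int\varphi^2\,dx+\frac18\int\varphi^4\,dx$, the matrix $d''$ involves only $\int\varphi^2$, $\int\varphi\,\partial\varphi$ and $\int\varphi^3\,\partial\varphi$ with $\partial\in\{\partial_\omega,\partial_c\}$. So $L^2$-continuity of $\partial_\omega\varphi,\partial_c\varphi$ and $L^\infty$-continuity of $\varphi$ in $b$ suffice.
\end{itemize}
Also, the $p=4$, $b>0$ stability result is due to \cite{Oh14}, not \cite{CoOh06}, and holds only for $c<2s^*\sqrt\omega$; you never use it, so this does not affect the argument.
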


\begin{theorem}\label{thm:main_2}
Let $b\geq 0$ and $(\omega,c)$ be an admissible pair such that $-2\sqrt{\omega}<c<2s^*\sqrt{\omega}$ and $k_0=\frac{1}{\gamma}\left(2c+2\sqrt{c^2+4a\gamma}\right)>e^{\frac{1}{3}}$. Then, there exists $p_0=p_0(\omega,c)$ sufficiently close to $4$ such that for all $p \in [4,p_0)$, the solitary wave $u_{\omega,c}(t,x)=e^{i\omega t}\phi_{\omega,c}(x-ct)$ of \eqref{eq1} is orbitally stable.
\end{theorem}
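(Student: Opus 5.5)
We prove Theorem \ref{thm:main_2} by a perturbation argument in the exponent $p$ around the integrable-like case $p=4$. The stability criterion of \cite{CoOh06} (and its general Grillakis--Shatah--Strauss form) reduces orbital stability to two spectral facts about the linearized operator $S''_{\omega,c}(\phi_{\omega,c})$, where $S=E+\omega M+cP$ is the action:
\begin{itemize}
\item[(i)] it has exactly one negative eigenvalue and kernel spanned by the symmetry generators $i\phi$ and $\partial_x\phi$;
\item[(ii)] the Hessian $d''(\omega,c)$ of $d(\omega,c)=S_{\omega,c}(\phi_{\omega,c})$ has exactly one positive eigenvalue, i.e.\ $\det d''(\omega,c)<0$.
\end{itemize}
The plan is to verify (ii) at $p=4$ under the stated hypotheses, and then show that both (i) and (ii) persist for $p\in[4,p_0)$ by continuity.

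\textbf{Step 1: the base case $p=4$.} Here the $b$-term merges with the quintic term, with coefficient $-\frac{3}{16}+b$, written $\gamma$ in the statement. The profile $\varphi$ then solves an ODE with the explicit sech-type solution
\[
\varphi^2(x)=\frac{2a}{\sqrt{c^2+4a\gamma}\,\cosh(\sqrt{a}\,x)\pm c}\quad(\text{up to normalization}),\qquad a=\omega-\frac{c^2}{4}.
\]
With this formula one computes $M(\phi_{\omega,c})$ and $P(\phi_{\omega,c})$ in closed form, together with their partial derivatives in $(\omega,c)$. Substituting into $\det d''$, the sign condition becomes an inequality in $c/\sqrt{\omega}$. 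I expect the restriction $c<2s^*\sqrt{\omega}$ and the condition $k_0>e^{1/3}$ to be exactly what makes $\det d''<0$. The likely source is a logarithm or arctan term in $M$, which would explain the appearance of $e^{1/3}$. Property (i) at $p=4$ follows as in \cite{CoOh06}, via the gauge transform to a scalar Schrödinger operator with a single negative eigenvalue and simple kernel.

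\textbf{Step 2: continuity in $p$.} Invoke Lemma \ref{lm2.6}: the map $p\mapsto\varphi_{\omega,c}^{(p)}$ is continuous, even $C^1$, into $H^1$ near $p=4$, locally uniformly in $(\omega,c)$. It is also $C^1$ in $(\omega,c)$, which comes from the implicit function theorem applied on even functions, using nondegeneracy of the base profile. Consequently $M,P$ and their first derivatives in $(\omega,c)$ converge to their $p=4$ values. Hence $\det d''$ is continuous in $p$ and stays negative for $p$ near $4$.

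For (i), the linearized operator depends continuously on $p$ in operator norm relative to $-\partial_x^2+a$. The essential spectrum stays in $[a,\infty)$ with $a>0$. By a Kato-type perturbation argument, the single negative eigenvalue persists. The kernel cannot grow, since the isolated zero eigenvalue has multiplicity exactly $2$, forced by the two symmetries. This uses the nondegeneracy from the IFT construction.

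\textbf{Main obstacle.} The hardest step is the explicit sign verification in Step 1, namely computing $\partial_\omega M$, $\partial_c M$, $\partial_\omega P$ and $\partial_c P$ from the explicit profile and reducing $\det d''<0$ to the hypotheses on $c$ and $k_0$. I would first parametrize by $\kappa=c/(2\sqrt{\omega})$ and use the homogeneity of $d$ to reduce to a one-variable inequality. A secondary difficulty is the uniformity of Lemma \ref{lm2.6} in $(\omega,c)$, which is needed to pass derivatives through the limit. If only pointwise continuity is available, I would instead pass derivatives through the limit by differentiating the profile equation and applying the IFT on the joint parameter $(\omega,c,p)$.
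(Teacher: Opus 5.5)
Your skeleton is the paper's: the sign of $\det d''$ at $p=4$ is carried over to $p\in[4,p_0)$ by Lemma~\ref{lm2.6}, and Corollary~\ref{coro**} concludes. The paper's proof is exactly this one-line combination. The spectral facts in your (i) are supplied for every $p$ by Theorem~\ref{thm*} and Corollary~\ref{coro_negative_part}, so your Kato perturbation step is not needed.

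\textbf{Role of $k_0>e^{1/3}$.} You misplace this hypothesis: it plays no role in the base case. At $p=4$, the scaling $\phi_{\omega,c}(x)=\omega^{1/4}\phi_{1,2s}(\sqrt{\omega}x)$ with $s=c/(2\sqrt{\omega})$ gives $M(\phi_{\omega,c})=m(s)$ and $P(\phi_{\omega,c})=\sqrt{\omega}\,P(\phi_{1,2s})$. Hence $\det d''(\omega,c)=-\frac{1}{4\omega}m'(s)P(\phi_{1,2s})$. Since $\int\varphi_0^2=\frac{4}{\sqrt{\gamma}}\bigl(\frac{\pi}{2}+\arctan\frac{s}{\sqrt{\gamma(1-s^2)}}\bigr)$ is increasing in $s$, one has $\det d''<0$ exactly when $P(\phi_{1,2s})>0$. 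By the characterization of $s^*$ in \cite{Ha21}, this means $s<s^*$; for $b=0$ the formula returns $-1/\omega$. So $k_0$ does not enter at $p=4$, and the paper does not recompute this but relies on \cite{Oh14} and \cite{Ha21}.

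The condition $k_0>e^{1/3}$ is used only inside Lemma~\ref{lm2.6}. With $f(p)=k_0^{p/2}/(p+2)$, one has $f'(4)=\frac{f(4)}{2}(\ln k_0-\frac13)>0$. This gives $K_p(k_0)<0$, hence $k_p<k_0$ for $p$ slightly above $4$, which the quadrature comparison requires.

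\textbf{Your secondary difficulty.} It does not arise. Lemma~\ref{lm2.6} does not give $C^1$ dependence on $p$ into $H^1$, as you state. It gives, at fixed $(\omega,c)$, $\varphi\to\varphi_0$ in $L^2\cap L^\infty$, and $\partial_\omega\varphi\to\partial_\omega\varphi_0$, $\partial_c\varphi\to\partial_c\varphi_0$ in $L^2$. Since $M=\frac12\int\varphi^2$ and $P=-\frac c4\int\varphi^2+\frac18\int\varphi^4$, this is all the entries of $d''$ need. No uniformity in $(\omega,c)$ and no interchange of limits is required.

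\textbf{Your implicit-function-theorem fallback.} Here you depart from the paper, and your route is stronger. Solve $-\varphi''+a\varphi+\frac c2\varphi^3-\frac3{16}\varphi^5-b|\varphi|^p\varphi=0$ on $H^2_{\mathrm{even}}$ jointly in $(\omega,c,p)$. This works because $(\varphi,p)\mapsto|\varphi|^p\varphi$ is $C^1$ from $H^2$ into $L^2$, and $L_0$ is invertible on even functions: its kernel is spanned by the odd function $\partial_x\varphi_0$ and its essential spectrum is $[a,\infty)$. The resulting branch is the positive even profile. Indeed, by the first integral a nonzero real $H^1$ solution never vanishes, so it is $\pm$ a translate of $\varphi$; evenness and closeness to $\varphi_0$ then force it to be $\varphi$ itself.

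This gives continuity of $d''$ in $p$ at $p=4$ without the explicit quadrature of Lemma~\ref{lm2.6}, and hence without the assumption $k_0>e^{1/3}$. You lose only the rate $O(\sqrt[4]{p-4})$, which the theorem does not need.

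Minor corrections: at $p=4$ the quintic coefficient is $-\frac3{16}-b=-\frac3{16}\gamma$, and $\varphi_0^2=\frac{8a}{\sqrt{c^2+4a\gamma}\cosh(2\sqrt{a}x)-c}$.
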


\begin{remark}
The result in Theorem \ref{thm:main_1} does not contradict the well-known results in \cite{Ha22,Ha21,CoOh06,Oh14}. Indeed, Ohta \cite{Oh14} established that for $p=4$ and each $b > 0$, a unique $s^* = s^*(b) \in (0, 1)$ exists such that the soliton $u_{\omega,c}$ is orbitally stable for $c \in (-2\sqrt{\omega}, 2s^* \sqrt{\omega})$ and orbitally unstable for $c \in (2s^* \sqrt{\omega}, 2\sqrt{\omega})$. The number $s^*$ was characterized in \cite{Ha21} as the unique solution to $P(\phi_{1,2s}) = 0$ within the interval $(0,1)$. The author \cite{Ha21} also showed that $s^*(b) \rightarrow 1$ as $b\downarrow 0$. Assume that $p=4$. When b = 0, every $(\omega, c) \in \Omega$ belongs to the stability domain of the soliton (see Figure 1 in \cite{Ha21}). As $b$ increases slightly, the curve $c = 2s* \sqrt{\omega}$ remains close to $c = 2\sqrt{\omega}$, and consequently, $(\omega, c)$ stays within the stability domain. 
\end{remark}

\begin{remark}
When $b=0$ and $c=2\sqrt{\omega}$, the associated soliton of \eqref{eq1}decay  polynomially. The orbital stability/instability of the soliton in this case remains an open question. We refer the reader to \cite{FuHa22,NiMaWu17} and the references therein for related results in this direction. If we present the scaling in the definition of the stability, the authors \cite{KwWu18} proved that the soliton is stable under an smallness condition of solutions. Later, \cite{Kim24} was success to remove the smallness condition. However, the result in \cite{Kim24} was restricted to the solutions with non-positive initial energy and momentum.
\end{remark}

To prove theorem \ref{thm:main_1}, we use the result in Lemma \ref{lm2.5} and the criterion for stability of solitons obtained in \cite{CoOh06} (see Corollary \ref{coro**}). The authors \cite{CoOh06} proved that when $b=0$, $\partial_{\omega}M(\phi_{\omega,c})>0$ if $c<0$ and $\det(d''(\omega,c))=-\frac{1}{\omega}<0$ for all $(\omega,c)$ such that $c^2<4\omega$. Since the soliton profiles depend smoothly on the parameter $b$, for a sufficiently small $b$ (which may depend on $\omega$ and $c$), we have:
\[
\partial_{\omega}M(\phi_{\omega,c})>0,\quad\text{ if } c<0,
\]  
\[
\det(d''(\omega,c))<0,\quad \forall (\omega,c) \text{ such that } c^2<4\omega.
\]
Although we cannot exactly compute the above quantities, it suffices to us to obtain the orbital stability of solitons when $b$ is sufficiently small. This proves Theorem \ref{thm:main_1}. The proof of Theorem \ref{thm:main_2} is similar. \\

In the case $b=0$, the authors \cite{BaWuXu20} proved that the solution is global and scatters for small initial data in $H^s(\R)$, $\frac{1}{2}\leq s\leq 1$ when $\sigma\ge 2$, while the opposite result fails to hold when $0<\sigma<2$. When $b\neq 0$ and $p\geq 4$, the same result holds and the scattering solutions are difficult to obtain. More precisely, we have the following result. 
\begin{lemma}
\label{lm:scattering_theory_1}
Let $(\omega, c)$ be such that $4\omega>c^2$. Let $\varphi_{\omega,c}$ be the positive, even solution of \eqref{eqofvarphi} and $\phi_{\omega,c}$ be defined in \eqref{eq:varphi_change to_phi}, then
\[
\|\phi_{\omega,c}\|_{H^1(\mathbb{R})} \to 0,\quad  \text{when } c \to -2\sqrt{\omega}.
\]
\end{lemma}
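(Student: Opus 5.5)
The plan is to exploit the fact that the profile equation \eqref{eqofvarphi} is an autonomous ODE with a first integral. This lets us control $\varphi_{\omega,c}$ through its maximal amplitude, without needing an explicit formula. Since $\varphi=\varphi_{\omega,c}$ is real and positive, $\Im(\varphi\overline{\varphi}_x)=0$. Set $a=\omega-\frac{c^2}{4}>0$, so that $a\to 0$ as $c\to-2\sqrt{\omega}$. The equation then reads
\[
-\varphi''+a\varphi+\frac{c}{2}\varphi^3-\frac{3}{16}\varphi^5-b\varphi^{p+1}=0 .
\]
Multiplying by $\varphi'$ and integrating from $\pm\infty$ (using the decay of $\varphi$ and $\varphi'$) gives
\[
(\varphi')^2=G(\varphi),\qquad G(\varphi):=a\varphi^2+\frac{c}{4}\varphi^4-\frac{1}{16}\varphi^6-\frac{2b}{p+2}\varphi^{p+2}.
\]
Let $m=\varphi(0)=\max\varphi$, where $\varphi$ is even and decreasing on $(0,\infty)$. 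Then $m$ is the first positive zero of $G(\varphi)/\varphi^2$, that is,
\[
a=-\frac{c}{4}m^2+\frac{m^4}{16}+\frac{2b}{p+2}m^p .
\]
We may restrict to $c<0$, since $c$ is close to $-2\sqrt\omega$. All terms on the right are nonnegative, so this yields $\frac{|c|}{4}m^2\le a$. Hence $m^2\le 4a/|c|\to 0$.

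Next we bound the norms of $\varphi$ by the change of variables $x\mapsto\varphi$ on $(0,\infty)$, where $dx=-d\varphi/\sqrt{G(\varphi)}$. Writing $\varphi=ms$ and eliminating $a$ via the relation above, we get
\[
\frac{G(ms)}{m^2s^2}=\frac{|c|}{4}m^2(1-s^2)+\frac{m^4}{16}(1-s^4)+\frac{2b}{p+2}m^p(1-s^p)\ \ge\ \frac{|c|}{4}m^2(1-s^2).
\]
Therefore
\[
\int_{\R}\varphi^2\,dx=2\int_0^m\frac{\varphi^2\,d\varphi}{\sqrt{G(\varphi)}}\le 2\int_0^1\frac{m^2 s\,ds}{m\sqrt{|c|/4}\sqrt{1-s^2}}=\frac{4m}{\sqrt{|c|}}\longrightarrow 0 .
\]
Similarly, $\int\varphi'^2\,dx=2\int_0^m\sqrt{G(\varphi)}\,d\varphi$. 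Since $G(\varphi)\le a\varphi^2$, this is at most $m^2\sqrt a\to 0$.

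Finally, we transfer the bounds to $\phi_{\omega,c}$ via \eqref{eq:varphi_change to_phi}. We have $|\phi_{\omega,c}|=\varphi$, and
\[
\phi_{\omega,c}'=\Big(\varphi'+i\big(\tfrac c2-\tfrac14\varphi^2\big)\varphi\Big)\exp(\cdots).
\]
Hence
\[
\|\phi_{\omega,c}\|_{H^1}\le \|\varphi\|_{L^2}+\|\varphi'\|_{L^2}+\big(\tfrac{|c|}{2}+\tfrac{m^2}{4}\big)\|\varphi\|_{L^2}\to 0,
\]
because $|c|\le 2\sqrt\omega$ stays bounded.

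The only delicate point is justifying the first integral and the identification of $m$ as the first zero of $G(\varphi)/\varphi^2$. That is, we need that $\varphi$ is the homoclinic orbit that is monotone on each half line, so the change of variables is legitimate. This follows from the existence and uniqueness of the positive even profile (\cite{CoOh06} and the preceding discussion). The integrable singularity $(1-s^2)^{-1/2}$ at the turning point is harmless.
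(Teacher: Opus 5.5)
Your proof is correct, and it takes a different route from the paper's.

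\textbf{The paper's route.} The paper first uses the Nehari-type identity (multiply \eqref{eqofvarphi} by $\varphi$ and integrate), which reduces the claim to $\|\varphi\|_{L^2}\to 0$ and $\|\varphi\|_{L^\infty}\to 0$. The $L^\infty$ bound comes from $\varphi(0)^2=k_b<k_0=y_+=8a/(\sqrt{c^2+4a}-c)$. This is essentially your amplitude bound $m^2\le 4a/|c|$. The $L^2$ bound comes from the implicit representation \eqref{eq_varphi_formula} derived in the proof of Lemma \ref{lm2.5}, via the pointwise estimate $\varphi^2\le \frac{8a}{|c|(\cosh(2\sqrt a x)+1)}$.

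\textbf{Your route.} You work directly by quadrature, $dx=-d\varphi/\sqrt{G(\varphi)}$, and get the lower bound on $G$ by eliminating $a$ through $G(m)=0$. You handle the gradient with the same quadrature instead of the Nehari identity. You also write out the passage from $\varphi$ to $\phi_{\omega,c}$, which the paper only asserts. The facts you flag as delicate are exactly what the paper establishes in Lemma \ref{eq:soliton profile all}: the first integral \eqref{eq_varphi_vaphi_x^2} and $\varphi_x<0$ on $(0,\infty)$. For the amplitude relation you only need $\varphi'(0)=0$, not the first-zero property.

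\textbf{What each approach buys.} Your argument is self-contained and uses only the sign structure of $G$ for $c<0$, $b\ge 0$. It gives explicit rates: $\|\varphi\|_{L^2}^2\le 8\sqrt a/|c|$, the same constant as the paper since $\int_0^\infty(\cosh x+1)^{-1}dx=1$, and $\|\varphi'\|_{L^2}^2\le 4a^{3/2}/|c|$. In particular, you never have to control the phase $\text{arcosh}(w(0))-2\sqrt a F_1(x)$ in \eqref{eq_varphi_formula}. The paper's pointwise bound tacitly replaces the argument of $\cosh$ by $2\sqrt a x$. That requires $\text{arcosh}(w(0))\ge 2\sqrt a F_1(x)$ for all $x>0$, which the paper does not check, and it is not automatic since $F_1>0$ on $(0,\infty)$. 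The paper's route, in exchange, reuses the machinery of Lemma \ref{lm2.5} and aims at a near-explicit pointwise description of the profile.
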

The proof of Lemma \ref{lm:scattering_theory_1} relies on the implicit formula of solitons \eqref{eq_varphi_formula}, which is given in the proof of Lemma \ref{lm2.5}.\\

When $b=0$, the authors in \cite[Theorem 1.1]{HaOz94} proved that for any sufficiently small profile in the weighted Sobolev space, there exists a unique solution to \eqref{eq1} that exhibits a specific asymptotic behavior toward this profile. Moreover, solutions that scatter in the standard sense to the given profile in $H^1(\R)$ are identically zero \cite[Theorem 1.2]{HaOz94}. When $b\neq 0$ and $p\geq 4$, the presence of perturbation $b|u|^pu$ does not affect the results obtained in \cite{HaOz94}. We have the following results.
\begin{theorem} \label{thm:modified scattering operator}
Let $p\geq 4$. There exists an $\varepsilon_1 > 0$ with the following properties:\\

(1) For any $\phi_+ \in H^{4,0} \cap H^{0,4}$ with $\|\phi_+\|_{2,1} < \varepsilon_1$, \eqref{eq1} has a unique solution $u \in C(\mathbb{R}; H^{2,0}) \cap L_{\text{loc}}^4(\mathbb{R}; W^{2,\infty})$ such that for any $\alpha$ with $\frac{1}{2} < \alpha < 1$
    \begin{align*}
    (\text{MW}_+) \quad &\left\| \mathcal{G} u(t) - \exp(iS^+(t)) U(t)\phi_+ \right\|_{2,0}  \\
    &+ \left( \int_t^\infty \left\|\mathcal{G} u(\tau) - \exp(iS^+(\tau)) U(\tau)\phi_+ \right\|_{W^{2,\infty}}^4 d\tau \right)^{1/4}  \\
    &= O(t^{-\alpha}) \quad \text{as} \quad t \to \infty, 
    \end{align*}
    
(2) For any $\phi_- \in H^{4,0} \cap H^{0,4}$ with $\|\phi_-\|_{2,1} < \varepsilon_1$, \eqref{eq1} has a unique solution $u \in C(\mathbb{R}; H^{2,0}) \cap L_{\text{loc}}^4(\mathbb{R}; W^{2,\infty})$ such that for any $\alpha$ with $\frac{1}{2} < \alpha < 1$
    \begin{align*}
    (\text{MW}_-) \quad &\left\|\mathcal{G} u(t) - \exp(iS^-(t)) U(t)\phi_- \right\|_{2,0}  \\
    &+ \left( \int_{-\infty}^t \left\|\mathcal{G} u(\tau) - \exp(iS^-(\tau)) U(\tau)\phi_- \right\|_{W^{2,\infty}}^4 d\tau \right)^{1/4} \\
   & = O(|t|^{-\alpha}) \quad \text{as} \quad t \to -\infty, 
    \end{align*}
where
\[
S^{\pm}(t, x) = -\frac{1}{2} (\log |t|) \frac{x}{2|t|} \left| \hat{\phi}_{\pm} \left( \frac{x}{2t} \right) \right|^2,
\]
\[
(U(t)f)(x) = \frac{1}{\sqrt{4\pi it}} \int_{\mathbb{R}} e^{i \frac{(x-y)^2}{4t}} f(y) dy,
\]
\[
(\mathcal{G}u)(t,x)=\exp \left( - \frac{i}{2} \int_{-\infty}^x |u(t, y)|^2 dy \right) u(t, x),
\]
\[
H^{m,s} = \left\{ f \in \mathcal{S}'(\mathbb{R}) : \|f\|_{m,s} = \left\| (1 + |x|^2)^{s/2} (1 - \partial_x^2)^{m/2} f \right\|_{L^2} < \infty \right\}, \quad m, s \in \mathbb{R},
\]
and $\hat{\cdot}$ denotes the Fourier transform defined by
\[
\hat{f}(\xi) = \frac{1}{\sqrt{2\pi}} \int_{\mathbb{R}} e^{-ix\xi} f(x) dx.
\]
\end{theorem}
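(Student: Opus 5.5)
The plan is to follow the construction of Hayashi--Ozawa \cite{HaOz94} for $b=0$ and treat $b|u|^pu$ as a short-range perturbation. The case $t\to-\infty$ (part (2)) follows from part (1) by time reversal, so I concentrate on $t\to+\infty$.

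\textbf{Step 1: gauge transform.} Apply the gauge $v=\mathcal{G}u$. As in \cite{HaOz94}, this turns \eqref{eq1} into a system for $v$ and $w=\partial_x v$ (plus auxiliary gauged functions) whose nonlinear terms contain no derivative loss, for example
\[
i\partial_t v+\partial_x^2 v=-\tfrac{i}{2}v^2\overline{\partial_x v}-\tfrac{1}{4}|v|^4v-b|v|^pv.
\]
The exponential gauge factor has modulus one, so the extra term is still $b|v|^pv$. After differentiating, it produces terms of the form $|v|^p\partial_x v$ and $|v|^{p-2}v^2\overline{\partial_x v}$. These contain at most one derivative and are of degree $p+1\ge5$, so they do not destroy the energy structure used in \cite{HaOz94}.

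\textbf{Step 2: cubic part and approximate solution.} The cubic part $v^2\overline{\partial_x v}$ is long range. I would keep the modified profile $V(t)=\exp(iS^+(t))U(t)\phi_+$ exactly as in \cite{HaOz94}. The phase $S^+$ is chosen to cancel the resonant cubic interaction, so that the remainder $R=(i\partial_t+\partial_x^2)V-N(V)$ of the $b=0$ system satisfies $\|R(t)\|_{2,0}\lesssim \varepsilon_1^3 t^{-2}\log t$ given $\phi_+\in H^{4,0}\cap H^{0,4}$. The new contribution $b|V|^pV$ obeys $\|V(t)\|_{W^{2,\infty}}\lesssim \varepsilon_1 t^{-1/2}$ (from the stationary phase asymptotics of $U(t)$). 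Hence
\[
\|\,|V|^pV\|_{2,0}\lesssim \|V\|_{L^\infty}^{p}\|V\|_{2,0}\lesssim \varepsilon_1^{p+1}t^{-p/2}\le \varepsilon_1^{p+1} t^{-2},
\]
using $p\ge4$. This is integrable, and it is no worse than the cubic remainder. This is exactly where $p\ge4$ enters; $p>2$ would suffice for integrability, but the stated range matches the rest of the paper.

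\textbf{Step 3: final-value problem.} Solve for $v=V+z$ with $z(t)\to0$ as $t\to\infty$ by a contraction in the space
\[
X=\Bigl\{z:\ \sup_{t\ge T}t^{\alpha}\bigl(\|z(t)\|_{2,0}+\|z\|_{L^4(t,\infty;W^{2,\infty})}\bigr)\le \varepsilon_1\Bigr\}.
\]
For the derivative terms I would use energy estimates on the gauged system rather than the plain Duhamel formula, as in \cite{HaOz94}, together with Strichartz estimates for the $L^4W^{2,\infty}$ part. The perturbative terms $b(|V+z|^p(V+z)-|V|^pV)$ are bounded by
\[
\bigl(\|V\|_{W^{2,\infty}}+\|z\|_{W^{2,\infty}}\bigr)^{p}\|z\|_{2,0}\lesssim t^{-p/2}t^{-\alpha},
\]
whose time integral closes for $\alpha<1$. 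One then inverts the gauge to recover $u\in C(\R;H^{2,0})$. This uses the fact that $\mathcal{G}$ is a bi-Lipschitz map on small balls of $H^{2,0}$ and preserves the $L^2$ norm. Next, extend $u$ globally in time by the local theory in $H^{2,0}$. For large negative times this needs the $H^2$ global result under smallness \cite{HaOzVi25}, or the a priori control coming from the conservation laws, since the mass is small. Uniqueness in the class (MW$_+$) follows from the same difference estimate.

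\textbf{Main obstacle.} The hard step is closing the energy estimate in $H^{2,0}$ for $z$ when $z$ is tested against the perturbative terms. One needs to check that $\partial_x^2(|v|^pv)$ produces no term requiring $\partial_x^3 z$ after integration by parts. This does hold, because $|v|^pv$ is semilinear without derivatives, but it requires the regularity $p\ge4$ (so that $|v|^p$ is $C^2$) in order to differentiate twice. I would check this first, and otherwise the argument should be a routine adaptation of \cite{HaOz94}.
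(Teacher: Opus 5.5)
Your outline is essentially the paper's proof: gauge by $\mathcal{G}$, reuse the Hayashi--Ozawa approximate solution, observe that the new forcing obeys $\||v_+|^pv_+\|_{2,0}\lesssim t^{-p/2}\le t^{-2}$, and close a fixed point in a $t^{\alpha}$-weighted space before undoing the gauge. This decay is where $p\ge 4$ is used: it gives $t^{1-p/2}\lesssim t^{-\alpha}$ for every $\alpha<1$, which $p>2$ alone would not. It is not needed for any regularity of $|v|^pv$.

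Two corrections. First, $\psi=\mathcal{G}u$ actually satisfies the Kaup--Newell form $i\psi_t+\psi_{xx}+i(|\psi|^2\psi)_x+b|\psi|^p\psi=0$; your displayed equation is not what $\mathcal{G}$ produces. Second, the second gauge $(w^{(1)},w^{(2)})$ of \cite{HaOz94} yields a derivative-free system, so the paper closes the contraction for $\eta=(w^{(1)},w^{(2)})-(w^{(1)}_+,w^{(2)}_+)$ purely by Duhamel and Strichartz in $L^\infty_tH^{1,0}\cap L^4_tW^{1,\infty}$. No energy estimates are needed, and the ``main obstacle'' you single out never arises.
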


\begin{theorem} \label{thm:non exists non trivial scattering solutions}
Let $p\geq 4$. We assume that $u(t, x)$ is a solution of \eqref{eq1} satisfying
\begin{align*}
\makebox[1.5cm][l]{(\text{A.1})} \hfill & u(t, x) \in C(\mathbb{R}; H^{1,0}) \hfill \\
\intertext{and}
\makebox[1.5cm][l]{(\text{A.2})} \hfill & \|u(t)\|_{1,0} \le C(\|u(0)\|_{1,0}), \quad \|u(t)\|_{4} = O(|t|^{-\frac{1}{4}}) \quad \text{as} \quad t \to \infty. \hfill \\
\intertext{Moreover, we assume that there exists $\phi_+ \in H^{4,0} \cap H^{0,4}$ such that}
\makebox[1.5cm][l]{(\text{A.3})} \hfill & \|(\mathcal{G} u)(t) - U(t)\phi_+\|_{2} \to 0 \quad \text{as} \quad t \to \infty. \hfill
\end{align*}
Then $u(t, x)$ is identically zero.
\end{theorem}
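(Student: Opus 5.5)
We follow the argument of \cite[Theorem 1.2]{HaOz94} and check that the extra term $b|u|^pu$ with $p\geq 4$ is harmless. The first step is to pass to the gauge-transformed unknown $v=\mathcal{G}u$. A direct computation shows that $v$ solves an equation of the form
\[
i\partial_t v+\partial_x^2 v = N_0(v,\partial_x v)+b|v|^p v,
\]
where $N_0$ is the cubic/quintic nonlinearity treated in \cite{HaOz94}. The gauge factor has modulus one, so $|v|=|u|$, and it commutes with the multiplication by $|u|^p$. Hence the perturbation keeps the form $b|v|^pv$, while the assumptions (A.1)--(A.3) carry over to $v$ with the same bounds.

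The core of the argument is an asymptotic identity for a well-chosen functional, compared with the free evolution $U(t)\phi_+$. As in \cite{HaOz94}, we consider the pairing
\[
F(t)=\Im\,\bigl(v(t),\,U(t)\psi\bigr)_{L^2} \quad\text{or}\quad \frac{d}{dt}\bigl(v(t),U(t)\psi\bigr)
\]
for a suitable test function $\psi$ built from $\phi_+$, for instance $\psi=x\phi_+$ or a momentum-type pairing. By Duhamel,
\[
\frac{d}{dt}\bigl(v,U(t)\psi\bigr)=-i\bigl(N_0+b|v|^pv,\,U(t)\psi\bigr).
\]
The cubic derivative part of $N_0$ yields a contribution of exact size $t^{-1}$ times a coercive quantity such as $\int\xi|\hat\phi_+|^4\,d\xi$ or $\|\phi_+\|_4^4$. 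This is the long-range term. Integrating it from $t$ to $\infty$ gives a logarithmic divergence unless $\phi_+=0$, and this contradicts the convergence in (A.3). This part is exactly \cite{HaOz94}, and we take it over unchanged.

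We then check that the new term is integrable. Using $\|U(t)\psi\|_\infty\lesssim |t|^{-1/2}\|\psi\|_{L^1}$, Hölder's inequality, and interpolation between $\|v\|_4=O(t^{-1/4})$ from (A.2) and $\|v\|_\infty\lesssim\|v\|_{1,0}^{1/2}\|v\|_2^{1/2}$, we get
\[
\bigl|\bigl(|v|^pv,U(t)\psi\bigr)\bigr|\leq \|v\|_{p+1}^{p+1}\|U(t)\psi\|_\infty.
\]
Here the factor $\|v\|_{p+1}^{p+1}\lesssim\|v\|_\infty^{p-3}\|v\|_4^4$ is $O(t^{-1})$ using only the $L^4$ decay and the boundedness of $\|v\|_\infty$, and the factor $\|U(t)\psi\|_\infty$ contributes $|t|^{-1/2}$. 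The whole term is therefore $O(t^{-3/2})$ and integrable at infinity, so it cannot cancel the logarithmic term. If sharper decay is needed, we replace $\|v\|_\infty$ by the decay of $U(t)\phi_+$ obtained from (A.3) together with $\|U(t)\phi_+\|_\infty\lesssim t^{-1/2}$, which gives even better decay for $p\geq4$.

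The main obstacle is the bookkeeping in the gauge transformation. One must make sure that the equation for $v$ in the perturbed case produces no new non-integrable cross terms, such as terms coming from $\partial_x$ of the gauge phase. Since $b|u|^pu$ contains no derivative and $|v|=|u|$, these terms vanish. The only remaining requirement is that the coercive sign argument of \cite{HaOz94} goes through unchanged, which gives $\phi_+=0$. Then $\|v(t)\|_2\to0$, and mass conservation gives $M(u)=0$, so $u\equiv0$.
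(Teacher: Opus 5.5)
Your proposal follows the same route as the paper---pass to $\psi=\mathcal{G}u$, which solves $i\psi_t+\psi_{xx}+i(|\psi|^2\psi)_x+b|\psi|^p\psi=0$, and run the argument of \cite[Theorem 1.2]{HaOz94}---and it supplies the check that the paper's one-line citation omits: the perturbation adds only an $O(t^{-3/2})$, hence integrable, term to the derivative of the pairing $(\psi(t),U(t)\chi)$, because $\|\psi\|_{p+1}^{p+1}\le\|\psi\|_\infty^{p-3}\|\psi\|_4^4=O(t^{-1})$ and $\|U(t)\chi\|_\infty\lesssim t^{-1/2}\|\chi\|_{L^1}$. One correction to your sketch of the main term: $\int\xi|\hat\phi_+|^4\,d\xi$ is not coercive (it vanishes, e.g., for every real-valued $\phi_+$), so you must pair with $U(t)\chi$ for general $\chi$; this gives $\frac{d}{dt}(\psi,U(t)\chi)=\frac{c_0}{t}\int\xi|\hat\phi_+|^2\hat\phi_+\overline{\hat\chi}\,d\xi+o(t^{-1})$ with $c_0\neq0$, and convergence of the pairing for all $\chi$ then forces $\xi|\hat\phi_+|^2\hat\phi_+\equiv0$, i.e.\ $\phi_+=0$.
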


\begin{remark}
Let $u$ be given as in Theorem \ref{thm:non exists non trivial scattering solutions} and $\psi=\mathcal{G}u$. From (A.3), we have, for all $x\in\R$:
\begin{align*}
&\left|\exp\left(-\frac{i}{2}\int_{-\infty}^x |u(t,y)|^2dy\right)-\exp\left(-\frac{i}{2}\int_{-\infty}^x |U(t)\phi_+|^2dy\right)\right|\\
&\leq C\left|\int_{-\infty}^x |u(t,y)|^2dy-\int_{-\infty}^x |U(t)\phi_+|^2dy\right|\\
&= C\left|\int_{-\infty}^x |\psi(t,y)|^2dy-\int_{-\infty}^x |U(t)\phi_+|^2dy\right|\\
&\leq C\norm{\psi-U(t)\phi_+}_{L^2}(\norm{\psi}_{L^2}+\norm{\phi_+}_{L^2})\rightarrow 0,
\end{align*}
as $t\rightarrow\infty$. Thus, (A.3) can be replaced by the following condition
\[
\norm{u(t)-\mathcal{G}^{-1}(U(t)\phi_+)}_2\rightarrow 0.
\]
The quantities $(MW_{+})$ and $(MW_{-})$ can be treated via an analogous approach; however, we will not pursue this detail here.
\end{remark}

The proof of Theorems \ref{thm:modified scattering operator} and \ref{thm:non exists non trivial scattering solutions} adopts the approach established in \cite{HaOz94}. We sketch their proofs in Section \ref{sec:scattering theory}. \\

The structure of the paper is outlined as follows. In Section \ref{sec:solitary waves}, we prove some important properties of solitons: existence, asymptotic behavior. In particular, for sufficiently small $b$ and $p-4$, we obtain the similar behaviors of solitons in the case $b=0$, Lemma \ref{lm2.5}, Lemma \ref{lm:behave_solitons_zero mass}, Lemma \ref{lm2.6}. These results imply the orbital stability of solitons by the criterion stated in Corollary \ref{coro**}. In Section \ref{sec:scattering theory}, we sketch the proof of Theorem \ref{thm:modified scattering operator} and Theorem \ref{thm:non exists non trivial scattering solutions}. The well-known results used in the proof of the main results    are given in Section \ref{sec:appendix}.

\section{Solitary waves}\label{sec:solitary waves}

\subsection{Existence}

A solitary wave is a solution of \eqref{eq1} of the form
\[
u_{\omega,c}(t,x)=e^{i\omega t}\phi_{\omega,c}(x-ct),
\]
where, $\phi_{\omega,c}\in H^1(\R)$ is a solution to the elliptic ordinary differential equation
\begin{equation}\label{eqofphi}
-\phi_{xx}-i|\phi|^2\phi_x+\omega\phi+ic\phi_x-b|\phi|^p\phi=0,
\end{equation}
with $4\omega\geq c^2$. Define
\begin{equation}\label{eq:varphi_change to_phi}
\phi_{\omega,c}(x)=\varphi_{\omega,c}(x)\exp\left(\frac{c}{2}ix-\frac{i}{4}\int_{-\infty}^x|\varphi_{\omega,c}(y)|^2 \ dy\right).
\end{equation}
From \eqref{eqofphi}, $\varphi_{\omega,c}\in H^1(\R)$ verifies the equation
\[
-\varphi_{xx}+\left(\omega-\frac{c^2}{4}\right)\varphi-\frac{1}{2}\Im(\varphi\overline{\varphi}_x)\varphi+\frac{c}{2}|\varphi|^2\varphi-\frac{3}{16}|\varphi|^4\varphi-b|\varphi|^p\varphi=0.
\]
Argue as in \cite{CoOh06}, we reduce the above equation to
\begin{equation}
\label{eqofvarphi}
-\varphi_{xx}+\left(\omega-\frac{c^2}{4}\right)\varphi+\frac{c}{2}|\varphi|^2\varphi-\frac{3}{16}|\varphi|^4\varphi-b|\varphi|^p\varphi=0.
\end{equation}
Define \[
a=\omega-\frac{c^2}{4}\geq 0.
\]
To prove the existence of $\varphi_{\omega,c}$, we recall the following result (see \cite{BeLi83}, \cite[Proposition 2.1]{LiTsZw21}).
\begin{lemma}\label{lm:existence solution elliptic d1}
Let $g$ be a locally Lipschitz continuous function with $g(0)=0$ and let $G(t)=\int_0^t g(s)ds$. A necessary and sufficient condition for the existence of a solution $\varphi$ of the problem
\begin{equation}\label{eq:problem varphi}
\begin{cases}
\varphi\in C^2(\R),\quad \lim_{x\rightarrow\pm\infty}\varphi(x)=0,\quad \varphi(0)>0,\\
\varphi_{xx}+g(\varphi)=0,
\end{cases}
\end{equation}
is that $k=\inf\{t>0:G(t)=0\}$ exists, $k>0$, $g(k)>0$.
\end{lemma}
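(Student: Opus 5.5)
The statement "immediately above" is Lemma \ref{lm:existence solution elliptic d1}, the classical Berestycki--Lions criterion for one-dimensional positive homoclinic solutions. The plan is the standard phase-plane/energy argument. For a solution of \eqref{eq:problem varphi}, multiplying $\varphi_{xx}+g(\varphi)=0$ by $\varphi_x$ gives the first integral
\[
\tfrac12\varphi_x^2+G(\varphi)=\text{const}.
\]
Since $\varphi,\varphi_x\to0$ at infinity, the constant is $0$; that $\varphi_x\to0$ follows because $\varphi_x$ has limits and $\varphi$ is bounded. So $\tfrac12\varphi_x^2=-G(\varphi)$ along the orbit.

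\emph{Necessity.} Let $\xi$ be a maximum point of $\varphi$, which exists since $\varphi(0)>0$ and $\varphi\to0$ at $\pm\infty$, and set $k:=\varphi(\xi)>0$. At $\xi$ we have $\varphi_x=0$, hence $G(k)=0$. For $0<t<k$, the value $t$ is attained with $G(t)=-\tfrac12\varphi_x^2\le0$. If $G(t)=0$ for some such $t$, then $(t,0)$ lies on the orbit and is an equilibrium ($g(t)=0$ would follow from the local minimum of $G$ and its differentiability). Uniqueness of the Cauchy problem, valid since $g$ is locally Lipschitz, would then force $\varphi\equiv t$, a contradiction. Hence $G<0$ on $(0,k)$ and $k=\inf\{t>0:G(t)=0\}$. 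Finally $g(k)\ge0$ because $G<0$ to the left of $k$ and $G(k)=0$. If $g(k)=0$, then $(k,0)$ is an equilibrium, and uniqueness again gives $\varphi\equiv k$, contradicting decay. So $g(k)>0$.

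\emph{Sufficiency.} Solve the Cauchy problem $\varphi(0)=k$, $\varphi_x(0)=0$. Since $g$ is locally Lipschitz, the solution is unique, so it is even in $x$. By conservation, $\varphi_x^2=-2G(\varphi)$. Because $\varphi_{xx}(0)=-g(k)<0$, $\varphi$ decreases for $x>0$, so $\varphi_x=-\sqrt{-2G(\varphi)}$ while $\varphi\in(0,k)$. This gives the implicit formula
\[
x=\int_{\varphi(x)}^{k}\frac{ds}{\sqrt{-2G(s)}}.
\]
Two properties of this integral are needed. Near $s=k$ the integral converges, since $G(s)\approx g(k)(s-k)$. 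Near $s=0$ it diverges, since $G(s)=O(s^2)$ by the Lipschitz property and $g(0)=0$. Divergence at $0$ means $\varphi$ stays positive and tends to $0$ as $x\to\infty$, which together with evenness gives the solution.

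The main obstacle is the careful handling of the degenerate endpoints. At $s=k$ one must exclude the possibility that the solution stalls there, which is exactly why $g(k)>0$ is needed. At $s=0$ one must show that $0$ is not reached in finite time, which uses the Lipschitz estimate $|G(s)|\le Cs^2$. Both steps rest on uniqueness for Lipschitz ODEs.
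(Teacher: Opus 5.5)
The paper gives no proof of this lemma and simply quotes it from Berestycki--Lions \cite{BeLi83} (see also \cite[Proposition 2.1]{LiTsZw21}); your first-integral/quadrature argument is essentially the standard proof from that source, and it is correct. Two small points to tidy: in the necessity part, since $G\le 0$ on $(0,k)$, a zero $t\in(0,k)$ of $G$ is a local \emph{maximum} (not a minimum) of $G$, which still gives $g(t)=0$; and in the sufficiency part, the divergence of the integral at $0$ only gives positivity of $\varphi$, while $\varphi\to 0$ (rather than to some $\ell\in(0,k)$) comes from the finiteness of $\int_\ell^k ds/\sqrt{-2G(s)}$ for every $\ell>0$.
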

Applying Lemma \ref{lm:existence solution elliptic d1}, we have the following result.
\begin{lemma}\label{lm:existence varphi}
Let $b\ge 0$. Assume that $a>0$ or ($a=0$ and $c=2\sqrt{\omega}>0$). Then, there exists a solution $\varphi$ to the problem \eqref{eq:problem varphi} with 
\[
g(t)=-at-\frac{c}{2}t^3+\frac{3}{16}t^5+bt^{p+1}.
\]
\end{lemma}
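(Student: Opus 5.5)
The plan is to verify the three hypotheses of Lemma \ref{lm:existence solution elliptic d1} for
\[
g(t)=-at-\frac{c}{2}t^3+\frac{3}{16}t^5+bt^{p+1}.
\]
Clearly $g$ is locally Lipschitz on $[0,\infty)$ (indeed $C^1$ since $p\ge 4$), we extend it oddly to $\R$, and $g(0)=0$. Integrating gives
\[
G(t)=-\frac{a}{2}t^2-\frac{c}{8}t^4+\frac{1}{32}t^6+\frac{b}{p+2}t^{p+2}=t^2h(t),\qquad h(t)=-\frac{a}{2}-\frac{c}{8}t^2+\frac{1}{32}t^4+\frac{b}{p+2}t^{p}.
\]
So the zeros of $G$ on $(0,\infty)$ are exactly the zeros of $h$ there. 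Two facts about $h$ will be used throughout. First, $h(t)\to+\infty$ as $t\to\infty$. Second, $h'(t)=t\left(-\frac{c}{4}+\frac{1}{8}t^2+\frac{bp}{p+2}t^{p-2}\right)$, and the bracket is strictly increasing in $t>0$. Hence $h$ is either increasing on $(0,\infty)$ (when $c\le 0$), or decreasing then increasing (when $c>0$).

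In the case $a>0$ we have $h(0)=-a/2<0$. By the unimodal or monotone shape of $h$ and $h(\infty)=+\infty$, $h$ has exactly one zero $k>0$ on $(0,\infty)$, and it is a transversal crossing with $h'(k)>0$. This holds because $h'$ cannot vanish at a point where $h$ crosses from negative to positive after its minimum: if $h'(k)=0$, then $k$ would be the minimum point with $h(k)=0$, yet $h<0$ there. So $k=\inf\{t>0:G(t)=0\}$ exists and is positive. Then $g(k)=G'(k)=2kh(k)+k^2h'(k)=k^2h'(k)>0$.

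In the case $a=0$ with $c=2\sqrt\omega>0$ we have $h(0)=0$, so care is needed to show the infimum is positive rather than $0$. Here $h(t)=t^2\left(-\frac{c}{8}+\frac{1}{32}t^2+\frac{b}{p+2}t^{p-2}\right)=t^2q(t)$, with $q(0)=-c/8<0$ and $q$ strictly increasing to $+\infty$. Thus $G(t)=t^4q(t)<0$ for small $t>0$, and $G$ has a unique positive zero $k$, namely the zero of $q$. Then $g(k)=G'(k)=k^4q'(k)>0$ since $q'>0$.

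The main (mild) obstacle is the degenerate case $a=0$: one must factor out $t^4$ so the infimum is not trivially $0$, and one needs $c>0$ so that $q(0)<0$. This explains the hypothesis $c=2\sqrt\omega>0$; with $c\le 0$ and $a=0$, $G>0$ near $0$ and no solution exists. Once the hypotheses are verified, Lemma \ref{lm:existence solution elliptic d1} yields the solution $\varphi$ of \eqref{eq:problem varphi}.
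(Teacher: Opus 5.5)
Your proof is correct and follows the paper's route: write $G=t^2\cdot(\text{factor})$, use that the bracket in the factor's derivative is strictly increasing to get a unique positive zero $k$, then conclude from Lemma \ref{lm:existence solution elliptic d1}. The only differences are cosmetic. The paper gets $g(k)>0$ from the identity $g(k)/k-2H(k)=a+\frac{1}{16}k^4+\frac{b(p-2)}{p+2}k^p>0$, which also covers $a=0$ without a separate case. You instead use $g(k)=k^2h'(k)$ together with transversality, and you factor out $t^4$ in the degenerate case $a=0$.
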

\begin{proof}
We verify that $g$ satisfies the existence conditions of Lemma \ref{lm:existence solution elliptic d1}.\\
We have
\[
G(t)=\int_0^t g(s)ds=-a\frac{t^2}{2}-\frac{c}{8}t^4+\frac{1}{32}t^6+\frac{b}{p+2}t^{p+2}=\frac{t^2}{2}H(t),
\]
with
\[
H(t)=-a-\frac{c}{4}t^2+\frac{1}{16}t^4+\frac{2b}{p+2}t^p.
\]
We prove that $H$ has only positive root on $(0,\infty)$. Indeed, we have
\[
H'(t)=\frac{-c}{2}t+\frac{1}{4}t^3+\frac{2bp}{p+2}t^{p-1}=tK(t),
\]
with 
\[
K(t)=\frac{-c}{2}+\frac{1}{4}t^2+\frac{2bp}{p+2}t^{p-2}.
\]
The function $K$ satisfies
\[
K'(t)=\frac{t}{2}+\frac{2bp(p-2)}{p+2}t^{p-3}>0, \quad \forall t>0.
\]
This implies that the function $K$ is increasing and has at most one positive root. Hence, the same holds for $H'(t)$. If $H'$ has no positive root then $H$ has at most one positive root. Otherwise, let $t_0$ be the positive root of $H'(t)$. Since $H'(0)<0$, $H'$ is negative on $(0,t_0)$ and positive on $(t_0,\infty)$. However, $H(0)=-a \leq 0$, hence, $H$ is negative on $(0,t_0)$. Thus, in both cases, $H$ has at most one positive root on $(t_0,\infty)$. Moreover, $\lim_{t\rightarrow\infty}H(t)=\infty$ and $H(t)<0$ for sufficiently small $t$. Hence, by the continuity of $H$, there exists a unique $k>0$ such that
\[
H(k)=0.
\]
Moreover,
\[
\frac{g(k)}{k}-2H(k)=a+\frac{1}{16}k^4+\frac{b(p-2)}{p+2}k^p>0,
\]
hence, $g(k)>0$. The proof is complete.
\end{proof}

Follows the argument in \cite[Lemma 2.2]{Tin24} (see also \cite[Theorem 8.1.4]{Ca03}), we have the following result.
\begin{lemma}
\label{eq:soliton profile all}
Let $b\ge 0$. Then there exists a unique positive, even solution $\varphi$ of \eqref{eqofvarphi}, which is decreasing on $\R^+$. Moreover, all nontrivial complex valued solution of \eqref{eqofvarphi} in $H^1(\R)$ are of form
\[
e^{i\theta_0}\varphi(x-x_0),
\]
for $\theta_0,x_0\in\R$. As a consequence, all $H^1(\R)$ solutions of \eqref{eqofphi} are of form
\[
e^{i\theta_0}\phi(x-x_0),
\]
where 
$$\phi(x)=\varphi(x)\exp\left(\frac{c}{2}ix-\frac{i}{4}\int_{-\infty}^x|\varphi(y)|^2dy\right).$$
\end{lemma}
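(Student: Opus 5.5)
Existence of a positive solution comes from Lemma \ref{lm:existence varphi}, which builds on Lemma \ref{lm:existence solution elliptic d1}. The remaining points are positivity, evenness, monotonicity, uniqueness, and then the classification of complex-valued solutions. I would treat the real ODE $\varphi_{xx}+g(\varphi)=0$ with the phase-plane method, following \cite[Theorem 8.1.4]{Ca03}, and then reduce the complex case to the real one as in \cite[Lemma 2.2]{Tin24}.

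\emph{Real solutions.} Let $\varphi\in H^1(\R)$ be a real solution. Then $\varphi\in C^2$, $\varphi,\varphi_x\to0$ at $\pm\infty$, and multiplying by $\varphi_x$ and integrating gives the first integral
\[
\tfrac12\varphi_x^2+G(\varphi)=0,\qquad G(t)=\tfrac{t^2}{2}H(t).
\]
From the proof of Lemma \ref{lm:existence varphi}, $H<0$ on $(0,k)$, $H(k)=0$ and $g(k)>0$.

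\begin{enumerate}
\item Take a real nontrivial solution; we may assume it takes a positive value somewhere. It then attains a positive maximum at some $x_0$, where $\varphi_x=0$. This forces $G(\varphi(x_0))=0$, hence $\varphi(x_0)=k$.
\item Uniqueness for the Cauchy problem with data $(\varphi(x_0),\varphi_x(x_0))=(k,0)$ gives $\varphi(x_0+y)=\varphi(x_0-y)$, so $\varphi$ is symmetric about $x_0$.
\item On $(x_0,\infty)$, $\varphi$ cannot vanish. A zero would force $\varphi_x=0$ there, and then $\varphi\equiv0$ by uniqueness. Also $\varphi_x\neq0$, since $G(\varphi)<0$ whenever $0<\varphi<k$. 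Hence $\varphi$ is positive and strictly decreasing on $(x_0,\infty)$.
\end{enumerate}

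So every nontrivial real solution equals $\pm\varphi_*(\cdot-x_0)$, where $\varphi_*$ is the unique solution of the Cauchy problem $\varphi(0)=k$, $\varphi_x(0)=0$. This $\varphi_*$ is positive, even, and decreasing on $\R^+$, which gives existence and uniqueness of the positive even profile.

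\emph{Complex solutions.} Let $\varphi\in H^1$ be a complex solution of \eqref{eqofvarphi}. The nonlinearity has the form $f(|\varphi|^2)\varphi$ with $f$ real, so
\[
\frac{d}{dx}\Im(\overline{\varphi}\varphi_x)=\Im(\overline{\varphi}\varphi_{xx})=0.
\]
Hence $\Im(\overline{\varphi}\varphi_x)$ is constant, and the constant is $0$ because $\varphi,\varphi_x\to0$ at infinity. Two cases follow.
\begin{itemize}
\item Where $\varphi\neq0$, write $\varphi=\rho e^{i\theta}$ locally. Then $\rho^2\theta_x=0$, so $\theta$ is constant on each connected component of $\{\varphi\neq0\}$.
\item To see that the phase is one global constant, compare $w=e^{-i\theta_0}\varphi$ with a real solution. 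Near a point of $\{\varphi\neq0\}$, $w$ is real. Both $\Re w$ and $\Im w$ solve the same linear ODE $v''+\tilde f(x)v=0$ with $\tilde f=f(|\varphi|^2)$ continuous. Since $\Im w$ vanishes on an open interval, it vanishes together with its derivative at a point, so $\Im w\equiv0$ by linear ODE uniqueness.
\end{itemize}
Thus $\varphi=e^{i\theta_0}\psi$ with $\psi$ real, and the real classification gives $\varphi=e^{i\theta_0}\varphi_*(\cdot-x_0)$, after absorbing the sign into $\theta_0$.

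\emph{Transfer to \eqref{eqofphi}.} Any $H^1$ solution $\phi$ of \eqref{eqofphi} is mapped by the gauge transform \eqref{eq:varphi_change to_phi} to a solution of the full equation for $\varphi$. Following \cite{CoOh06}, the term $\Im(\varphi\overline{\varphi}_x)$ vanishes by the argument above, so $\varphi$ solves \eqref{eqofvarphi}. Inverting the transform gives the stated form. A translation $x_0$ in $\varphi$ produces $\phi(\cdot-x_0)$ times a constant phase, since $e^{icx/2}$ shifts to $e^{icx_0/2}$ times a constant.

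The step I expect to be most delicate is this last one. The reduction in \cite{CoOh06} requires showing that the Gauge-transformed function of an arbitrary $H^1$ solution satisfies $\Im(\varphi\overline{\varphi}_x)\equiv0$, a conserved-quantity argument applied to the original equation with $b\neq0$. I would redo it by checking that $\frac{d}{dx}\bigl(\Im(\overline{\varphi}\varphi_x)+\tfrac14|\varphi|^4\bigr)$-type identities still close, since the perturbation $b|\varphi|^p\varphi$ is gauge invariant.
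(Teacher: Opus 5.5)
Your proposal is correct and takes essentially the same route as the paper. The paper delegates existence, evenness, uniqueness and the complex classification to the phase-plane argument of \cite[Lemma 2.2]{Tin24} and \cite[Theorem 8.1.4]{Ca03}, and proves monotonicity with the same first integral plus uniqueness of the positive root of $H$. It rules out a second critical point on $\R^+$ by a periodicity contradiction, and you should use that same argument in your Step 3 to exclude $\varphi$ returning to the level $k$ on $(x_0,\infty)$. The final step you flag is not delicate: in the gauge-transformed equation the coefficient $-\frac12\Im(\varphi\overline{\varphi}_x)$ multiplying $\varphi$ is real, so $\frac{d}{dx}\Im(\overline{\varphi}\varphi_x)=\Im(\overline{\varphi}\varphi_{xx})=0$ holds verbatim, and decay then gives $\Im(\overline{\varphi}\varphi_x)\equiv 0$ with no extra identity needed.
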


\begin{proof}
The existence, positivity and evenness of $\varphi$ are established similarly to \cite[Lemma 2.2]{Tin24}. Moreover, $\varphi(0)=k$ and $\varphi_x(0)=0$. As in \cite{Tin24}, every positive solution of \eqref{eqofvarphi} is a translation of $\varphi$. Thus, there exists a unique positive, even solution of \eqref{eqofvarphi}. It remains to prove that $\varphi$ is decreasing on $\R^+$. Multiplying both sides of \eqref{eqofvarphi} by $\varphi_x$, we obtain 
\[
\frac{d}{dx}\left(\frac{-(\varphi_x)^2}{2}+\frac{a\varphi^2}{2}+\frac{c\varphi^4}{8}-\frac{\varphi^6}{32}-\frac{b\varphi^{p+2}}{p+2}\right)=0.
\]  
Since $\varphi,\varphi_x\xrightarrow[x\rightarrow\pm\infty]{} 0$, we have
\begin{equation}
\label{eq_varphi_vaphi_x^2}
-(\varphi_x)^2+a\varphi^2+\frac{c\varphi^4}{4}-\frac{\varphi^6}{16}-\frac{2b\varphi^{p+2}}{p+2}=0.
\end{equation}
Next, we show that $\varphi_x$ is non-vanishing on $\R^+$. Arguing by contradiction, suppose that there exists $x_0>0$ such that $\varphi_x(x_0)=0$. Then, by \eqref{eq_varphi_vaphi_x^2}, $\varphi(x_0)$ is a positive solution of $H(t)=0$. This implies that $\varphi(x_0)=k$ since $H$ has only one positive root on $(0,\infty)$. Therefore, $\varphi(x)=\varphi(x+x_0)$ for all $x>0$. It follows that $\varphi$ is a periodic function of period $x_0$, which is a contradiction. In conclusion, $\varphi_x(x)\neq 0$, for all $x>0$. Moreover, $\varphi\xrightarrow[x\rightarrow\infty]{} 0$, hence, $\varphi_x<0$ on $\R^+$. The remainder of the proof follows similarly to \cite[Lemma 2.2]{Tin24}. 

\end{proof}

\subsection{Asymptotic behavior of soliton profile with respect to parameter $b$}

In this section, we study the behavior of the solitary wave profile i.e the positive solution of \eqref{eqofvarphi}. Let $\varphi$ be the positive solution of \eqref{eqofvarphi} given in Lemma \ref{lm:existence varphi}. From \eqref{eq_varphi_vaphi_x^2} and $\varphi_x<0$ for all $x>0$, we have
\begin{equation}\label{eqofvarphi2}
\varphi_x=-\varphi\sqrt{a+\frac{c}{4}\varphi^2-\frac{1}{16}\varphi^4-\frac{2b}{p+2}\varphi^p},\quad\forall x>0.
\end{equation}
Since $\varphi$ is even and decreasing on $(0,\infty)$, $\varphi$ attains its maximum at zero. \\
When $b=0$, it is proved that there exists a unique positive, even solution $\varphi_0$ of \eqref{eqofvarphi2}. Moreover,
\begin{equation}\label{eqofvarphi0}
\varphi_0^2(x)=
\begin{cases}
\frac{8a}{\sqrt{c^2+4a}\cosh(\sqrt{4a}x)-c},\quad\text{ if } c^2<4\omega,\\
\frac{4c}{(cx)^2+1},\quad\text{ if } c=2\sqrt{\omega}.
\end{cases}
\end{equation}
Thus, if $c^2<4\omega$ then
\[
\varphi_0(x)\approx e^{-\sqrt{a} |x|},\quad \forall x\in\R.
\]
For convenience, we define
\begin{align*}
D(\varphi)&=a+\frac{c}{4}\varphi^2-\frac{1}{16}\varphi^4-\frac{2b}{p+2}\varphi^p.
\end{align*}
When $b>0$, there is no explicit formula of $\varphi$. However, we prove that $\varphi$ admits a similar asymptotic behavior as $\varphi_0$. More precisely, $\varphi$ exhibits the same asymptotic behavior as the solution to the linear ODE
$$\varphi_x=-\sqrt{a} \varphi.$$
Although the proof could be extended to a more general function $D$, doing so falls outside the scope of this paper. 

\begin{lemma}\label{eq_behavior_of_varphi}
For all $x>0$, we have
\[
\varphi(x)\approx e^{-\sqrt{a} x},
\]
i.e there exist $C_1,C_2>0$ such that
\begin{equation}
\label{eq:ass_beha_varphi}
C_1e^{-\sqrt{a} x}\le \varphi(x)\le C_2e^{-\sqrt{a} x},\quad\forall x>0.
\end{equation}
\end{lemma}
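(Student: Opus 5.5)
Here $a>0$ is assumed, since for $a=0$ the claimed bounds $C_1\le\varphi\le C_2$ would contradict $\varphi\to0$. The plan is to work with $\log\varphi$ and compare its derivative to $-\sqrt a$, using \eqref{eqofvarphi2}. For $x>0$ we have $\varphi>0$ and
\[
\frac{d}{dx}\log\varphi(x)=-\sqrt{D(\varphi(x))},
\]
so
\[
\log\varphi(x)+\sqrt a\,x=\log\varphi(0)+\int_0^x\bigl(\sqrt a-\sqrt{D(\varphi(y))}\bigr)\,dy .
\]
Both bounds in \eqref{eq:ass_beha_varphi} therefore follow once the integrand is shown to be absolutely integrable on $(0,\infty)$. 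Indeed, the right-hand side is then bounded above and below uniformly in $x$, and exponentiating gives $C_1,C_2$. Recall that $\varphi(0)=k>0$.

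First I bound the integrand. Since $D(0)=a>0$ and $D(\varphi)=-H(\varphi)$ with $H<0$ on $(0,k)$, we have $D(\varphi(y))>0$ for $y>0$. Then
\[
\bigl|\sqrt a-\sqrt{D(\varphi)}\bigr|=\frac{|a-D(\varphi)|}{\sqrt a+\sqrt{D(\varphi)}}\le \frac{1}{\sqrt a}\Bigl(\frac{|c|}{4}\varphi^2+\frac1{16}\varphi^4+\frac{2b}{p+2}\varphi^p\Bigr)\le C(k)\,\varphi(y)^2,
\]
because $0<\varphi\le k$ and $p\ge4\ge2$. It therefore suffices to prove $\int_0^\infty\varphi^2\,dy<\infty$ with some exponential decay.

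Second, I obtain a preliminary exponential decay rate. Because $\varphi$ is decreasing on $\R^+$ and tends to $0$, there is $x_1>0$ such that $D(\varphi(y))\ge a/4$ for $y\ge x_1$. On that range $(\log\varphi)'\le-\sqrt a/2$, hence $\varphi(y)\le \varphi(x_1)e^{-\sqrt a(y-x_1)/2}$. Thus $\varphi^2$ decays exponentially, and combined with continuity on $[0,x_1]$ this gives $\int_0^\infty C(k)\varphi^2\,dy<\infty$.

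Combining the two steps, $\int_0^\infty|\sqrt a-\sqrt{D(\varphi)}|\,dy=:L<\infty$. This gives
\[
ke^{-L}e^{-\sqrt a x}\le\varphi(x)\le ke^{L}e^{-\sqrt a x}.
\]
In fact the upper bound is immediate without $L$: $D(\varphi)\le a$ would require $c\le0$, and in general the bound with $L$ handles both signs of $c$.

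The main obstacle is the bootstrap: the crude rate $\sqrt a/2$ must be upgraded to the sharp rate $\sqrt a$. The integrability argument above does exactly this, and it only needs $a>0$, since the comparison constant $1/\sqrt a$ must be finite.
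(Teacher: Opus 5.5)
Your proof is correct and uses the same mechanism as the paper. You compare $(\log\varphi)'=-\sqrt{D(\varphi)}$ with $-\sqrt a$, obtain the crude rate $\sqrt a/2$, and then show the deviation is integrable. What differs is the packaging. The paper works only for $x>M$, using the two-sided comparison $(\sqrt a-\varphi)^2<D(\varphi)<(\sqrt a+c_0\varphi)^2$. Its lower bound comes from integrating $\partial_x[\ln(c_0\varphi)-\ln(\sqrt a+c_0\varphi)]>-\sqrt a$ exactly. Its upper bound comes from a separate bootstrap on $(\ln\varphi)'<-\sqrt a+\varphi$, and the interval $[0,M]$ is left implicitly to continuity. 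Your single identity $\log\varphi(x)+\sqrt a\,x=\log k+\int_0^x\bigl(\sqrt a-\sqrt{D(\varphi)}\bigr)\,dy$ has its integrand bounded by $C(k)\varphi^2$ on all of $(0,\infty)$. It gives both bounds at once, with explicit constants $ke^{\pm L}$, and it avoids the paper's choice $c_0=\sqrt{|c|}/2$, which degenerates at $c=0$. (If you take $\varphi\in H^1$ as known, $\int_0^\infty\varphi^2\,dy<\infty$ is immediate and your crude-rate step is not even needed.)

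There is one slip in your closing aside. The condition $D(\varphi)\le a$, which holds when $c\le 0$, gives $(\log\varphi)'\ge-\sqrt a$. That makes the \emph{lower} bound $\varphi(x)\ge ke^{-\sqrt a x}$ immediate, not the upper one. Your main argument does not use this remark, so nothing else is affected.
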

\begin{proof}
Since $\varphi\xrightarrow[x\rightarrow\pm\infty]{} 0$, for $x>0$ large enough, we have 
\begin{equation}
\label{eq:estimate_of_nonlinear term_ODE}
(\sqrt{a}-\varphi)^2<D(\varphi)=a+\frac{c}{4}\varphi^2-\frac{1}{16}\varphi^4-\frac{2b}{p+2}\varphi^p<(\sqrt{a}+c_0 \varphi)^2,
\end{equation} 
where $c_0=\frac{\sqrt{|c|}}{2}$. Combining to \eqref{eqofvarphi2}, we have, for $x$ large enough,
\begin{equation}\label{eq_estimate_of_varphi_x}
-\varphi(\sqrt{a}-\varphi)>\varphi_x>-(\sqrt{a}+c_0\varphi)\varphi.
\end{equation}
This implies that
\[
\partial_x[\text{ln}(c_0\varphi)-\text{ln}(\sqrt{a}+c_0\varphi)]>-\sqrt{a}.
\]
Thus, for $x>M$ large enough, by integrating both sides from $M$ to $x$, we get that
\[
\varphi \approx \frac{c_0\varphi}{\sqrt{a}+c_0\varphi}>e^{-\sqrt{a}(x-M)+C(M)}.
\]
Hence, there exists $C_1>0$ such that
\begin{equation}\label{eq_prove_estimate_varphi_1}
\varphi(x)>C_1e^{-\sqrt{a}x}
\end{equation}
Moreover, by \eqref{eq_estimate_of_varphi_x}, we have, for $x>M$ large enough,
\begin{equation}\label{eq_estimate ln_varphi}
\frac{d}{dx}\ln \varphi<-\sqrt{a}+\varphi<\frac{-\sqrt{a}}{2}.
\end{equation}
It implies that 
\[
\ln\varphi (x)-\ln\varphi(M)<\frac{-\sqrt{a}}{2} (x-M).
\]
It is equivalent to 
\[
\varphi(x)<C(M)e^{-\frac{\sqrt{a}}{2}x},
\]
for $x>M$ and $C(M)$ denotes a positive constant depending on $M$. Combining to \eqref{eq_estimate ln_varphi}, we have
\begin{align*}
\ln\varphi(x)-\ln\varphi(M) &<-\sqrt{a}(x-M)+C(M)\int_M^x e^{-\frac{\sqrt{a}}{2}y}dy<-\sqrt{a}(x-M)+2C(M)\frac{e^{-\frac{\sqrt{a}}{2}M}-e^{-\frac{\sqrt{a}}{2}x}}{\sqrt{a}}\\
&<-\sqrt{a}(x-M)+C(M).
\end{align*}
Hence, for $x>M$, 
\[
\varphi(x)<C(M)e^{-\sqrt{a}x}.
\]
Combining to \eqref{eq_prove_estimate_varphi_1}, the proof is complete.
\end{proof}

Next, we compare $\varphi$ and $\varphi_0$, which is necessary for us to obtain a more explicit stability result for $\varphi$. We have the following result.
\begin{lemma}\label{lm2.5}
The function $\varphi$ satisfies the following properties:
\begin{equation}\label{eq_need to prove lm}
\norm{\varphi-\varphi_0}_{L^{\infty}}=O(\sqrt[4]{b}),\quad \norm{\varphi-\varphi_0}_{L^2}=O(\sqrt[4]{b}).
\end{equation}
Moreover, 
\begin{equation}
\label{eq_need_prove2}
\norm{\partial_{\omega}\varphi-\partial_{\omega}\varphi_0}_{L^2}=O(\sqrt[4]{b}),
\end{equation}
and
\begin{equation}
\label{eq_need_prove3}
\norm{\partial_c\varphi-\partial_c\varphi_0}_{L^2}=O(\sqrt[4]{b}).
\end{equation}
\end{lemma}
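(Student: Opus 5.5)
The plan is to use the first-order ODE \eqref{eqofvarphi2} to write $\varphi$ in implicit integral form and compare it with $\varphi_0$ through their inverse functions. Both $\varphi$ and $\varphi_0$ are even and strictly decreasing on $\R^+$, with $\varphi(0)=k_b$ and $\varphi_0(0)=k_0$, where $k_b$ and $k_0$ are the unique positive roots of $H$ for $b>0$ and $b=0$. Separating variables in \eqref{eqofvarphi2} gives, for $0<s\le k_b$,
\[
x=\int_{s}^{k_b}\frac{dt}{t\sqrt{D_b(t)}}\quad\text{where } s=\varphi(x),
\]
and similarly for $\varphi_0$ with $D_0(t)=a+\frac{c}{4}t^2-\frac{1}{16}t^4$. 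This is the implicit formula \eqref{eq_varphi_formula} referred to later in the paper. Because $D_0$ has a simple root at $k_0$ (its $t$-derivative there is nonzero since $g(k)>0$), the implicit function theorem applied to $H(t;b)=0$ gives $|k_b-k_0|=O(b)$.

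\textbf{Step 1: $L^\infty$ bound.} I would split $\R^+$ into a region near the top, where $\varphi,\varphi_0$ are close to $k$, and the tail. Near the top, $D_b(t)\approx C(k_b-t)$ vanishes like a square root, and the integrand behaves like $(k_b-t)^{-1/2}$. Comparing $\int_s^{k_b}\frac{dt}{t\sqrt{D_b}}$ with $\int_s^{k_0}\frac{dt}{t\sqrt{D_0}}$ therefore produces errors of size $\sqrt{|k_b-k_0|}$ from the endpoints, together with $\int \bigl|D_b^{-1/2}-D_0^{-1/2}\bigr|$, which is controlled by $b\int (k-t)^{-3/2}$ cut off at distance $\delta$ from the root. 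Optimizing the cutoff $\delta$ against the direct contribution $\sqrt{\delta}$ should yield an inverse-function discrepancy $|x_b(s)-x_0(s)|=O(b^{1/4})$, and I expect this balancing is exactly where the exponent $1/4$ arises. Since $|\partial_x\varphi_0|$ is bounded, this converts into $|\varphi(x)-\varphi_0(x)|=O(b^{1/4})$ uniformly in $x$.

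\textbf{Step 2: $L^2$ bound.} In the tail, where $t\le\delta_0$ is small, $D_b$ and $D_0$ are both bounded below by roughly $a$. There the difference $\ln\varphi-\ln\varphi_0$ satisfies a linear ODE with forcing $O(b\varphi^p)$, so the relative error $|\varphi-\varphi_0|/\varphi_0$ remains $O(b^{1/4})$. Lemma \ref{eq_behavior_of_varphi}, with constants $C_1,C_2$ uniform for $b$ small (the proof's $M$ can be chosen uniformly), then gives $|\varphi-\varphi_0|\le Cb^{1/4}e^{-\sqrt a|x|}$. This is square integrable with norm $O(b^{1/4})$, and combining it with the compact-region $L^\infty$ bound yields the $L^2$ estimate in \eqref{eq_need to prove lm}.

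\textbf{Step 3: derivatives in $\omega$ and $c$.} Differentiating the implicit formula in $\omega$ (note $a$ depends on $\omega$ and $c$) gives
\[
\partial_\omega\varphi(x)\,\frac{1}{\varphi\sqrt{D_b(\varphi)}}=-\partial_\omega k_b\,(\cdots)+\frac12\int_{\varphi}^{k_b}\frac{dt}{t\,D_b(t)^{3/2}},
\]
where the boundary term must be handled by first substituting $t=k_b\tau$ to remove the singular endpoint. An alternative is to work with the linearized equation: $\partial_\omega\varphi$ solves $L_b\partial_\omega\varphi=-\varphi$, where $L_b=-\partial_x^2+a+\frac{3c}{2}\varphi^2-\frac{15}{16}\varphi^4-b(p+1)\varphi^p$, restricted to even functions. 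On even functions $L_0$ is invertible, since its kernel is spanned by the odd function $\varphi_0'$. Then $L_b-L_0=O(b^{1/4})$ in operator norm by Step 1, so a Neumann-series or resolvent perturbation argument gives $\|L_b^{-1}-L_0^{-1}\|=O(b^{1/4})$, and hence \eqref{eq_need_prove2}. The $c$-derivative is handled the same way, with the right-hand side $-\partial_c$ of the coefficients applied to $\varphi$, which gives \eqref{eq_need_prove3}. I prefer this route for Step 3. The main obstacle is Step 1: the square-root singularity at the turning point $k$ must be controlled uniformly, and the shift of the root must be tracked carefully, which is what forces the weak rate $\sqrt[4]{b}$.
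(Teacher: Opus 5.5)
Your proposal is correct. Your Step 3 is the paper's argument. The paper inverts $L_0$ on $\mathrm{Span}\{\partial_x\varphi_0\}^{\perp}$, which contains the even functions, bounds the inverse by the spectral gap, and absorbs $(L_0-L_b)(\partial_\omega\varphi-\partial_\omega\varphi_0)$ into the left-hand side. That is your Neumann-series argument unrolled once. Your version has a mild advantage: invertibility of $L_b$ on even functions, together with the implicit function theorem, also justifies $\partial_\omega\varphi\in L^2$, which the paper's absorption step takes for granted.

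The $L^\infty$ and $L^2$ bounds are where you genuinely differ. The paper passes to $y=\varphi^2$ (its $k_0,k_b$ are the peaks of $y$), divides the perturbed equation by $y\sqrt{F(y)}$, and integrates using the explicit $\mathrm{arcosh}$ antiderivative of the unperturbed equation. This gives
\[
\varphi^2(x)=\frac{8a}{\sqrt{c^2+4a}\,\cosh\bigl(2\sqrt{a}\,x+\theta(x)\bigr)-c},\qquad \theta(x)=\mathrm{arcosh}(w(0))-2\sqrt{a}\,F_1(x),
\]
so $\varphi$ is $\varphi_0$ with an $x$-dependent phase shift. The paper bounds $F_1=O(\sqrt b)$ in the time variable, using $k_0-y(t)\ge Ab+Bt^2$ and $b\int_0^T(Ab+Bt^2)^{-1}\,dt=O(\sqrt b)$. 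It then reads off $|\varphi^2-\varphi_0^2|\lesssim\sqrt b/\cosh(2\sqrt a\,x)$, which gives both norms at once.

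Your hitting-time discrepancy $x_b(s)-x_0(s)$ is the same phase shift, measured in the amplitude variable:
\begin{itemize}
\item your endpoint term $\sqrt{|k_b-k_0|}$ is the paper's $\mathrm{arcosh}(w(0))$ (really $O(\sqrt b)$, not $O(b)$ as written there, harmlessly);
\item your cutoff balance is the paper's $\int(Ab+Bt^2)^{-1}$ computation after changing variables from amplitude to time near the turning point;
\item the uniform bound $D_b(t)\gtrsim k_b-t$ you need is what the paper proves via $F_b'<-\lambda/2$ near the peak.
\end{itemize}
Your route does not use the closed-form antiderivative, so it would extend to a general $D$. The paper's route gives the exponential decay of the error for free.

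Two remarks on your write-up. First, the balance $b\delta^{-1/2}+\sqrt\delta$ is optimized at $\delta=b$ and gives $\sup_s|x_b(s)-x_0(s)|=O(\sqrt b)$, not $O(b^{1/4})$. The fourth root in the lemma does not come from the turning point. It comes from the lossy step $|\varphi-\varphi_0|^2\le|\varphi^2-\varphi_0^2|=O(\sqrt b)$ in the paper, so your estimate is stronger than needed.

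Second, your Step 2 can be skipped. That step uses a Gr\"onwall argument for $\ln\varphi-\ln\varphi_0$ (whose equation is not actually linear in the unknown) plus uniform decay constants. Instead, with $s=\varphi(x)$ you have $\varphi(x)-\varphi_0(x)=\varphi_0(x_0(s))-\varphi_0(x_b(s))$. The mean value theorem gives a point $\xi\ge x-C\sqrt b$ with $|\varphi(x)-\varphi_0(x)|\le|\varphi_0'(\xi)|\,|x_0(s)-x_b(s)|\lesssim\sqrt b\,e^{-\sqrt a\,|x|}$, since $|\varphi_0'|\lesssim e^{-\sqrt a|\cdot|}$. The $L^2$ bound is then immediate.
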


\begin{proof}
Throughout the proof, $O(b)$ denotes any function uniformly bounded by $Cb$, where $C>0$ is a constant independent of $b$. We divide the proof in several steps.

\textbf{Step 1: Formulating the profile $\varphi$.}\\ 
Let $y=\varphi^2$. As $\varphi$ is an even function, so is $y$. Multiply both sides of \eqref{eqofvarphi2} by $\varphi$, we have
\begin{equation}\label{eq_of_y}
y'=-y\sqrt{4a+cy-\frac{1}{4}y^2-\frac{8b}{p+2}y^{p/2}}.
\end{equation}
For convenience, let 
\[
F(y)=4a+cy-\frac{1}{4}y^2,\quad F_b(y)=F(y)-\frac{8b}{p+2}y^{p/2}.
\]
We have $$F(y)=\frac{1}{4}(y_+ - y)(y-y_{-}),$$ with $y_{\pm}=2c\pm 2\sqrt{c^2+4a}$. It is easy to verify that $y_+>0>y_{-}$. Moreover, $F(y(x))>F_b(y(x))\ge 0$ for all $x\in\R$. Thus, $y_+ > y(x)> 0$ for all $x\in\R$. In particular, $y_+>y(0)$. It implies that $F(y(x))>\frac{1}{4}(y_+ - y(0))(-y_{-})>0$ for all $x\in\R$. 

Let $k_b$ be the maximal value of $y$ on $\R$ i.e $k_b=y(0)$. When $b=0$, let $y_0=\varphi_0^2$ and the maximal value of $y_0$ on $\R$
\[
k_0=y_0(0)=y_+,
\]  
We have
\[
F_b(k_0)=F(k_0)-\frac{8b}{p+2}k_0^{p/2}=-\frac{8b}{p+2}k_0^{p/2}<0.
\]
Moreover, $F_b(t)>0$ for all $t\in (0,k_b)$. This implies that $k_0>k_b$. 

Since $k_b$ is the first positive root of $F_b$, we have
\[
0=F_b(k_b)=F(k_b)-\frac{8b}{p+2}k_b^{p/2}= \frac{1}{4}(y_+ - k_b)(k_b-y_{-})-\frac{8b}{p+2}k_b^{p/2}>\frac{1}{4}(k_0-k_b)(-y_{-})-\frac{8b}{p+2}k_0^{p/2}.
\]
This implies that
\begin{equation}
\label{eq:estimate k_0-k_b}
0<k_0-k_b<\frac{4}{|y_{-}|}\frac{8b}{p+2}k_0^{p/2}=O(b),
\end{equation}
%and for $b>0$ small enough,
%\begin{align*}
%F(y)&>\frac{1}{4}(y_+ - y(0))(-y_{-})=\frac{1}{4}(y_{+}-k_b)(-y_{-})\\
%&=\frac{8b}{p+2}k_b^{p/2}\frac{-y_{-}}{k_b-y_{-}}>\frac{8b}{p+2}(k_0/2)^{p/2}\frac{-y_{-}}{k_0-y_{-}}.
%\end{align*}

From \eqref{eq_of_y}, we have
\begin{equation}\label{eq_of_y_2}
y'=-y\sqrt{F_b(y)}=-y\sqrt{F(y)}+\frac{8b}{(p+2)(\sqrt{F(y)}+\sqrt{F_b(y)})} y^{1+p/2}.
\end{equation}

Thus, dividing both sides by $y\sqrt{F(y)}$ and integrating from $0$ to $x$, we obtain
\begin{equation}\label{eq_varphi_o(b)}
\int_0^x \frac{y'}{y\sqrt{F(y)}} dt= \int_0^x -1+\frac{8b y^{p/2}}{(p+2)\sqrt{F(y)}(\sqrt{F(y)}+\sqrt{F_b(y)})}dt,\quad \forall x>0.
\end{equation}
By changing variable $y=\frac{y_+}{1+z}$, we have
\begin{align}
\int \frac{y'}{y\sqrt{F(y)}} dt &=2\int \frac{dy}{y\sqrt{(y_+ - y)(y-y_{-})}}\nonumber\\
&=-2\int \frac{dz}{\sqrt{y_+ z (y_+ - y_{-}- y_{-} z)}}\nonumber\\
&=-2\int \frac{dz}{\sqrt{16a((z+C_0)^2-C_0^2)}},\quad (\text{ with } 32aC_0=y_+(y_+ -y_{-})),\nonumber\\
&=\frac{-1}{2\sqrt{a}}\int \frac{d\hat{z}}{\sqrt{\hat{z}^2-C_0^2}},\quad (\text{ with } \hat{z}=z+C_0),\nonumber\\
&=\frac{-1}{2\sqrt{a}}\int \frac{dw}{\sqrt{w^2-1}},\quad (\text{ with } \hat{z}=C_0w),\nonumber\\
&=\frac{-1}{2\sqrt{a}}\text{arcosh}(w).\label{eq_arcoshw}
\end{align}
Note that from inequality $y_+> y(x)> y_{-}$, it follows that $$w(x)=\frac{y_+}{C_0 y(x)}+\frac{C_0-1}{C_0}>1,\quad \text{ for all } x\in\R.$$ 
Therefore, the function $\text{arcosh}(w)$ is well-defined on $\R$. \\
Combining \eqref{eq_arcoshw} with \eqref{eq_varphi_o(b)} and $y=\varphi^2$, we have 
\[
\frac{-1}{2\sqrt{a}}\left(\text{arcosh}\left(\frac{y_+}{C_0\varphi^2(x)}+\frac{C_0-1}{C_0}\right)-\text{arcosh}(w(0))\right)=-x+F_1(x),
\]
where,
\[
F_1(x)=\int_0^x \frac{8by^{p/2}}{(p+2)\sqrt{F(y)}(\sqrt{F(y)}+\sqrt{F_b(y)})}dt>0, \quad \forall x>0.
\]
This implies that
\[
\varphi^2(x)=\frac{1}{\frac{C_0}{y_+}\cosh(2\sqrt{a}x+\text{arcosh}(w(0))-2\sqrt{a}F_1(x))-\frac{C_0-1}{y_+}}.
\]
Substituting $C_0=\frac{y_+(y_+ - y_{-})}{32a}$, $y_{\pm}=2c\pm 2\sqrt{c^2+4a}$ into the above expression, we get that
\begin{equation}\label{eq_varphi_formula}
\varphi^2(x)=\frac{8a}{\sqrt{c^2+4a} \cosh(2\sqrt{a}x+\text{arcosh}(w(0))-2\sqrt{a}F_1(x))-c}.
\end{equation}

\textbf{Step 2:Estimate $F_1$.}\\
Since $y$ is an even function, $F_1$ is odd. Thus, it suffices to estimate $F_1$ on $(0,\infty)$. Given two functions $f,g$, we write $f\approx g$ if there exist two positive constants $C_1,C_2$ such that $C_1f(x)\leq g(x)\leq C_2f(x)$ for all $x\in\R$. Noting that $F(y)\approx (k_0 -y)(y-y_{-})\approx k_0-y$, with the implicit constants are independent of $b$, we have
\begin{equation}
\label{eq:approx F_1}
F_1(x)\approx b\int_0^x \frac{y^{p/2}}{F(y)}dt  \approx b\int_0^x\frac{y^{p/2}}{k_0-y}dt.
\end{equation}
From Lemma \ref{eq_behavior_of_varphi}, when $t>T$ (with $T$ is large enough, does not depend on $b$), $y(t)\approx 0$ and 
\[
\frac{y^{p/2}}{k_0-y}\approx y^{p/2}.
\] 
Thus, 
\[
b\int_T^{\infty}\frac{y^{p/2}}{k_0-y}dt=O(b),
\]
so
\[
\sup_{x\in\R}|F_1(x)| \leq O(b)+b\int_0^T \frac{y^{p/2}}{k_0-y} dt.
\]
We prove that 
\begin{equation}
\label{eq:k_0-y}
k_0-y(t) \geq Ab+Bt^2, \quad \forall t\in [0,T].
\end{equation}
We have
\begin{equation*}
k_0-y(t)=k_0-k_b+y(0)-y(t)\geq Ab + y(0)-y(t).
\end{equation*}
Hence, \eqref{eq:k_0-y} reduces to
\begin{equation}
\label{eq:y(0)_y(t)}
y(t) \leq y(0)- Bt^2, \quad \forall t\in [0,T].
\end{equation}
We have
\[
F'(y_+) = \frac{1}{4}(y_{-}-y_{+})=-\lambda<0.
\]
Then, there exists a sufficiently small $\delta > 0$ that is independent of $b$ such that
\[
F'(y)<\frac{-3\lambda}{4}, \quad \forall y\in [k_0-\delta,k_0].
\]
Therefore, $\forall y\in [k_0-\delta,k_b]$, for sufficiently small $b$,
\[
F_b'(y)<F'(y)+|F_b'(y)-F'(y)|\leq -\frac{3\lambda}{4}+ \frac{4pb}{p+2}k_0^{\frac{p}{2}-1}<-\frac{\lambda}{2}.
\]
Let $t_0>0$ be the unique positive number such that $y(t_0)=k_0-\delta$. It follows that $k_b\geq y(t)\geq k_0-\delta,\ \forall t\in [0,t_0]$ and $k_0-\delta\geq y(t)> 0,\ \forall t\in [0,\infty)$.\\
 
On $[0,t_0]$, since $F_b' \leq -\frac{\lambda}{2} < 0$, applying Lagrange theorem, we have:
\begin{equation*}
    F_b(y)=F_b(y)-F_b(k_b)=F_b'(\xi)(y-k_b) \geq \frac{\lambda}{2}(k_b - y)
\end{equation*}
Using the differential equation $y' = -y\sqrt{F_b(y)}$, we obtain:
\begin{equation*}
    \frac{-y'}{\sqrt{k_b - y}} \geq (k_0-\delta)\sqrt{\frac{\lambda}{2}}. 
\end{equation*}
Integrating both sides from $0$ to $t$ ($t \le t_0$):
\begin{equation*}
    2\sqrt{k_b - y(t)} \geq (k_0-\delta)\sqrt{\frac{\lambda}{2}} t \implies k_b - y(t) \ge (k_0-\delta)^2\frac{\lambda}{8} t^2
\end{equation*}
\begin{equation*}
    \implies y(t) - y(0) \leq -\frac{\lambda (k_0-\delta)^2}{8} t^2 \quad \forall t \in [0, t_0]
\end{equation*}
For $t > t_0$, the function $y(t)$ drops below the level $y_+ - \delta$. Consequently, the distance from $y(t)$ to the peak $k_b$ is strictly bounded below by a positive constant:
\begin{equation*}
    k_b - y(t) \ge k_b - (y_+ - \delta) \ge \delta - O(b)
\end{equation*}
For a sufficiently small $b$, it follows that $k_b - y(t) \ge \frac{\delta}{2} > 0$.

Furthermore, since $t \in [t_0, T]$, the quantity $t^2$ is bounded above by $T^2$. This allows for the introduction of the factor $t^2$ as follows:
\begin{equation*}
    k_b - y(t) \ge \frac{\delta}{2} = \left(\frac{\delta}{2T^2}\right) T^2 \ge \left(\frac{\delta}{2T^2}\right) t^2
\end{equation*}
Rearranging terms yields the desired inequality:
\begin{equation*}
    y(t) - k_b \le -\left(\frac{\delta}{2T^2}\right) t^2 \quad \forall t \in [t_0, T].
\end{equation*}
Letting $B=\min\left\{\frac{\lambda (k_0-\delta)^2}{8},\frac{\delta}{2T^2}\right\}$, \eqref{eq:y(0)_y(t)} is proved. 

From \eqref{eq:k_0-y} and the upper boundedness of $y(t)$, we have
\begin{equation*}
    \int_0^T \frac{y^{p/2}(t)}{k_0 - y(t)} dt \leq \int_0^T \frac{k_0^{p/2}}{A  b + B t^2} dt
\end{equation*}
Multiplying by the factor $b$ yields:
\begin{equation*}
    L = b \int_0^T \frac{y^{p/2}(t)}{k_0 - y(t)} dt \leq k_0^{p/2}  b \int_0^T \frac{1}{A  b + B t^2} dt
\end{equation*}

%To evaluate the integral on the right-hand side, we employ the substitution:
%\begin{equation*}
%    t = \sqrt{\frac{A b}{B}}  u \implies dt = \sqrt{\frac{A  b}{B}} du
%\end{equation*}
%The integration bounds are transformed as follows:
%\begin{itemize}
%    \item For $t = 0 \implies u = 0$
%    \item For $t = T \implies u = T\sqrt{\frac{B}{A  b}}$
%\end{itemize}
%Substituting these into the integrand leads to:
Using the substitution $t = \sqrt{\frac{A b}{B}} u$, we have
\begin{align*}
    \int_0^T \frac{1}{A  b + B t^2} dt &= \int_0^{T\sqrt{\frac{B}{A  b}}} \frac{1}{A  b + A  b  u^2} \left( \sqrt{\frac{A  b}{B}} du \right) \\
    &= \frac{1}{A  b} \sqrt{\frac{A  b}{B}} \int_0^{T\sqrt{\frac{B}{A  b}}} \frac{1}{1 + u^2} du = \frac{1}{\sqrt{A B  b}} \left[ \arctan(u) \right]_0^{T\sqrt{\frac{B}{A  b}}} \\
    &= \frac{1}{\sqrt{A B  b}} \arctan\left(T\sqrt{\frac{B}{A  b}}\right)
\end{align*}
Plugging this result back into the upper bound of $L$, we obtain:
\begin{align*}
    L &\leq k_0^{p/2}  b  \left( \frac{1}{\sqrt{A B \cdot b}} \arctan\left(T\sqrt{\frac{B}{A  b}}\right) \right) \\
      &\leq \frac{k_0^{p/2}}{\sqrt{A B}}  \sqrt{b}  \arctan\left(T\sqrt{\frac{B}{A \cdot b}}\right)\leq \frac{k_0^{p/2}}{\sqrt{A B}}  \sqrt{b} \frac{\pi}{2}=O(\sqrt{b}).
\end{align*}
This implies that 
\[
\sup_{x\in\R}|F_1(x)|\leq O(b)+O(\sqrt{b})=O(\sqrt{b}).
\]
Hence, $F_1=O(\sqrt{b})$.

\textbf{Step 3: Estimate $w(0)$ and prove \eqref{eq_need to prove lm}.}\\

We have $w(0)>1$ and
\[
w(0)=\frac{y_+}{C_0y(0)}+\frac{C_0-1}{C_0}=\frac{k_0}{C_0 k_b}+\frac{C_0-1}{C_0}=1+O(b).
\]
It follows that, for $b$ small enough,
\begin{align*}
\text{arcosh}(w(0))&=\ln (w(0)+\sqrt{w(0)^2-1})=\ln(1+O(b)+\sqrt{(1+O(b))^2-1})\\
&=\ln(1+O(b))=O(b).
\end{align*}
Substituting into \eqref{eq_varphi_formula}, we obtain
\[
\varphi^2(x)=\frac{8a}{\sqrt{c^2+4a} \cosh(2\sqrt{a}x+O(\sqrt{b}))-c}.
\]
We have
\begin{align*}
\cosh(z+O(\sqrt{b}))&=(1+O(\sqrt{b}))\cosh(z),\\
\sqrt{c^2+4a}(1+O(\sqrt{b}))&\ge |c|+\varepsilon,
\end{align*}
for some $\varepsilon>0$ provided that $b$ is sufficiently small. Thus, by letting $z=2\sqrt{a}x$, 
\begin{align*}
|\varphi(x)-\varphi_0(x)|^2 &\le |\varphi^2(x)-\varphi_0^2(x)|=8a\left|\frac{1}{\sqrt{c^2+4a}(1+O(\sqrt{b})) \cosh(z)-c}-\frac{1}{\sqrt{c^2+4a} \cosh(z)-c}\right|\\
&=8aO(b)\sqrt{c^2+4a}\left|\frac{\cosh(z)}{(\sqrt{c^2+4a}(1+O(\sqrt{b})) \cosh(z)-c)(\sqrt{c^2+4a} \cosh(z)-c)}\right|\\
&\le 8aO(\sqrt{b})\sqrt{c^2+4a}\frac{1}{\left((|c|+\varepsilon) \cosh(z)-c\right)(\sqrt{c^2+4a}-c)}\\
&\leq\frac{O(\sqrt{b})}{(|c|+\varepsilon) \cosh(z)-c}.
\end{align*}
This easily implies the first and the second estimates in \eqref{eq_need to prove lm}. It remains to prove \eqref{eq_need_prove2}. \\

\textbf{Step 3: Prove \eqref{eq_need_prove2}.}\\
We have
\begin{equation}\label{eq_of_var_phi}
-\varphi_{xx}+a\varphi+\frac{c}{2}\varphi^3-\frac{3}{16}\varphi^5-b\varphi^{p+1}=0.
\end{equation}
Differentiating both sides with respect to $\omega$ yields
\begin{align*}
0&=\left(-\partial_{xx}+a+\frac{3c}{2}\varphi^2-\frac{15}{16}\varphi^4-b(p+1)\varphi^p\right)\partial_{\omega}\varphi+\varphi=L_b\partial_{\omega}\varphi+\varphi,
\end{align*}
where $$L_b=-\partial_{xx}+a+\frac{3c}{2}\varphi^2-\frac{15}{16}\varphi^4-b(p+1)\varphi^p.$$ 
In particular, by let 
\[
L_0=-\partial_{xx}+a+\frac{3c}{2}\varphi_0^2-\frac{15}{16}\varphi_0^4,
\]
then $L_0\partial_{\omega}\varphi_0=-\varphi_0$. This implies that
\begin{equation}\label{eq:L_0_1}
L_0(\partial_{\omega}\varphi-\partial_{\omega}\varphi_0)=(L_0-L_b)\partial_{\omega}\varphi+L_b\partial_{\omega}\varphi-L_0\partial_{\omega}\varphi_0=(L_0-L_b)\partial_{\omega}\varphi-(\varphi-\varphi_0).
\end{equation}
Furthermore, according to Lemma 3.4 in \cite{LiSiSu13}, the operator $L_0$ possesses exactly one negative eigenvalue, its kernel is given by $\text{Ker}(L_0)=\text{Span}\{\partial_x\varphi_0\}$, and the remainder of its spectrum is bounded below by a positive constant. Since $\partial_x\varphi_0$ is an odd function, whereas $\varphi$, $\varphi_0$, $\partial_{\omega}\varphi$, and $\partial_{\omega}\varphi_0$ are even, these functions all belong to $\text{Ker}(L_0)^{\perp}$. Consequently, both $\partial_{\omega}\varphi-\partial_{\omega}\varphi_0$ and the right-hand side of \eqref{eq:L_0_1} also belong to $\text{Ker}(L_0)^{\perp}$.\\

Let $\hat{L}_0=L_0\mid_{\text{Span}\{\partial_x\varphi_0\}^{\perp}}$. Then $\hat{L}_0$ is a self-adjoint, invertible operator whose inverse $\hat{L}_0^{-1}$ is a bounded linear operator. Moreover , its spectrum satisfies
$$\sigma(\hat{L}_0^{-1}) = \left\{ \frac{1}{\lambda} \ \middle|\ \lambda \in \sigma(\hat{L}_0) \right\}.$$
By Gelfand's formula \ref{lm:Gelfand formula} and Lemma \ref{lm:Gel_for_adjoint}, we have
$$\rho(\hat{L}_0^{-1})=\|\hat{L}_0^{-1}\|_{L^2}.$$
Furthermore, since the spectrum of $\hat{L}_0$ is bounded away from zero, $\rho(\hat{L}_0^{-1})$ can be explicitly evaluated as
\[
\rho(\hat{L}_0^{-1})=\sup_{\lambda\in \sigma(\hat{L}_0^{-1})}|\lambda|=\sup_{\lambda\in \sigma(\hat{L}_0)}\frac{1}{|\lambda|}=\frac{1}{\inf_{\lambda\in \sigma(\hat{L}_0)}|\lambda|}=\delta>0.
\]
Hence, from \eqref{eq:L_0_1} and \eqref{eq_need to prove lm}, we have
\begin{align*}
\norm{\partial_{\omega}\varphi-\partial_{\omega}\varphi_0}_{L^2}&=\norm{\hat{L}_0^{-1}\left[(L_0-L_b)\partial_{\omega}\varphi-(\varphi-\varphi_0)\right]}_{L^2}\leq \delta \norm{(L_0-L_b)\partial_{\omega}\varphi-(\varphi-\varphi_0)}_{L^2}\\
&\leq\delta \|(L_0-L_b)(\partial_{\omega}\varphi-\partial_{\omega}\varphi_0)\|_{L^2}+\delta\|(L_0-L_b)\partial_{\omega}\varphi_0\|_{L^2}+\delta\|\varphi-\varphi_0\|_{L^2}\\
&\leq \delta\left\|\frac{3c}{2}(\varphi_0^2-\varphi^2)-\frac{15}{16}(\varphi_0^4-\varphi^4)+b(p+1)\varphi^p\right\|_{L^{\infty}}\left(\norm{\partial_{\omega}\varphi-\partial_{\omega}\varphi_0}_{L^2}+\norm{\partial_{\omega}\varphi_0}_{L^2}\right)+\delta\|\varphi-\varphi_0\|_{L^2}\\
&\leq O(\sqrt[4]{b})\norm{\partial_{\omega}\varphi-\partial_{\omega}\varphi_0}_{L^2}+O(\sqrt[4]{b}).
\end{align*}
This implies that, for $b$ sufficiently small,
\[
\norm{\partial_{\omega}\varphi-\partial_{\omega}\varphi_0}_{L^2}=O(\sqrt[4]{b}).
\]
This proves \eqref{eq_need_prove2}. It remains to prove \eqref{eq_need_prove3}. \\
Differentiating both sides of \eqref{eq_of_var_phi} with respect to $c$, we obtain
\[
0=\left(-\partial_{xx}+a+\frac{3c}{2}\varphi^2-\frac{15}{16}\varphi^4-b(p+1)\varphi^p\right)\partial_c\varphi+\left(-\frac{c}{2}\varphi+\frac{1}{2}\varphi^3\right)=L_b\partial_c\varphi-\frac{c}{2}\varphi+\frac{1}{2}\varphi^3.
\]
In particular, $L_0\partial_c\varphi_0-\frac{c}{2}\varphi_0+\varphi_0^3=0$. \\
Therefore,
\[
L_0(\partial_c\varphi-\partial_c\varphi_0)=(L_0-L_b)\partial_c\varphi+L_b\partial_c\varphi-L_0\partial_c\varphi_0=(L_0-L_b)\partial_c\varphi+\frac{c}{2}(\varphi-\varphi_0)-(\varphi^3-\varphi_0^3).
\]
This implies \eqref{eq_need_prove3} holds by similar argument to prove \eqref{eq_need_prove2}. The proof is complete.

\end{proof}

\subsection{Asymptotic behavior of soliton profile in borderline case}

Recall that when $b=0$ and $c=2\sqrt{\omega}$ we have
\[
\varphi_0^2(x)=\frac{4c}{(cx)^2+1}.
\]
Inspired by the above formulation, we investigate the behavior of solitons to \eqref{eq1} under the conditions $b\neq 0$, $c=2\sqrt{\omega}$ (hence, $a=0$), and $p>4$. We have the following results.

\begin{lemma}
\label{lm:behave_varphi_zero mass}
As $|x|\rightarrow\infty$, $\varphi$ decays to zero at a polynomial rate. More precisely,
\[
\varphi(x) \approx (1+|x|)^{-1}.
\] 
\end{lemma}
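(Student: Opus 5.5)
In the borderline case $a=0$, $c=2\sqrt{\omega}>0$, the first-integral identity \eqref{eq_varphi_vaphi_x^2} together with $\varphi_x<0$ on $\R^+$ (Lemma \ref{eq:soliton profile all}) gives, exactly as in \eqref{eqofvarphi2},
\[
\varphi_x=-\varphi^2\sqrt{\frac{c}{4}-\frac{1}{16}\varphi^2-\frac{2b}{p+2}\varphi^{p-2}},\qquad x>0.
\]
Since $\varphi$ is even, it suffices to treat $x\geq 0$. The plan is to compare this ODE with the model equation $\psi_x=-\frac{\sqrt{c}}{2}\psi^2$, whose solutions behave like $(1+x)^{-1}$. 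This mirrors the proof of Lemma \ref{eq_behavior_of_varphi}, with $\ln\varphi$ replaced by $1/\varphi$.

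First, note that $\varphi$ is continuous, positive, and bounded on $[0,M]$ for any $M$, with $\varphi\le\varphi(0)=k$. Hence $\varphi(x)\approx(1+x)^{-1}$ holds on any compact interval $[0,M]$, with constants depending on $M$. Because $\varphi(x)\to0$ as $x\to\infty$ and $p-2\ge 2>0$, we can choose $M$ large enough that for $x>M$
\[
\frac{c}{16}\leq \frac{c}{4}-\frac{1}{16}\varphi^2-\frac{2b}{p+2}\varphi^{p-2}\leq \frac{c}{4}.
\]
This uses $c>0$, which is essential. We therefore obtain, for $x>M$,
\[
\frac{\sqrt{c}}{4}\varphi^2\leq -\varphi_x\leq \frac{\sqrt{c}}{2}\varphi^2 .
\]

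Next, we rewrite this in terms of $1/\varphi$:
\[
\frac{\sqrt{c}}{4}\le \frac{d}{dx}\left(\frac{1}{\varphi}\right)\le\frac{\sqrt{c}}{2}.
\]
Integrating from $M$ to $x$ gives
\[
\frac{1}{\varphi(M)}+\frac{\sqrt{c}}{4}(x-M)\le \frac{1}{\varphi(x)}\le \frac{1}{\varphi(M)}+\frac{\sqrt{c}}{2}(x-M).
\]
Both bounds are comparable to $1+x$ for $x>M$, with constants depending on $M$ and $c$. Combining this with the compact-interval bound and with evenness yields $\varphi(x)\approx(1+|x|)^{-1}$ on $\R$.

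The only delicate point is the lower bound on the square-root factor. Since $a=0$, there is no exponential rate, and the decay is governed by the quartic coefficient $\frac{c}{4}$. We therefore need $\varphi\to0$, which holds since $\varphi\in H^1$, to make the perturbative terms $\frac{1}{16}\varphi^2$ and $\frac{2b}{p+2}\varphi^{p-2}$ smaller than, say, $\frac{3c}{16}$. No uniformity in $b$ is claimed here, so this step poses no real obstacle. If uniformity in small $b$ were needed later, as in Lemma \ref{lm:behave_solitons_zero mass}, one would additionally use $\varphi\le k\le k_0$ and choose $M$ independent of $b$.
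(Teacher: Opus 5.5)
Your proof is correct and takes the same route as the paper: from \eqref{eqofvarphi2} with $a=0$ you get $-\varphi_x\approx\varphi^2$ for large $x$, hence $\partial_x(1/\varphi)\approx 1$, and integrating gives the polynomial rate. You simply make the constants explicit ($\frac{\sqrt{c}}{4}\le\partial_x(1/\varphi)\le\frac{\sqrt{c}}{2}$ for $x>M$) and also treat the compact interval $[0,M]$, which the paper leaves implicit.
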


\begin{proof}
The proof is similar to the one in \cite[Lemma 2.3]{Tin24}. By evenness of $\varphi$, it suffices to consider $x\in (0,\infty)$. From \eqref{eqofvarphi2}, we have, for $x>0$ large enough,
\[
\varphi_x \approx -\varphi^2,
\]
and hence,  
\[
\partial_x \left(\frac{1}{\varphi}\right)= \frac{-\varphi_x}{\varphi^2}\approx 1.
\]
This implies the desired result. 
\end{proof}

\begin{lemma}
\label{lm:behave_solitons_zero mass}
The soliton profile $\varphi$ changes regularly with respect to perturbation $b|u|^pu$. More precisely, we have
\begin{equation}\label{eq_prove_behave_solitons_zero_mass}
\norm{\varphi-\varphi_0}_{L^{\infty}}=O(\sqrt[4]{b}),\quad \norm{\varphi-\varphi_0}_{L^2}=O(\sqrt[4]{b}).
\end{equation}
\end{lemma}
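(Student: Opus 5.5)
The plan is to reproduce Steps 1--3 of the proof of Lemma \ref{lm2.5} with $a=0$ and $c=2\sqrt{\omega}>0$, replacing the $\cosh$-integration by the rational integration that produces $\varphi_0^2=\frac{4c}{(cx)^2+1}$.

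Set $y=\varphi^2$. By \eqref{eq_of_y} with $a=0$,
\[
y'=-y\sqrt{F_b(y)},\qquad F(y)=cy-\tfrac14 y^2=\tfrac14 y(k_0-y),
\]
with $k_0=4c$ the maximum of $y_0$. Exactly as in \eqref{eq:estimate k_0-k_b}, one gets $0<k_0-k_b=O(b)$; here $k_b=y(0)$, and the factor $(-y_-)$ there is replaced by $k_b\approx k_0$.

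Next, write $y'=-y\sqrt{F(y)}+R$ as in \eqref{eq_of_y_2} and divide by $y\sqrt{F(y)}$. Since
\[
\int\frac{dy}{y\sqrt{F(y)}}=-\frac{4}{k_0}\sqrt{\frac{k_0-y}{y}},
\]
with the sign fixed by differentiation, integrating from $0$ to $x>0$ gives
\[
\frac{4}{k_0}\sqrt{\frac{k_0-y(x)}{y(x)}}=\frac{4}{k_0}\sqrt{\frac{k_0-k_b}{k_b}}+x-F_1(x),
\]
where $F_1$ is defined as before. Writing $s_b=\sqrt{(k_0-k_b)/k_b}=O(\sqrt b)$, this is equivalent to
\[
y(x)=\frac{4c}{1+\bigl(s_b+c(x-F_1(x))\bigr)^2},
\]
which reduces to $\varphi_0^2$ when $b=0$.

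The main obstacle is estimating $F_1$. Near $x=0$ the bound \eqref{eq:k_0-y} carries over verbatim, since it only uses $F'(k_0)=-c<0$, so $\int_0^T$ contributes $O(\sqrt b)$. The tail is the new problem. Now $\sqrt{F(y)}\approx\sqrt y$, so the integrand of $F_1$ behaves like $b\,y^{p/2}/F(y)\approx b\,y^{p/2-1}$. By Lemma \ref{lm:behave_varphi_zero mass}, $y\approx(1+x)^{-2}$, hence the integrand is $\approx b(1+x)^{-(p-2)}$, which is integrable because $p\geq4$. One must check that the implicit constants in Lemma \ref{lm:behave_varphi_zero mass} are uniform in small $b$. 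I would get this directly from $y'\le -y\sqrt{F_b(y)}$, with $F_b(y)\ge cy/2$ for $y\le y_*$ and $y_*$ independent of $b$, which yields $1/\sqrt y$ growing at least linearly, uniformly in $b$. This gives $\sup|F_1|=O(\sqrt b)$.

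Finally, set $u=cx$ and $\eta=s_b-cF_1(x)=O(\sqrt b)$. Then
\[
|\varphi^2-\varphi_0^2|=4c\,\frac{|\eta(2u+\eta)|}{(1+(u+\eta)^2)(1+u^2)}\lesssim\frac{\sqrt b}{1+u^2}.
\]
Using $|\varphi-\varphi_0|^2\le|\varphi^2-\varphi_0^2|$ gives the $L^\infty$ bound $O(\sqrt[4]b)$. Since $(1+u^2)^{-1/2}$ is not in $L^2$, for the $L^2$ bound I would instead use $|\varphi-\varphi_0|=|\varphi^2-\varphi_0^2|/(\varphi+\varphi_0)$ together with $\varphi_0\approx(1+|x|)^{-1}$. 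This gives $|\varphi-\varphi_0|\lesssim\sqrt b\,(1+|x|)^{-1}$, which is in $L^2$, so the $L^2$ norm is $O(\sqrt b)\subset O(\sqrt[4]b)$.
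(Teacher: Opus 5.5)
Your proposal is correct and follows essentially the same route as the paper. The paper works with $\varphi$ and a trigonometric substitution rather than with $y=\varphi^2$, but it arrives at the identical formula $\varphi^2=4c/\bigl(1+(s_b+c(x-F_1(x)))^2\bigr)$. It then bounds $F_1$ by the same $Ab+Bt^2$ argument near the peak plus polynomial decay in the tail. Your explicit check that the decay constants are uniform in $b$ is a point the paper leaves implicit.

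One small correction: $(1+u^2)^{-1/2}$ \emph{is} in $L^2$, since $\int_{\R}(1+u^2)^{-1}\,du=\pi$. So the pointwise bound $|\varphi-\varphi_0|^2\lesssim\sqrt b/(1+u^2)$ already gives the $L^2$ estimate $O(\sqrt[4]{b})$ directly, which is how the paper concludes. Your detour through $|\varphi-\varphi_0|=|\varphi^2-\varphi_0^2|/(\varphi+\varphi_0)$ is also valid and yields the sharper bound $O(\sqrt b)$.
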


\begin{proof}
We follow the lines in the proof of Lemma \ref{lm2.5}. We divide the proof in several steps.
\textbf{Step 1: Formulating the profile $\varphi$.}\\
Since $\varphi$ is an even function, we only need to focus on $[0,\infty)$. By Lemma \ref{eq:soliton profile all}, $\varphi$ is positive and decreasing.\\
From \eqref{eqofvarphi2}, we have
\begin{equation}\label{eq_varphi_zero mass_0}
\varphi'=-\varphi^2\sqrt{\frac{c}{4}-\frac{1}{16}\varphi^2-\frac{2b}{p+2}\varphi^{p-2}},\quad\forall x>0.
\end{equation}
For convenience, let
\[
G(y)=\frac{c}{4}-\frac{1}{16}y^2,\quad G_b(y)=G(y)-\frac{2b}{p+2}y^{p-2}.
\]
We have
\[
\varphi'=-\varphi^2\sqrt{\frac{c}{4}-\frac{1}{16}\varphi^2}+\frac{2b\varphi^p}{(p+2)(\sqrt{G(\varphi)}+\sqrt{G_b(\varphi)})}.
\]
Thus, dividing both sides by $\varphi^2\sqrt{\frac{c}{4}-\frac{1}{16}\varphi^2}$ and integrating from $0$ to $x$, we obtain
\begin{equation}\label{eq_varphi_zero mass}
\int_0^x \frac{\varphi'}{\varphi^2\sqrt{\frac{c}{4}-\frac{1}{16}\varphi^2}}dt=\int_0^x -1+\frac{2b\varphi^{p-2}}{(p+2)\sqrt{G(\varphi)}(\sqrt{G(\varphi)}+\sqrt{G_b(\varphi)})}dt=-x+G_1(x),
\end{equation}
where 
\[
G_1(x)=\int_0^x \frac{2b\varphi^{p-2}}{(p+2)\sqrt{G(\varphi)}(\sqrt{G(\varphi)}+\sqrt{G_b(\varphi)})}dt>0.
\]
Let $h_b=\varphi(0)$ and $X=\varphi(x)$. The left hand side of \eqref{eq_varphi_zero mass} equals to
\begin{equation}\label{eqof_z}
LHS=\int_{h_b}^{X}\frac{dz}{z^2\sqrt{\frac{c}{4}-\frac{1}{16}z^2}}=\int_{h_b}^{X}\frac{4dz}{z^2\sqrt{4c-z^2}}.
\end{equation}
Noting that from \eqref{eq_varphi_zero mass_0}, we have $G(\varphi(0))>G_b(\varphi(0))\geq 0$. This yields $4c>\varphi^2(0)=h_b^2>\varphi^2(x)$ for all $x\in\R^+$. Thus, the integrand in \eqref{eqof_z} is well-defined. \\
By changing variables $z=2\sqrt{c}\sin(t)$, $t\in \left(0,\frac{\pi}{2}\right)$, we have
\begin{align*}
\int \frac{4dz}{z^2\sqrt{4c-z^2}}&=\frac{1}{c}\int \frac{1}{\sin^2(t)}dt=\frac{-1}{c}\cot(t)+C=\frac{-\sqrt{4c-z^2}}{cz}+C.
\end{align*}
Thus,
\[
LHS=\frac{\sqrt{4c-h_b^2}}{ch_b}-\frac{\sqrt{4c-\varphi(x)^2}}{c\varphi(x)}.
\]
Subtitling to \eqref{eq_varphi_zero mass}, we obtain
\[
\varphi^2(x)=\frac{4c}{1+c^2\left(x-G_1(x)+\frac{\sqrt{4c-h_b^2}}{ch_b}\right)^2}.
\]
\textbf{Step 2. Estimate $\frac{\sqrt{4c-h_b^2}}{ch_b}$.} \\
Lemma \ref{eq:soliton profile all} implies that $h_b$ is the first positive root of $G_b$. Moreover, it is clear that $h_0:=\sqrt{4c}>h_b$ is the first positive root of $G$. Therefore,
\[
0=G_b(h_b)=G(h_b)-\frac{2b}{p+2}h_b^{p-2}=\frac{1}{16}(h_0^2-h_b^2)-\frac{2b}{p+2}h_b^{p-2}.
\]
Consequently,
\[
0<h_0-h_b=\frac{32bh_b^{p-2}}{(p+2)(h_0+h_b)}<\frac{32bh_0^{p-2}}{(p+2)h_0}=O(b),
\]
and hence, for sufficiently small $b$,
\begin{equation}
\label{bounded constant by O(b)}
\frac{\sqrt{4c-h_b^2}}{ch_b}=\frac{\sqrt{h_0^2-h_b^2}}{ch_b}<\frac{\sqrt{(h_0-h_b)2h_0}}{ch_0/2}=O(\sqrt{b}).
\end{equation}
\textbf{Step 3. Estimate $G_1$.} We follow the proof of step 2 of Lemma \ref{lm2.5}. Since $G_1$ is an odd function, it suffices to estimate $G_1$ on $\R^+$.\\
We have 
\begin{equation}\label{eq_estimate G_1}
G_1(x) \leq\int_0^x\frac{2b\varphi^{p-2}}{(p+2)G(\varphi)}dt=\int_0^x\frac{32b\varphi^{p-2}}{(p+2)(h_0^2-\varphi^2)}dt\ \leq \frac{32b}{(p+2)h_0}\int_0^{\infty}\frac{\varphi^{p-2}}{h_0-\varphi}dt,
\end{equation}
By Lemma \ref{lm:behave_varphi_zero mass}, there exists a sufficiently large $T>0$, independent of $b$, such that $\varphi(t)<\frac{h_0}{2}$ for all $t>T$. Therefore, from Lemma \ref{lm:behave_varphi_zero mass}, we have
\[
\int_T^{\infty}\frac{\varphi^{p-2}}{h_0-\varphi}dt<\frac{2}{h_0}\int_T^{\infty} t^{2-p}dt\lesssim 1,
\]
where the implicit constant is independent of $b$. Thus, to estimate $G_1$, it suffices to upper bound the quantity  
\begin{align*}
\int_0^T \frac{\varphi^{p-2}}{h_0-\varphi}dt.
\end{align*}
To achieve this goal, we first prove that there exists positive constants $A,B$ are independent of $b$ such that
\begin{equation}\label{eq:123}
h_0-\varphi(t)\geq Ab+Bt^2,\quad \forall t\in [0,T],
\end{equation}
Moreover,
\[
h_0-\varphi(t)=h_0-h_b+\varphi(0)-\varphi(t)\geq Ab+\varphi(0)-\varphi(t),
\] 
hence \eqref{eq:123} reduces to 
\begin{equation}\label{eq:234}
\varphi(t)\leq \varphi(0)-Bt^2,\quad\forall t\in [0,T].
\end{equation}
We have 
\[
G'(h_0) =\frac{-1}{8} h_0=-\lambda<0.
\]
Thus, there exists $\delta>0$ sufficiently small that is independent of $b$, such that
\[
G'(y)<-\frac{3\lambda}{4},\quad\forall y\in [h_0-\delta,h_0].
\]
Therefore, $\forall y\in [h_0-\delta,h_b]$, for sufficiently small $b$, we have
\[
G_b'(y)<G'(y)+|G'(y)-G_b'(y)|<\frac{-3\lambda}{4}+\frac{2b(p-2)h_0^{p-3}}{p+2}<-\frac{\lambda}{2}.
\] 
By continuity and decreasing of $\varphi$, there exists a unique $t_0>0$ such that
\[
h_b\geq \varphi(t)\geq h_0-\delta,\ \forall t\in [0,t_0],
\]
and
\[
h_0-\delta\geq \varphi(t)>0,\ \forall t\in [t_0,\infty),
\]
On $[0,t_0]$, since $G_b' \leq -\frac{\lambda}{2} < 0$, applying Lagrange theorem, we have:
\begin{equation*}
    G_b(\varphi)=G_b(\varphi)-G_b(h_b)=G_b'(\xi)(\varphi-h_b) \geq \frac{\lambda}{2}(h_b - \varphi)
\end{equation*}
Using the differential equation $\varphi' = -\varphi^2\sqrt{G_b(\varphi)}$, we obtain:
\begin{equation*}
    \frac{-\varphi'}{\sqrt{h_b - \varphi}} \geq (h_0-\delta)^2\sqrt{\frac{\lambda}{2}}. 
\end{equation*}
Integrating both sides from $0$ to $t$ ($t \le t_0$):
\begin{equation*}
    2\sqrt{h_b - \varphi(t)} \geq (h_0-\delta)^2\sqrt{\frac{\lambda}{2}} t \implies h_b - \varphi(t) \ge (h_0-\delta)^2\frac{\lambda}{8} t^2
\end{equation*}
\begin{equation*}
    \implies \varphi(t) - \varphi(0) \leq -\frac{\lambda (h_0-\delta)^2}{8} t^2 \quad \forall t \in [0, t_0]
\end{equation*}
For $t>t_0$, since $\varphi$ decreases on $\R^+$, we have $\varphi(t)<h_0 - \delta$. It follows that 
\begin{equation*}
    h_b - \varphi(t) \ge h_b - (h_0 - \delta) \ge \delta - O(b)
\end{equation*}
Thus, for $b$ sufficiently small, we have 
\begin{align*}
h_b - \varphi(t) &\ge \frac{\delta}{2} = \left(\frac{\delta}{2T^2}\right) T^2 \ge \left(\frac{\delta}{2T^2}\right) t^2
\end{align*}
It is equivalent to
\begin{equation*}
    \varphi(t) - h_b \le -\left(\frac{\delta}{2T^2}\right) t^2 \quad \forall t \in [t_0, T].
\end{equation*}
Letting $B=\min\left\{\frac{\lambda (k_0-\delta)^2}{8},\frac{\delta}{2T^2}\right\}$, \eqref{eq:234} is proved. \\
Using \eqref{eq:234} and the proof of Step 2 of Lemma \ref{lm2.5}, we have 
\begin{align*}
b\int_0^T \frac{\varphi^{p-2}}{h_0-\varphi}dt &\leq bh_0^{p-2}\int_0^T \frac{1}{Ab+Bt^2}dt\leq bh_0^{p-2}\frac{1}{\sqrt{ABb}}\arctan\left(T\sqrt{\frac{B}{Ab}}\right)\\
&\leq \frac{h_0^{p-2}\sqrt{b}}{\sqrt{AB}}\frac{\pi}{2}=O(\sqrt{b}).
\end{align*}
In conclusion, we have
\[
\sup_{x\in\R}|G_1(x)|=O(\sqrt{b}).
\]
\textbf{Step 4. Conclusion.}\\
By Step 2 and Step 3, we have
\[
\varphi^2(x)=\frac{4c}{1+c^2(x+O(\sqrt{b}))^2}.
\]
Comparing it to $\varphi_0^2$, we obtain, for $b$ sufficiently small,
\begin{align*}
|\varphi(x)-\varphi_0(x)|^2 &\leq |\varphi^2(x)-\varphi_0^2(x)|=\left|\frac{4c}{1+c^2(x+O(\sqrt{b}))^2}-\frac{4c}{1+c^2x^2}\right|\\
&\leq \frac{4c^3 O(\sqrt{b})(2|x|+O(\sqrt{b}))}{(1+c^2x^2)(1+c^2(x+O(\sqrt{b}))^2)}\\
&\leq \frac{4c^3 O(\sqrt{b})(2|x|+1)}{(1+c^2(x+O(\sqrt{b}))^2)(1+2c|x|)/2}\\
&\leq \frac{8c^3\left(1+\frac{1}{c}\right) O(\sqrt{b})}{(1+c^2(x+O(\sqrt{b}))^2)}\\
&=\frac{O(\sqrt{b})}{1+c^2(x+O(\sqrt{b}))^2}.
\end{align*} 
This yields the desired results \eqref{eq_prove_behave_solitons_zero_mass}.\\

\end{proof}

\subsection{Asymptotic behavior of soliton profile with respect to parameter $p$}

In this section, we study the changing of the soliton profile with respect to parameter $p$ by fixed the other parameters $\omega$, $c$ and $b$. Recall that when $p=4$, $b\geq 0$ the soliton profile $\varphi_0$ is of the form 
\begin{equation}
\varphi_0^2(x) = 
\begin{cases} 
\frac{8a}{\sqrt{c^2 + 4\gamma a} \cosh(2\sqrt{a} x)-c}  & \text{if } -2\sqrt{\omega} < c < 2\sqrt{\omega}, \\
\frac{4c}{(cx)^2+\gamma} & \text{if } c = 2\sqrt{\omega},
\end{cases}
\end{equation}
where $\gamma=1+\frac{16b}{3}$.\\
Let $b\geq 0$, $-2\sqrt{\omega} < c < 2\sqrt{\omega}$. We investigate the variation of the profile $\varphi$ as $p$ decreases to $4$. We have the following result.

\begin{lemma}\label{lm2.6}
Assume that 
\begin{equation}
\label{eq:assume k_0}
k_0>e^{\frac{1}{3}},
\end{equation}
where $k_0=\frac{1}{\gamma}\left( 2c+ 2\sqrt{c^2+4a\gamma}\right)$. The function $\varphi$ satisfies the following properties:
\begin{equation}\label{eq:need to prove lm 2.6}
\norm{\varphi-\varphi_0}_{L^{\infty}}=O(\sqrt[4]{p-4}),\quad \norm{\varphi-\varphi_0}_{L^2}=O(\sqrt[4]{p-4}).
\end{equation}
Moreover, 
\begin{equation}
\label{eq:need_prove2_lm2.6}
\norm{\partial_{\omega}\varphi-\partial_{\omega}\varphi_0}_{L^2}=O(\sqrt[4]{p-4}),
\end{equation}
and
\begin{equation}
\label{eq:need_prove3_lm2.6}
\norm{\partial_c\varphi-\partial_c\varphi_0}_{L^2}=O(\sqrt[4]{p-4}).
\end{equation}
\end{lemma}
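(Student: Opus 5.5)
The argument follows the proof of Lemma \ref{lm2.5}, now with $\varphi_0$ the $p=4$ profile (including $b|\varphi|^4\varphi$) and $p-4$ playing the role of $b$. With $y=\varphi^2$, \eqref{eqofvarphi2} gives
\[
y'=-y\sqrt{F_p(y)},\qquad F_p(y)=4a+cy-\tfrac14 y^2-\tfrac{8b}{p+2}y^{p/2}.
\]
We compare $F_p$ with the $p=4$ function
\[
F(y)=4a+cy-\tfrac{\gamma}{4}y^2=\tfrac{\gamma}{4}(k_0-y)(y-y_-),\qquad \gamma=1+\tfrac{16b}{3},
\]
where $k_0$ is the positive root given in the statement. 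The perturbation is
\[
F(y)-F_p(y)=8b\Bigl(\tfrac{y^{p/2}}{p+2}-\tfrac{y^2}{6}\Bigr).
\]
By the mean value theorem in $p$ this equals $O(p-4)$ uniformly on $[0,k_0]$, and it is $O((p-4)y^2(1+|\ln y|))$ near $y=0$, which stays integrable against the exponential decay of Lemma \ref{eq_behavior_of_varphi}.

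The sign of this difference is the delicate point, and I expect it to be where assumption \eqref{eq:assume k_0} enters. In Lemma \ref{lm2.5} the perturbation has a fixed sign, which gave $k_b<k_0$, $F_b<F$ and $F_1>0$. Here $\partial_p\bigl(y^{p/2}/(p+2)\bigr)\big|_{p=4}=\tfrac{y^2}{6}\bigl(\tfrac12\ln y-\tfrac16\bigr)$, which is positive exactly when $y>e^{1/3}$. So near the peak $y\approx k_0>e^{1/3}$ we get $F_p(k_0)<0$ for $p$ close to $4$. Hence the maximum $k_p=y(0)$ satisfies $k_p<k_0$, and the estimate $0<k_0-k_p=O(p-4)$ follows exactly as in \eqref{eq:estimate k_0-k_b}. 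Away from the peak the sign may change; I would simply carry absolute values, since only $|F-F_p|=O(p-4)$ is needed there.

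Next I would write
\[
y'=-y\sqrt{F(y)}+\frac{y\,(F(y)-F_p(y))}{\sqrt F+\sqrt{F_p}},
\]
divide by $y\sqrt F$, and perform the same arcosh substitution (now with $\gamma$). This gives the analogue of \eqref{eq_varphi_formula}:
\[
\varphi^2(x)=\frac{8a}{\sqrt{c^2+4\gamma a}\,\cosh\bigl(2\sqrt a x+\operatorname{arcosh}(w(0))-2\sqrt a\,F_1(x)\bigr)-c},
\]
where
\[
F_1(x)=\int_0^x \frac{F-F_p}{\sqrt F(\sqrt F+\sqrt{F_p})}\,dt .
\]
Then I bound $\sup|F_1|$ in the same way as in Step 2. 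The tail $t>T$ contributes $O(p-4)$ by the exponential decay. On $[0,T]$, the lower bound $k_0-y(t)\ge A(p-4)+Bt^2$ comes from the mean value argument on $F_p'$ near $k_0$; this uses $F'(k_0)<0$ and $|F_p'-F'|=O(p-4)$. The arctan computation then yields $O(\sqrt{p-4})$. Together with $\operatorname{arcosh}(w(0))=O(\sqrt{p-4})$, this gives $|\varphi^2-\varphi_0^2|\lesssim \sqrt{p-4}\,/\,(\cosh(2\sqrt a x)+1)$, and hence \eqref{eq:need to prove lm 2.6}.

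For \eqref{eq:need_prove2_lm2.6} and \eqref{eq:need_prove3_lm2.6}, I differentiate the profile equation in $\omega$ and in $c$. The linearized operators are
\[
L_p=-\partial_{xx}+a+\tfrac{3c}{2}\varphi^2-\tfrac{15}{16}\varphi^4-b(p+1)\varphi^p,
\]
and $L_0$ is the same expression at $p=4$ with $\varphi_0$. The nondegeneracy of $L_0$ for the $p=4$ equation with $b$ (one negative eigenvalue, kernel spanned by $\partial_x\varphi_0$, as in \cite{LiSiSu13} and \cite{Oh14}) lets me invert $L_0$ on even functions. The coefficient difference satisfies
\[
\|L_0-L_p\|_{L^\infty}\lesssim \|\varphi-\varphi_0\|_{L^\infty}+\|\varphi^p-\varphi^4\|_{L^\infty}=O(\sqrt[4]{p-4}),
\]
so the absorption argument from Lemma \ref{lm2.5} closes. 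In the $c$-derivative there is an extra source term $\partial_c a=-c/2$ that also appears, but it is the same for both $p$ and cancels in the difference.

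The main obstacle is the sign and uniformity control of $F-F_p$, in particular keeping $k_p<k_0$ and keeping $F_p>0$ below $k_p$ uniformly in $p$. This is exactly where \eqref{eq:assume k_0} is used.
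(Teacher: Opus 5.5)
Your proposal is correct and follows essentially the same route as the paper. The shared steps are the $\operatorname{arcosh}$ representation with a perturbative phase integral (the paper's $K_1$), the sign computation at $k_0$ using $k_0>e^{1/3}$ to get $k_p<k_0$, the $\arctan$ bound $\int_0^T (A(p-4)+Bt^2)^{-1}\,dt\lesssim (p-4)^{-1/2}$, and inversion of $L_0$ on even functions for the derivative estimates. The differences are minor. Your single mean-value bound $|F-F_p|\lesssim (p-4)y^2(1+|\ln y|)$ replaces the paper's Taylor splitting at $T_1,T_2$. You correctly get $\operatorname{arcosh}(w(0))=O(\sqrt{p-4})$, where the paper writes $O(p-4)$; that slip is harmless since $K_1$ is already $O(\sqrt{p-4})$. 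Finally, the term $A(p-4)$ in your lower bound for $k_0-y$ needs $k_0-k_p\gtrsim p-4$, which you do not state explicitly. It follows from your strict inequality $F_p(k_0)\lesssim-(p-4)$ together with boundedness of $F_p'$.
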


\begin{proof}
We divide the proof in several steps. In what follows, we assume that $p$ is sufficiently close to $4$.\\
\textbf{Step 1: Formulating the profile $\varphi$.}\\ 
Let $y=\varphi^2$. As $\varphi$ is an even function, so is $y$. Multiply both sides of \eqref{eqofvarphi2} by $\varphi$, we have
\begin{equation}\label{eq_of_y_p}
y'=-y\sqrt{4a+cy-\frac{1}{4}y^2-\frac{8b}{p+2}y^{p/2}}.
\end{equation}
For convenience, let 
\[
K(y)=4a+cy-\frac{\gamma}{4} y^2,\quad K_p(y)=K(y)-\frac{8b}{p+2}y^{p/2}+\frac{4b}{3}y^2=4a+cy-\frac{1}{4}y^2-\frac{8b}{p+2}y^{p/2}.
\]
We have $$K(y)=\frac{\gamma}{4}(y_+ - y)(y-y_{-}),$$ with $y_{\pm}=\frac{1}{\gamma}\left( 2c\pm 2\sqrt{c^2+4a\gamma}\right)$. It is easy to verify that $y_+>0>y_{-}$. \\

Let $k_p$ be the maximal value of $y$ on $\R$ i.e $k_p=y(0)$. When $p=4$, let $y_0=\varphi_0^2$ and the maximal value of $y_0$ on $\R$
\[
k_0=y_0(0)=y_+,
\]  
We have
\[
K_p(k_0)=K(k_0)-\frac{8b}{p+2}k_0^{p/2}+\frac{4b}{3}k_0^2=-\frac{8b}{p+2}k_0^{p/2}+\frac{4b}{3}k_0^2.
\]
The function $f(p)=\frac{k_0^{p/2}}{p+2}$ has a derivative at $p=4$ equal to
\[
f'(4)=\frac{f(4)}{2}\left(\ln(k_0)-\frac{1}{3}\right)>0,
\]
by the assumption \eqref{eq:assume k_0}. Hence, $K_p(k_0)<0$ provided that $p$ is sufficiently close to $4$. Moreover, $K_p(t)>0$ for all $t\in (0,k_p)$. This implies that $k_0>k_p\geq y(x)$ for all $x\in\R$. Consequently, $K(y)>\frac{\gamma}{4}(k_0-y)(-y_{-})>0$ for all $x\in\R$.\\

Since $k_p$ is the first positive root of $K_p$, we have, for $p$ sufficiently close to $4$,
\begin{align*}
0&=K_p(k_p)=K(k_p)-\frac{8b}{p+2}k_p^{p/2}+\frac{4b}{3}k_p^2\\
&= \frac{\gamma}{4}(y_+ - k_p)(k_p-y_{-})-\frac{8b}{p+2}k_p^{p/2}+\frac{4b}{3}k_p^2.
\end{align*}
This yields
\begin{align*}
\frac{\gamma}{4}(k_0-k_p)(-y_{-})&<\frac{\gamma}{4}(y_+ - k_p)(k_p-y_{-})=\frac{8b}{p+2} k_p^2\left(k_p^{(p-4)/2}-\frac{p+2}{6}\right)\\
&<\frac{8b}{6} k_0^2\left(k_0^{(p-4)/2}-\frac{p+2}{6}\right)=\frac{4b k_0^2}{3} O(p-4).
\end{align*}
This implies that
\begin{equation}
\label{eq:estimate k_0-k_p}
0<k_0-k_p<\frac{4}{\gamma|y_{-}|}\frac{4bk_0^2}{3}O(p-4)=O(p-4),
\end{equation}

From \eqref{eq_of_y_p}, we have
\begin{equation}\label{eq_of_y_2_p}
y'=-y\sqrt{K_p(y)}=-y\sqrt{K(y)}+\frac{8by^3}{p+2}\frac{y^{(p-4)/2}-\frac{p+2}{6}}{\sqrt{K(y)}+\sqrt{K_p(y)}}.
\end{equation}
Thus, dividing both sides by $y\sqrt{K(y)}$ and integrating from $0$ to $x$, we obtain
\begin{equation}\label{eq_varphi_o(p)}
\int_0^x \frac{y'}{y\sqrt{K(y)}} dt= \int_0^x -1+\frac{8by^2}{p+2}\frac{y^{(p-4)/2}-\frac{p+2}{6}}{\sqrt{K(y)}(\sqrt{K(y)}+\sqrt{K_p(y)})} dt,\quad \forall x>0.
\end{equation}
By changing variable $y=\frac{y_+}{1+z}$, we have
\begin{align}
\int \frac{y'}{y\sqrt{K(y)}} dt &=\frac{2}{\sqrt{\gamma}}\int \frac{dy}{y\sqrt{(y_+ - y)(y-y_{-})}}\nonumber\\
&=-\frac{2}{\sqrt{\gamma}}\int \frac{dz}{\sqrt{y_+ z (y_+ - y_{-}- y_{-} z)}}\nonumber\\
&=-\frac{2}{\sqrt{\gamma}}\int \frac{dz}{\sqrt{\frac{16a}{\gamma}((z+C_0)^2-C_0^2)}},\quad (\text{ with } 32aC_0=\gamma y_+(y_+ -y_{-})),\nonumber\\
&=\frac{-1}{2\sqrt{a}}\int \frac{d\hat{z}}{\sqrt{\hat{z}^2-C_0^2}},\quad (\text{ with } \hat{z}=z+C_0),\nonumber\\
&=\frac{-1}{2\sqrt{a}}\int \frac{dw}{\sqrt{w^2-1}},\quad (\text{ with } \hat{z}=C_0w),\nonumber\\
&=\frac{-1}{2\sqrt{a}}\text{arcosh}(w).\label{eq_arcoshw_p}
\end{align}
Note that from inequality $y_+> y(x)> y_{-}$, it follows that $$w(x)=\frac{y_+}{C_0 y(x)}+\frac{C_0-1}{C_0}>1,\quad \text{ for all } x\in\R.$$ 
Therefore, the function $\text{arcosh}(w)$ is well-defined on $\R$. \\
Combining \eqref{eq_arcoshw_p} with \eqref{eq_varphi_o(p)} and $y=\varphi^2$, we have 
\[
\frac{-1}{2\sqrt{a}}\left(\text{arcosh}\left(\frac{y_+}{C_0\varphi^2(x)}+\frac{C_0-1}{C_0}\right)-\text{arcosh}(w(0))\right)=-x+K_1(x),
\]
where,
\[
K_1(x)=\int_0^x \frac{8by^2}{p+2}\frac{y^{(p-4)/2}-\frac{p+2}{6}}{\sqrt{K(y)}(\sqrt{K(y)}+\sqrt{K_p(y)})}  dt, \quad \forall x>0.
\]
This implies that
\[
\varphi^2(x)=\frac{1}{\frac{C_0}{y_+}\cosh(2\sqrt{a}x+\text{arcosh}(w(0))-2\sqrt{a}K_1(x))-\frac{C_0-1}{y_+}}.
\]
Substituting $C_0=\frac{\gamma y_+(y_+ - y_{-})}{32a}$, $y_{\pm}=\frac{1}{\gamma}\left(2\pm 2\sqrt{c^2+4a\gamma}\right)$ into the above expression, we get that
\begin{equation}\label{eq_varphi_formula_p}
\varphi^2(x)=\frac{8a}{\sqrt{c^2+4a\gamma} \cosh(2\sqrt{a}x+\text{arcosh}(w(0))-2\sqrt{a}K_1(x))-c}.
\end{equation}

\textbf{Step 2: Estimate $w(0)$}\\

We have $w(0)>1$ and
\[
w(0)=\frac{y_+}{C_0y(0)}+\frac{C_0-1}{C_0}=\frac{k_0}{C_0 k_p}+\frac{C_0-1}{C_0}=1+O(p-4).
\]
It follows that, for $b$ small enough,
\begin{align*}
\text{arcosh}(w(0))&=\ln (w(0)+\sqrt{w(0)^2-1})=\ln(1+O(p-4)+\sqrt{(1+O(p-4))^2-1})\\
&=\ln(1+O(p-4))=O(p-4).
\end{align*}

\textbf{Step 3:Estimate $K_1$.} By oddness of $K_1$, we just need to estimate $K_1$ on $(0,\infty)$.\\
Noting that $K(y)\approx (k_0 -y)(y-y_{-})\approx k_0-y$, with the implicit constant is independent of $p$, we have
\begin{equation}
\label{eq:approx K_1}
|K_1(x)|\lesssim \int_0^x \frac{8by^2}{p+2}\frac{\left|y^{(p-4)/2}-\frac{p+2}{6}\right|}{K(y)}dt\approx \int_0^x \frac{y^2\left|y^{(p-4)/2}-\frac{p+2}{6}\right|}{k_0-y}dt,
\end{equation}
where the implicit constant is independent of $p$. It follows that
\[
\sup_{x\in\R}|K_1(x)| \lesssim \int_0^{\infty}\frac{y^2}{k_0-y}\left|y^{(p-4)/2}-\frac{p+2}{6}\right| dt.
\]
By Lemma \ref{Taylor expansion}, applying the Taylor expansion at zero to the function $y(x) = y^x$ with respect to the variable $x = \frac{p-4}{2}$, we have, for some $\xi \in (0, (p-4)/2)$,
\begin{align*}
|y^{(p-4)/2}-1|&=\left|\ln(y)\frac{p-4}{2} + y^{\xi} \left[(\xi\ln y)^2+\ln y\right]\frac{(p-4)^2}{8}\right|\\
&\lesssim |\ln(y)|(p-4)+|\ln (y)|^2(p-4)^2\lesssim \sqrt{p-4},
\end{align*}
provided that $y<1$ and $-\ln(y)= |\ln(y)|<\frac{1}{\sqrt{p-4}}$ or $e^{-\frac{1}{\sqrt{p-4}}}<y<1$. Since $y(0)=k_p\approx k_0>1$, there exists $T_1<T_2$ such that $y(T_1)=1$ and $y(T_2)=e^{-\frac{1}{\sqrt{p-4}}}$. The integral from $T_1$ to $T_2$ is admissible i.e
\[
\int_{T_1}^{T_2}\frac{y^2}{k_0-y}\left|y^{(p-4)/2}-\frac{p+2}{6}\right| dt\lesssim \sqrt{q-4} \int_{T_1}^{T_2}\frac{y^2}{k_0-1}dt\lesssim \sqrt{q-4} \norm{y}^2_{L^2}=O(\sqrt{q-4}).
\] 
Moreover, $\sqrt{p-4}> e^{-\frac{1}{\sqrt{p-4}}}>y(t)>0$ for all $t\in (T_2,\infty)$. It follows that
\begin{align*}
\int_{T_2}^{\infty}\frac{y^2}{k_0-y}\left|y^{(p-4)/2}-\frac{p+2}{6}\right| dt &\lesssim \sqrt{p-4}\int_{T_2}^{\infty} y(t)dt =O(\sqrt{p-4}).
\end{align*} 
Thus, it remains to estimate the integral over $(0,T_1)$. Noting that $T_1$ is independent of $p$ and $k_0>k_p>y(t)>1$ on $(0,T_1)$, we have
\begin{align}
\int_{0}^{T_1}\frac{y^2}{k_0-y}\left|y^{(p-4)/2}-\frac{p+2}{6}\right| dt &\lesssim \left(k_0^{(p-4)/2}-1+\frac{p-4}{6} \right)\int_0^{T_1} \frac{1}{k_0-y}dt\nonumber\\
&\leq C(p-4)\int_0^{T_1}\frac{1}{k_0-y}dt.\label{eq:345}
\end{align} 
Similarly to the proof of Lemma \ref{lm2.5}, we have
\[
\int_0^{T_1}\frac{1}{k_0-y}dt \lesssim \frac{1}{\sqrt{p-4}}.
\]
Thus, by \eqref{eq:345}, we have
\[
\int_{0}^{T_1}\frac{y^2}{k_0-y}\left|y^{(p-4)/2}-\frac{p+2}{6}\right| dt=O(\sqrt{p-4}).
\] 
In conclusion,
\[
\sup_{x\in\R}|K_1(x)|=O(\sqrt{p-4}).
\]
\textbf{Step 4: Conclusion.}\\
From Step 2, Step 3 and \eqref{eq_varphi_formula_p}, we have
\[
\varphi^2(x)=\frac{8a}{\sqrt{c^2+4a\gamma} \cosh(2\sqrt{a}x+O(\sqrt{p-4}))-c}.
\]
Similarly to the proof of Lemma \ref{lm2.5}, by letting $z=2\sqrt{a}x$, we have
\begin{align*}
|\varphi(x)-\varphi_0(x)|^2 &\le |\varphi^2(x)-\varphi_0^2(x)|\leq\frac{O(\sqrt{p-4})}{(|c|+\varepsilon) \cosh(z)-c}.
\end{align*}
This easily yields \eqref{eq:need to prove lm 2.6}, while \eqref{eq:need_prove2_lm2.6} and \eqref{eq:need_prove3_lm2.6} can be established similarly to \eqref{eq_need_prove2} and \eqref{eq_need_prove3}, respectively. We only need to verify that the quantity $L_0-L_p$ is admissible. From \eqref{eq:need to prove lm 2.6}, it is easy to verify that
\[
|L_0-L_p|=O(\sqrt[4]{p-4})+b(p+1)\varphi_0^4(\varphi_0^{p-4}-1).
\]
To estimate the last term, we utilize the similar approach to bound the term $K_1$ in Step 3. Let $T_1<T_2$ be two positive constants such that $\varphi_0(T_1)=1$, $\varphi_0(T_2)=e^{-\frac{1}{\sqrt{p-4}}}$. On $(T_1,T_2)$, $e^{-\frac{1}{\sqrt{p-4}}}<\varphi_0<1$. Thus, there exists $\xi\in(0,p-4)$ such that
\begin{align*}
|\varphi_0^{p-4}-1|&=\left|\ln(\varphi_0)(p-4)+\varphi^{\xi}\left[(\xi\ln\varphi_0)^2+\ln\varphi_0\right]\frac{(p-4)^2}{2}\right|\lesssim \sqrt{p-4}.
\end{align*}
When $x>T_2$, we have $0<\varphi_0<e^{-\frac{1}{\sqrt{p-4}}}$. It follows that
\[
|\varphi_0^4||\varphi_0^4-1|\lesssim \varphi_0^4 <(p-4)^2.
\]
On $(0,T_1)$, we have $1<\varphi_0<\sqrt{k_0}$. Thus, 
\[
|\varphi^{p-4}-1|\lesssim |k_0^{\frac{(p-4)}{2}}-1|\lesssim p-4. 
\]
In conclusion, $L_0-L_p$ is admissible i.e
\[
\sup_{x\in\R}|(L_0-L_p)(x)|=O(\sqrt[4]{p-4}).
\]
This completes the proof of Lemma \ref{lm2.6}.\\

\end{proof}

\subsection{Stability of solitons}

In this section, we present several stability and instability results for the solitons introduced in the previous section. The proofs of these results follow similar arguments to those in \cite{CoWu18,LiSiSu13,CoOh06}.

\begin{theorem}\label{sta_from_cowu}
Assume that there exists $\mu\in\R$ such that for any $\varepsilon\in H^1(\R)$ verifying the orthogonality conditions
\begin{equation}\label{eq_ortho_condition}
(\varepsilon,i\phi_{\omega,c})_2=(\varepsilon,\partial_x\phi_{\omega,c})_2=(\varepsilon,\phi_{\omega,c}+i\mu\partial_x\phi_{\omega,c})_2=0,
\end{equation}
we have
\[
\left<S_{\omega,c}''(\phi_{\omega,c})\varepsilon,\varepsilon\right>_2\gtrsim  \norm{\varepsilon}_{H^1}^2,
\]
where
\[
S_{\omega,c}''(\phi_{\omega,c})u:=(E''(\phi_{\omega,c})+\omega M''(\phi_{\omega,c})+cP''(\phi_{\omega,c}))u,\quad\forall u\in H^2(\R).
\]
Then, the associated solitary wave $u_{\omega,c}(t,x)=e^{it\omega}\phi_{\omega,c}(x-ct)$ of \eqref{eq1} is orbitally stable. 
\end{theorem}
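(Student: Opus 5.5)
We argue by contradiction, following the standard Lyapunov-functional approach of \cite{CoWu18,CoOh06}. Write $S_{\omega,c}=E+\omega M+cP$. It is conserved by the flow of \eqref{eq1}, and $\phi:=\phi_{\omega,c}$ is a critical point of it. We assume the solitary wave is unstable. Then there exist $\varepsilon_0>0$, initial data $u_{0,n}\to\phi$ in $H^1(\R)$, and times $t_n$ such that the solutions satisfy
\[
\inf_{\theta,y}\norm{u_n(t_n)-e^{i\theta}\phi(\cdot-y)}_{H^1}=\varepsilon_0 .
\]
The constant $\varepsilon_0$ will be chosen small below. We use the $H^1$ local well-posedness and the $H^1$ continuity of the flow recalled in the introduction.

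\textbf{Step 1: modulation.} By the implicit function theorem applied to
\[
(\theta,y,\lambda)\mapsto\Big((\varepsilon,i\phi)_2,\ (\varepsilon,\partial_x\phi)_2,\ (\varepsilon,\phi+i\mu\partial_x\phi)_2\Big),
\]
with $\varepsilon=e^{-i\theta}u(\cdot+y)-(1+\lambda)\phi$, we decompose every $u$ in a small tube around the orbit of $\phi$ as
\[
e^{-i\theta}u(\cdot+y)=(1+\lambda)\phi+\varepsilon,
\]
where $\varepsilon$ satisfies \eqref{eq_ortho_condition} and
\[
|\lambda|+\norm{\varepsilon}_{H^1}\lesssim \inf_{\theta,y}\norm{u-e^{i\theta}\phi(\cdot-y)}_{H^1}.
\]
The Jacobian is nondegenerate at $(0,0,0)$. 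Its diagonal-type entries are $\norm{\phi}_2^2$, $\norm{\partial_x\phi}_2^2$ and $(\phi,\phi+i\mu\partial_x\phi)_2$, and the cross terms are handled through the $\theta,y$ invariance. If $(\phi,\phi+i\mu\partial_x\phi)_2$ vanishes, we replace the scaling direction by a suitable combination of $\phi$ and $\partial_\omega\phi$, $\partial_c\phi$.

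\textbf{Step 2: expansion of $S_{\omega,c}$.} Expanding $S_{\omega,c}$ around $\phi$ and using $S'_{\omega,c}(\phi)=0$ gives
\[
S_{\omega,c}(u)-S_{\omega,c}(\phi)=\tfrac12\langle S''_{\omega,c}(\phi)(\lambda\phi+\varepsilon),\lambda\phi+\varepsilon\rangle+o(|\lambda|^2+\norm{\varepsilon}_{H^1}^2).
\]
The remainder is controlled using the one-dimensional Sobolev embedding and the polynomial structure of the nonlinearities, including $|u|^{p+2}$ with $p\ge4$. To remove the $\lambda$ terms we use the conservation of $M$ and $P$. Since $M(u)=M(\phi)+O(\norm{u_0-\phi}_{H^1})$ and similarly for $P$, the quantity
\[
(M'(\phi),\lambda\phi+\varepsilon)_2=(\phi,\lambda\phi+\varepsilon)_2
\]
is small up to quadratic terms. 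Because $\varepsilon\perp\phi+i\mu\partial_x\phi$, we get that $\lambda$ is $o(\norm{\varepsilon}_{H^1})$ plus a term of size $O(\norm{u_{0,n}-\phi}_{H^1})$; the choice of $\mu$ is exactly the one making the combination of mass and momentum constraints yield this. Then the coercivity hypothesis gives
\[
S_{\omega,c}(u_n(t_n))-S_{\omega,c}(\phi)\gtrsim \norm{\varepsilon}_{H^1}^2-o(\norm{\varepsilon}_{H^1}^2)-C\norm{u_{0,n}-\phi}_{H^1}.
\]

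\textbf{Step 3: conclusion.} By conservation, the left-hand side equals $S_{\omega,c}(u_{0,n})-S_{\omega,c}(\phi)\to0$. Hence $\norm{\varepsilon_n}_{H^1}\to0$ and $\lambda_n\to0$, so the distance of $u_n(t_n)$ to the orbit tends to $0$. This contradicts its being equal to $\varepsilon_0$, which proves orbital stability.

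The main obstacle is Step 2: correctly eliminating the $\lambda$-direction through the mass and momentum constraints. The third orthogonality condition with parameter $\mu$ is not simply $\varepsilon\perp\phi$, so one must check that the linear form $(\phi,\cdot)_2$ (and $P'(\phi)=i\partial_x\phi$) restricted to the constraint set still pins down $\lambda$. I would first try to write $\lambda\phi$ as a linear combination involving the mixed mass–momentum functional $M+\mu' P$, adjusted to the conditions \eqref{eq_ortho_condition}, exactly as in \cite{CoWu18}.
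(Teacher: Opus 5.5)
Your proposal is the standard Weinstein Lyapunov/modulation argument, and this is exactly what the paper invokes: it simply refers to \cite{We85} and \cite[Remark 2.3]{CoWu18} without further detail, so your approach is correct and essentially the same. The step you single out as the main obstacle closes at once. Up to the sign convention for $P$, $\phi+i\mu\partial_x\phi$ is the gradient at $\phi$ of the conserved functional $Q=M\pm\mu P$, so $\mu$ is arbitrary rather than chosen, and mass conservation alone would not suffice. This vector is automatically orthogonal to $i\phi$ and $\partial_x\phi$, so if you modulate along $Q'(\phi)$ instead of $\phi$, conservation of $Q$ gives $|\lambda|\lesssim|Q(u_0)-Q(\phi)|+\|\varepsilon\|_{H^1}^2$ with no nondegeneracy condition needed.
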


\begin{proof}
The proof closely follows the argument in \cite{We85} (see also \cite[Remark 2.3]{CoWu18}).
\end{proof}

\begin{remark}\label{rm:2.9}
Similarly to \cite{CoWu18}, we have Ker$(S_{\omega,c}''(\phi_{\omega,c}))=\{i\phi_{\omega,c},\partial_x\phi_{\omega,c}\}$ and the negative part of $S_{\omega,c}''(\phi_{\omega,c})$ has only one dimension. Moreover, from \cite[Step 4, Proof of Proposition 2.1]{CoWu18}, if there exists $\mu\in\R$ such that for $\psi=\partial_{\omega}\phi_{\omega,c}+\mu\partial_c\phi_{\omega,c}$, we have
\[
\left<S_{\omega,c}''(\phi_{\omega,c})\psi,\psi\right> <0,
\]
then the conclusion of Theorem \ref{sta_from_cowu} hold. \\
From \cite{CoWu18}, we have
\[
\left<S_{\omega,c}''(\phi_{\omega,c})\psi,\psi\right> =-\partial_{\omega}M(\phi_{\omega,c})-\mu\partial_cM(\phi_{\omega,c})-\mu\partial_{\omega}P(\phi_{\omega,c})-\mu^2\partial_cP(\phi_{\omega,c}).
\]
Therefore, if $\partial_{\omega}M(\phi_{\omega,c})>0$ then $\left<S_{\omega,c}''(\phi_{\omega,c})\psi,\psi\right>  < 0$ for $\mu>0$ small enough and if $\partial_cP(\phi_{\omega,c})>0$ then $\left<S_{\omega,c}''(\phi_{\omega,c})\psi,\psi\right>  < 0$ for $\mu>0$ large enough. In these cases, the coercive property in Theorem \ref{sta_from_cowu} holds and the solitary wave is orbitally stable. 
\end{remark}

A pair $(\omega,c)\in\R^2$ is called an admissible pair if $\omega>\frac{c^2}{4}$. Let $\Omega$ be the set of all admissible pairs. Analogously to \cite{LiSiSu13}, we have the following results. 
\begin{theorem}\label{thm*}
For all $b\ge 0$ and admissible $(\omega,c)$, the space $H^1$ can be decomposed as the direct sum:
\[
H^1=N+Z+P,
\]
where the three subspaces intersect trivially and:
\begin{itemize}
\item[i)] $N$ is a one dimensional subspace such that for $u\in N$, $u\neq 0$,
\[
\left<S_{\omega,c}''(\phi_{\omega,c})u,u\right><0.
\]
\item[ii)] $Z$ is the two dimensional kernel of $S_{\omega,c}''(\phi_{\omega,c})$.
\item[iii)] $P$ is a subspace such that, for $p\in P$, 
\[
\left<S_{\omega,c}''(\phi_{\omega,c})p,p\right> \ge\delta\norm{p}_{H^1}^2,
\]
where the constant $\delta>0$ is independent of $p$.
\end{itemize}
\end{theorem}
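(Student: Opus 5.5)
Following \cite{LiSiSu13}, the plan is to reduce $S_{\omega,c}''(\phi_{\omega,c})$ to a real $2\times 2$ matrix Schrödinger operator and then read off the decomposition from its spectrum.

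\textbf{Step 1: reduction to a matrix operator.} I would use the gauge transformation \eqref{eq:varphi_change to_phi}. Write $u=e^{i\theta(x)}(v_1+iv_2)$ with $\theta(x)=\frac{c}{2}x-\frac14\int_{-\infty}^x\varphi^2$, and express $\langle S''_{\omega,c}(\phi_{\omega,c})u,u\rangle$ as a quadratic form in $(v_1,v_2)$. The underlying identity is that the action $S_{\omega,c}=E+\omega M+cP$, pulled back through the gauge map, becomes the action functional whose Euler--Lagrange equation is \eqref{eqofvarphi}. After linearizing at the real profile $\varphi$, the form should be
\[
\langle L_+ v_1,v_1\rangle+\langle L_- v_2,v_2\rangle
\]
plus nonlocal cross terms coming from $\int_{-\infty}^x|u|^2$. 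Here $L_+=L_b$ is the operator of Lemma \ref{lm2.5}, and $L_-=-\partial_{xx}+a+\frac c2\varphi^2-\frac{3}{16}\varphi^4-b\varphi^{p}$ satisfies $L_-\varphi=0$.

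\textbf{Step 2: spectral properties of $L_\pm$.} I would use the ODE facts established earlier.
\begin{itemize}
\item $\varphi>0$ and $L_-\varphi=0$, so Sturm--Liouville theory gives $L_-\geq0$ with $\ker L_-=\mathrm{span}\{\varphi\}$.
\item $L_+\varphi_x=0$, and $\varphi_x$ has exactly one zero (Lemma \ref{eq:soliton profile all}: $\varphi$ is decreasing on $\R^+$). Hence $L_+$ has exactly one negative eigenvalue and $\ker L_+=\mathrm{span}\{\varphi_x\}$. Simplicity of the kernel follows because a second solution of $L_+f=0$ grows like $e^{\sqrt a|x|}$, by the exponential decay of Lemma \ref{eq_behavior_of_varphi}.
\item The essential spectrum of both operators is $[a,\infty)$ with $a>0$, since the potentials decay exponentially by Lemma \ref{eq_behavior_of_varphi}.
\end{itemize}
These facts hold for every $b\ge0$ and every $p$, so no smallness is needed.

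\textbf{Step 3: assembling $N$, $Z$, $P$.} The cross terms in Step 1 would be handled as in \cite{LiSiSu13}: they are finite-rank, or can be absorbed by a change of variable, and they do not change the negative or zero counts. With that in hand, $S''_{\omega,c}(\phi_{\omega,c})$ is a self-adjoint relatively compact perturbation of $-\partial_{xx}+\omega+ic\partial_x$ in real form. Its essential spectrum lies in $[a,\infty)$, it has one negative eigenvalue, and its kernel is spanned by $i\phi_{\omega,c}$ and $\partial_x\phi_{\omega,c}$, which are the images of $\varphi$ in the $v_2$ slot and $\varphi_x$ in the $v_1$ slot (cf.\ Remark \ref{rm:2.9}). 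Let $N$ be the negative eigenspace, $Z$ the kernel, and $P$ the spectral subspace for $[\lambda_*,\infty)$, where $\lambda_*>0$ is the smallest positive spectral point.

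\textbf{Step 4: coercivity on $P$.} Spectral positivity gives $\langle S''p,p\rangle\ge\lambda_*\|p\|_{L^2}^2$ on $P$. To upgrade this to $H^1$, note that $\langle S''p,p\rangle=\|p_x\|^2+O(\|p\|_{L^2}^2)$, because the quartic term contributes at most $\|\phi\|_{L^\infty}^2\|p_x\|\|p\|$. Interpolating the two bounds gives $\langle S''p,p\rangle\geq\delta\|p\|_{H^1}^2$.

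\textbf{Main obstacle.} The hardest step is Step 1. The nonlocal phase $\int_{-\infty}^x|u|^2$ produces cross terms mixing $v_1$ and $v_2$, and I must show these do not create additional negative directions. My first attempt would be to mimic the explicit computation of \cite[Lemma 3.4]{LiSiSu13}, checking that the extra term $-b|\varphi|^p\varphi$ enters only through the local potentials.
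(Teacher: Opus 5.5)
The paper proves nothing here beyond ``analogously to \cite{LiSiSu13}'', so your overall route is the intended one. However, the step you defer as the ``main obstacle'' is where the whole proof lives. What you predict for it is wrong in ways that invalidate Steps 1 and 3.

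There are no nonlocal terms. The phase $\theta$ is built from the fixed profile $\varphi$, and $S''_{\omega,c}(\phi_{\omega,c})$ is a local differential operator. The diagonal blocks are also not $L_b$ and $L_-$. With $h=e^{i\theta}(v_1+iv_2)$ and $\theta'=\frac c2-\frac14\varphi^2$, a direct computation gives
\[
\langle S''_{\omega,c}(\phi_{\omega,c})h,h\rangle=\big\langle (L_b+\tfrac14\varphi^4)v_1,v_1\big\rangle+\langle L_-v_2,v_2\rangle+\int_{\R}\varphi^3v_1\,\partial_x\Big(\frac{v_2}{\varphi}\Big)\,dx .
\]
So the coupling is a first-order local term, not finite rank. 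Moreover, the $v_1$-block does not annihilate $\varphi_x$, so its spectrum is not the one you analysed in Step 2. Your kernel identification in Step 3 is wrong for the same reason. Since $\partial_x\phi_{\omega,c}=e^{i\theta}(\varphi_x+i\theta'\varphi)$, it corresponds to $(v_1,v_2)=(\varphi_x,\theta'\varphi)$, not to $(\varphi_x,0)$, and $(\varphi_x,0)$ is in fact not in the kernel. As written, you have no argument that the coupling preserves the negative and zero counts.

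The missing idea is the ground-state substitution $v_2=\varphi w$. Since $L_-\varphi=0$, one has $\langle L_-v_2,v_2\rangle=\int\varphi^2w_x^2$, and completing the square gives the exact identity
\[
\langle S''_{\omega,c}(\phi_{\omega,c})h,h\rangle=\langle L_bv_1,v_1\rangle+\int_{\R}\Big(\varphi w_x+\tfrac12\varphi^2v_1\Big)^2dx .
\]
Here $\varphi w_x=\partial_xv_2-(\varphi_x/\varphi)v_2\in L^2$, because $\varphi_x/\varphi$ is bounded by \eqref{eqofvarphi2}. Your Step 2 then finishes the job:
\begin{itemize}
\item \emph{At least one negative direction.} Take $v_1$ to be the negative eigenfunction of $L_b$ and $w_x=-\frac12\varphi v_1$.
\item \emph{At most one.} A two-dimensional negative subspace would project injectively onto the $v_1$-component, since $v_1=0$ makes the form $\ge 0$. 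That would contradict $n(L_b)=1$.
\item \emph{Kernel.} If $S''h=0$, testing against directions with $\eta_1=0$ gives $\partial_x\big(\varphi(\varphi w_x+\frac12\varphi^2v_1)\big)=0$, hence $\varphi w_x+\frac12\varphi^2v_1=0$. Then $L_bv_1=0$, so $v_1=\alpha\varphi_x$ and $v_2=\beta\varphi-\frac\alpha4\varphi^3$. This is exactly $\mathrm{span}\{i\phi_{\omega,c},\partial_x\phi_{\omega,c}\}$.
\end{itemize}

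A minor point on Step 4: the $ic\partial_x$ term and the cubic-derivative term contribute $O(\|p_x\|\|p\|)$, not $O(\|p\|_{L^2}^2)$. You need Young's inequality to get $\langle S''p,p\rangle\ge\frac12\|p_x\|^2-C\|p\|^2$ before interpolating; the rest of that step is fine.
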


As a consequence of Theorem \ref{thm*}, we have the following result.
\begin{corollary}\label{coro_negative_part}
For all $b\ge 0$ and admissible $(\omega,c)$. 
\[
n(S_{\omega,c}''(\phi_{\omega,c}))=1.
\]
\end{corollary}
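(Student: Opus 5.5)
The plan is to read off $n(S_{\omega,c}''(\phi_{\omega,c}))$ directly from the decomposition $H^1=N+Z+P$ of Theorem \ref{thm*}. Recall that $n(S_{\omega,c}''(\phi_{\omega,c}))$ is the maximal dimension of a subspace of $H^1$ on which the quadratic form $q(u)=\left<S_{\omega,c}''(\phi_{\omega,c})u,u\right>$ is negative definite. This equals the number of negative eigenvalues counted with multiplicity, since $S_{\omega,c}''(\phi_{\omega,c})$ is self-adjoint and its essential spectrum is contained in $[a,\infty)$ with $a>0$. The proof therefore consists of a lower bound and an upper bound.

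\textbf{Lower bound.} By Theorem \ref{thm*} i), $N$ is one dimensional and $q(u)<0$ for every $u\in N\setminus\{0\}$. Hence $n\ge 1$.

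\textbf{Upper bound.} I argue by contradiction. Suppose $V\subset H^1$ has dimension $2$ and $q<0$ on $V\setminus\{0\}$.

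First, $Z+P$ has codimension one in $H^1$, because $H^1=N\oplus(Z+P)$ and $\dim N=1$. So $V\cap(Z+P)$ contains some $v\neq 0$, which we write as $v=z+p$ with $z\in Z$ and $p\in P$.

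Next, since $z$ lies in the kernel of the self-adjoint operator $S_{\omega,c}''(\phi_{\omega,c})$, both cross terms vanish:
\[
\left<S_{\omega,c}''(\phi_{\omega,c})z,p\right>=\left<S_{\omega,c}''(\phi_{\omega,c})p,z\right>=0 .
\]
Therefore
\[
q(v)=q(p)\ge\delta\norm{p}_{H^1}^2\ge 0
\]
by Theorem \ref{thm*} iii). This contradicts $q(v)<0$, so $n\le 1$.

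The only point needing care is the cross-term cancellation above. It requires $Z$ to be exactly the kernel, which Theorem \ref{thm*} ii) guarantees, together with the symmetry of the form $q$ on $H^1$. The form is symmetric because $S_{\omega,c}''(\phi_{\omega,c})$ is the Hessian of the real functional $S_{\omega,c}$; its first-order term $\Im(\ldots\overline{u}u_x)$ is symmetric after an integration by parts. Once this is checked, the argument is pure linear algebra, and no spectral analysis beyond Theorem \ref{thm*} is needed.
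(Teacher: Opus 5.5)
Your argument is correct and follows the paper's route: the paper states Corollary~\ref{coro_negative_part} as an immediate consequence of the decomposition $H^1=N+Z+P$ in Theorem~\ref{thm*} without giving details. Your lower bound from $N$, together with the codimension-one intersection and the vanishing of the kernel cross-terms for the upper bound, makes that deduction explicit. Your identification of $n$ with the number of negative eigenvalues, via essential spectrum in $[\omega-c^2/4,\infty)$, is also correct.
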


\begin{theorem}(Grillakis, Shatah, Strauss \cite{GrShSt87,GrShSt90})\label{Gr_Sh_St}
\[
p(d'')\le n(S_{\omega,c}''(\phi_{\omega,c})),
\]
where $d(\omega,c)=S_{\omega,c}(\phi_{\omega,c})$ and $p(d'')$ is number of positive eigenvalue of the matrix $d''$ defined by
\[
d''(\omega,c)=\left(\begin{matrix}
\partial_{\omega}M(\phi_{\omega,c}) & \partial_cM(\phi_{\omega,c})\\
\partial_{\omega}P(\phi_{\omega,c}) & \partial_cP(\phi_{\omega,c})
\end{matrix}\right).
\]
Furthermore, if $d$ is non-degenerate at $(\omega,c)$ then the following result hold:
\begin{itemize}
\item[i)] If $p(d'')=n(S_{\omega,c}''(\phi_{\omega,c}))$ then the solitary wave is orbitally stable;
\item[ii)] If $n(S_{\omega,c}''(\phi_{\omega,c}))-p(d'')$ is odd then the solitary wave is orbitally unstable.
\end{itemize}
\end{theorem}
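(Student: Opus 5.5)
The statement is the abstract Grillakis--Shatah--Strauss criterion applied to \eqref{eq1}. I would prove it inside the GSS framework, using the two symmetries of \eqref{eq1}: phase rotation, with generator $u\mapsto iu$ and conserved quantity $M$, and translation, with generator $\partial_x$ and conserved quantity $P$. Write $S_{\omega,c}=E+\omega M+cP$, so that $S_{\omega,c}'(\phi_{\omega,c})=0$ by \eqref{eqofphi}. Then $d(\omega,c)=S_{\omega,c}(\phi_{\omega,c})$ satisfies
\[
\partial_\omega d=M(\phi_{\omega,c}),\qquad \partial_c d=P(\phi_{\omega,c}),
\]
and its Hessian is the matrix $d''$ of the statement. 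Smoothness of $(\omega,c)\mapsto\phi_{\omega,c}$ comes from the formula \eqref{eq:varphi_change to_phi} and the uniqueness in Lemma \ref{eq:soliton profile all}.

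\textbf{Step 1: the inequality $p(d'')\le n(S''_{\omega,c}(\phi_{\omega,c}))$.} Differentiate $S'_{\omega,c}(\phi_{\omega,c})=0$ with respect to $\omega$ and $c$ to get
\[
S''_{\omega,c}(\phi_{\omega,c})\partial_\omega\phi=-M'(\phi),\qquad S''_{\omega,c}(\phi_{\omega,c})\partial_c\phi=-P'(\phi).
\]
For $\xi\in\R^2$ set $\psi_\xi=\xi_1\partial_\omega\phi+\xi_2\partial_c\phi$. Then
\[
\langle S''\psi_\xi,\psi_\xi\rangle=-\xi^{T}d''\xi .
\]
Take the subspace of $\R^2$ spanned by eigenvectors of $d''$ with positive eigenvalues. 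On it the map $\xi\mapsto\psi_\xi$ is injective: if $\psi_\xi$ were zero, or even lay in $\ker S''=\mathrm{Span}\{i\phi,\partial_x\phi\}$ (Remark \ref{rm:2.9}), the quadratic form would vanish, contradicting $\xi^Td''\xi>0$. The image is therefore a $p(d'')$-dimensional space on which $S''$ is negative definite. The max--min characterization of $n(S'')$, or simply Theorem \ref{thm*}, then gives the inequality.

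\textbf{Step 2: stability when $p(d'')=n(S'')$ and $d''$ is non-degenerate.} I would show that $S''$ is coercive on the tangent space
\[
T=\{v\in H^1:\ \langle M'(\phi),v\rangle=\langle P'(\phi),v\rangle=0,\ v\perp i\phi,\ \partial_x\phi\}.
\]
Take the decomposition $H^1=N+Z+P$ of Theorem \ref{thm*}. The negative directions $\psi_\xi$ from Step 1 exhaust $N$, since their number equals $n(S'')$. A Lagrange-multiplier/min--max argument then shows that a vector $v\in T$ with $\langle S''v,v\rangle\le 0$ would produce an $(n+1)$-dimensional non-positive subspace transverse to $Z$, which is impossible; compactness of the relevant perturbation upgrades non-negativity to the $H^1$ bound $\gtrsim\|v\|_{H^1}^2$. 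With coercivity in hand, use the Lyapunov functional argument of Weinstein \cite{We85}, as in Theorem \ref{sta_from_cowu}:
\begin{itemize}
\item modulate $u(t)=e^{i\theta}\phi_{\omega',c'}(\cdot-y)+\varepsilon$, choosing $\theta,y$ by the implicit function theorem to enforce orthogonality to $i\phi$ and $\partial_x\phi$;
\item use non-degeneracy of $d''$ (invertibility of $(\omega,c)\mapsto(M,P)(\phi_{\omega,c})$) to choose $(\omega',c')$ with $M(\phi_{\omega',c'})=M(u_0)$ and $P(\phi_{\omega',c'})=P(u_0)$;
\item expand the conserved quantity $S_{\omega',c'}(u)-S_{\omega',c'}(\phi_{\omega',c'})$ to second order and apply the coercivity bound.
\end{itemize}

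\textbf{Step 3: instability when $n-p(d'')$ is odd.} Here I would follow GSS. Restricted to the constraint set, $S''$ has an odd number of negative directions, and a parity/degree argument produces a direction $\chi\in T$ with $\langle S''\chi,\chi\rangle<0$. From it, build a curve of initial data on the level set of $(M,P)$ along which $S_{\omega,c}$ strictly decreases. Then use the virial-type functional $A(u)=\langle u,J^{-1}y\rangle$, with $y$ obtained from $\chi$ and localized in the modulated frame, whose time derivative is bounded below by the energy gap $S_{\omega,c}(\phi)-S_{\omega,c}(u)>0$. This forces exit from any tubular neighbourhood of the soliton orbit.

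I expect this last step to be the main obstacle, specifically controlling the modulation parameters so that $\frac{d}{dt}A$ keeps a sign. Since the skew operator here is $J=i$, invertible on $L^2$, the standard GSS construction should apply verbatim; the only check specific to \eqref{eq1} is that $b|u|^pu$ keeps $E$ of class $C^2$ on $H^1$, which holds for $p\ge4$ by the embedding $H^1\subset L^\infty$.
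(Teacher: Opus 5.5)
The paper gives no proof of this statement; it is quoted from Grillakis--Shatah--Strauss. Reconstructing it inside the GSS framework is therefore the natural route, and your Steps 1 and 2 are sound. The identity $\langle S''\psi_\xi,\psi_\xi\rangle=-\xi^{T}d''\xi$ (using the symmetry $\partial_cM=\partial_\omega P$) is correct, as is the min--max count. So is the observation that a nonzero $v\in T$ with $\langle S''v,v\rangle\le 0$ would, together with the $\psi_\xi$, span an $(n+1)$-dimensional non-positive subspace meeting $Z$ trivially. There are two minor points:
\begin{itemize}
\item Uniqueness of $\varphi$ alone does not give $C^1$ dependence of $\phi_{\omega,c}$ on $(\omega,c)$ in $H^1$. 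You need the implicit function theorem, using that $L_b$ is invertible on even functions; GSS simply take this as an assumption.
\item The $\psi_\xi$ do not ``exhaust $N$''. They only span some $n$-dimensional negative subspace, which is all your argument actually uses.
\end{itemize}

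The genuine gap is Step 3 for a general odd difference.
\begin{itemize}
\item When $d''$ is non-degenerate, the negative index of $S''$ restricted to $T$ is exactly $n(S'')-p(d'')$. So a $\chi\in T$ with $\langle S''\chi,\chi\rangle<0$ exists as soon as the difference is positive; parity plays no role in finding it.
\item What the Lyapunov/virial mechanism needs is a key inequality, schematically $S_{\omega,c}(\phi_{\omega,c})<S_{\omega,c}(u)+s(u)\,\frac{d}{dt}A(u)$, for all $u$ near the orbit on the level set of $(M,P)$.
\item The proof of that inequality uses that $S''$ is positive on the $S''$-orthogonal complement of $\chi$ inside $T$, i.e.\ that $n(S'')-p(d'')=1$.
\item If the difference is $3,5,\dots$, the remaining negative directions in $T$ are not controlled by the single functional $A$, and the lower bound on $\frac{d}{dt}A$ fails.
\end{itemize}
What oddness actually buys, through the eigenvalue count for $JS''$, is a real positive eigenvalue of the linearized operator, i.e.\ linear instability. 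Passing from that to orbital instability needs a separate linear-to-nonlinear instability argument, not the virial functional.

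So as written, your Step 3 proves (ii) only when $n(S'')-p(d'')=1$. For this paper that is enough: Corollary \ref{coro_negative_part} gives $n(S'')=1$, so the only odd case is $p(d'')=0$, exactly as used in Theorem \ref{sta_by_Gr_Sh_St}. For the statement as written, however, you should either restrict (ii) to that case or replace Step 3 by the eigenvalue count plus a genuine nonlinear-instability theorem.
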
 

Combining Corollary \ref{coro_negative_part} and Theorem \ref{Gr_Sh_St}, we have the following result.
\begin{theorem}\label{sta_by_Gr_Sh_St}
The solitary wave $u_{\omega,c}(t,x)=e^{i\omega t}\phi_{\omega,c}(x-ct)$ of \eqref{eq1} is orbitally stable if $p(d'')=1$ and orbitally unstable if $p(d'')=0$. 
\end{theorem}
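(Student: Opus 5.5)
The argument combines Corollary \ref{coro_negative_part} with Theorem \ref{Gr_Sh_St}: the first fixes $n(S_{\omega,c}''(\phi_{\omega,c}))=1$ for every $b\ge 0$ and admissible $(\omega,c)$, and the second gives $p(d'')\le 1$. Since $p(d'')$ only takes the values $0$ or $1$, there are two cases, matching the two conclusions of the theorem.

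\textbf{Case $p(d'')=1$.} The goal is part (i) of Theorem \ref{Gr_Sh_St}, since then $p(d'')=n(S_{\omega,c}''(\phi_{\omega,c}))$. That part requires $d$ to be non-degenerate at $(\omega,c)$. If non-degeneracy is already part of the hypotheses in the form used there, stability follows at once. If it is not, and $\det d''(\omega,c)=0$ is allowed, I would argue directly instead. When $p(d'')=1$, the symmetric matrix $d''$ has a positive eigenvalue, so there is a vector $(1,\mu)$, or $(0,1)$, with $\langle d''(1,\mu),(1,\mu)\rangle>0$. Set
\[
\psi=\partial_\omega\phi_{\omega,c}+\mu\partial_c\phi_{\omega,c}.
\]
The identity in Remark \ref{rm:2.9} then gives $\langle S''\psi,\psi\rangle<0$, and the Remark together with Theorem \ref{sta_from_cowu} yields orbital stability. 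One technical point: the formula in Remark \ref{rm:2.9} should be read with $d''$ symmetric, that is $\partial_cM=\partial_\omega P$, which holds because $d''$ is the Hessian of $d$. This route avoids needing non-degeneracy in the stable case.

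\textbf{Case $p(d'')=0$.} Here $n-p(d'')=1$ is odd, so part (ii) of Theorem \ref{Gr_Sh_St} gives orbital instability, again provided $d$ is non-degenerate.

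\textbf{Expected obstacle.} The main difficulty is the non-degeneracy assumption that Theorem \ref{Gr_Sh_St} needs. In the unstable case it cannot be removed by the trick used in the stable case. I would therefore take $\det d''\neq 0$ as the implicit standing hypothesis of the statement, as in \cite{GrShSt90}, where $p(d'')$ is defined only for non-degenerate $d$.
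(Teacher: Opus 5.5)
Your proposal is correct and follows the paper's route: the paper's entire proof is the one-line combination of Corollary \ref{coro_negative_part} ($n(S_{\omega,c}''(\phi_{\omega,c}))=1$) with Theorem \ref{Gr_Sh_St}, and it silently inherits that theorem's non-degeneracy hypothesis, which you rightly make explicit as a standing assumption for the unstable case. Your extra argument for the stable case via Remark \ref{rm:2.9} and Theorem \ref{sta_from_cowu} is valid and removes non-degeneracy there, which the paper does not do; two simplifications apply. First, the open cone $\{\xi:\langle d''\xi,\xi\rangle>0\}$ always contains a vector of the form $(1,\mu)$, so the $(0,1)$ alternative is unnecessary. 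Second, the identity in the Remark is exactly $-\langle d''(1,\mu),(1,\mu)\rangle$, so no symmetry of $d''$ is needed.
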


Inspiring by the results in \cite[Theorem 3]{CoOh06}, we obtain following result.
\begin{theorem}
Let $(\omega,c)$ be an admissible pair. Assume that there exists $\xi\in\R^2$ such that
\[
\left<d'(\omega,c),\xi\right>\neq 0,\quad\left<d''(w,c)\xi,\xi\right>>0,
\]
where $\left<,\right>$ denotes the inner product in $\R^2$ and $d(\omega,c)$ is defined as in Theorem \ref{Gr_Sh_St}. Then the solitary wave $u_{\omega,c}(t,x)=e^{i\omega t}\phi_{\omega,c}(x-ct)$ is orbitally stable.
\end{theorem}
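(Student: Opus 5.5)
The plan is to reduce the statement to Theorem \ref{sta_by_Gr_Sh_St}: it suffices to show that $p(d''(\omega,c))=1$, with $d$ non-degenerate at $(\omega,c)$. By Theorem \ref{Gr_Sh_St} and Corollary \ref{coro_negative_part}, $p(d'')\le n(S''_{\omega,c}(\phi_{\omega,c}))=1$, so only $p(d'')\ge 1$ remains. First I will note that $d''$ is symmetric. Indeed $d(\omega,c)=S_{\omega,c}(\phi_{\omega,c})$, and since $\phi_{\omega,c}$ is a critical point of $S_{\omega,c}$ we get $\partial_\omega d=M(\phi_{\omega,c})$ and $\partial_c d=P(\phi_{\omega,c})$. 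Hence $d''$ is the Hessian of $d$, with $\partial_cM=\partial_\omega P$. Here $d'(\omega,c)=(M(\phi_{\omega,c}),P(\phi_{\omega,c}))$.

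Next, the hypothesis $\langle d''(\omega,c)\xi,\xi\rangle>0$ says that the symmetric $2\times2$ matrix $d''$ is positive on the vector $\xi$. By the min-max characterization of eigenvalues, its largest eigenvalue is $\ge \langle d''\xi,\xi\rangle/|\xi|^2>0$, so $p(d'')\ge 1$. Combined with the upper bound, $p(d'')=1=n(S''_{\omega,c}(\phi_{\omega,c}))$.

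The remaining issue, which I expect to be the main obstacle, is non-degeneracy, i.e.\ $\det d''\neq 0$. Theorem \ref{Gr_Sh_St} requires it, and it is not implied by the hypothesis. The condition $\langle d',\xi\rangle\ne0$ is exactly what lets one avoid it, following \cite[Theorem 3]{CoOh06}. Rather than invoking Theorem \ref{sta_by_Gr_Sh_St} directly, I would argue through the coercivity criterion of Theorem \ref{sta_from_cowu} and Remark \ref{rm:2.9}. Write $\xi=(\xi_1,\xi_2)$ and set $\psi=\xi_1\partial_\omega\phi_{\omega,c}+\xi_2\partial_c\phi_{\omega,c}$.

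Differentiating the profile equation $S'_{\omega,c}(\phi_{\omega,c})=0$ in $\omega$ and $c$ gives
\[
S''_{\omega,c}(\phi_{\omega,c})\partial_\omega\phi_{\omega,c}=-M'(\phi_{\omega,c}),\qquad S''_{\omega,c}(\phi_{\omega,c})\partial_c\phi_{\omega,c}=-P'(\phi_{\omega,c}).
\]
Therefore $\langle S''\psi,\psi\rangle=-\langle d''\xi,\xi\rangle<0$, which is the identity recorded in Remark \ref{rm:2.9}, with general $\xi$ in place of $(1,\mu)$. Since $n(S'')=1$ and $\mathrm{Ker}\,S''=\mathrm{Span}\{i\phi_{\omega,c},\partial_x\phi_{\omega,c}\}$ (Theorem \ref{thm*}), the standard Weinstein argument yields coercivity of $S''$ on the $L^2$-orthogonal complement of $\{i\phi,\partial_x\phi,S''\psi\}$. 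Since $S''\psi=-(\xi_1M'(\phi)+\xi_2P'(\phi))$, the third orthogonality is $(\varepsilon,\xi_1\phi+i\xi_2\partial_x\phi)_2=0$ (up to a constant).

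Orbital stability then follows as in Theorem \ref{sta_from_cowu}. Modulation in phase and translation supplies the first two orthogonalities. For the third, the constraint manifold is $\{\xi_1 M+\xi_2 P=\text{const}\}$, and conservation of $M$ and $P$ controls $\xi_1 M(u)+\xi_2 P(u)$. Here $\langle d',\xi\rangle\ne0$ is exactly what makes this constraint transversal: it guarantees that $\xi_1M'(\phi)+\xi_2P'(\phi)\neq0$, so one can adjust $(\omega,c)$ along $\xi$, or equivalently rescale, to absorb the remaining direction. This yields the Weinstein-type Lyapunov estimate without needing $\det d''\ne0$.

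When $\xi_1\neq 0$, I would normalize to $\xi=(1,\mu)$, which is precisely the form in Remark \ref{rm:2.9}. The case $\xi_1=0$ is handled the same way using $\partial_c\phi$ alone.
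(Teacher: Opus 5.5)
Your argument is correct, but it takes a different route from the paper. The paper transplants the variational proof of \cite[Theorem 3]{CoOh06}, replacing the functionals there by $\tilde L_{\omega,c}$ and $\tilde N$. In that proof, $\phi_{\omega,c}$ is characterized as a minimizer of $S_{\omega,c}$ on a Nehari-type constraint. Stability then follows from a Shatah-type argument: nearby functions are placed on the constraint by moving the parameters along $\xi$, and the transversality needed for this is $\langle d'(\omega,c),\xi\rangle\neq0$. The condition $\langle d''\xi,\xi\rangle>0$ supplies convexity of $\tau\mapsto d((\omega,c)+\tau\xi)$. That route does not need the spectral information of Theorem \ref{thm*}.

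You instead work with the linearized operator. Granting Theorem \ref{thm*}, the direction $\psi=\xi_1\partial_\omega\phi+\xi_2\partial_c\phi$ satisfies $\langle S''\psi,\psi\rangle=-\langle d''\xi,\xi\rangle<0$. This gives coercivity on $\{i\phi,\partial_x\phi,S''\psi\}^{\perp}$, and then Weinstein's Lyapunov argument applies. For $\xi_1\neq0$ this is exactly Remark \ref{rm:2.9} with $\mu=\xi_2/\xi_1$, fed into Theorem \ref{sta_from_cowu}. For $\xi_1=0$ the hypothesis reads $\partial_cP(\phi_{\omega,c})>0$, which Remark \ref{rm:2.9} also covers. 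Your route is shorter once the spectral facts are available. However, those facts are the delicate input when $b\neq0$: one negative eigenvalue, a kernel spanned exactly by $i\phi,\partial_x\phi$, and a positive part bounded below. The paper only asserts them by analogy with \cite{LiSiSu13}, whereas the Colin--Ohta route trades them for the variational characterization.

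One correction to your explanation. In your route, $\langle d',\xi\rangle\neq0$ is not what circumvents $\det d''\neq0$; in fact it is not needed at all. First, $S''\psi\neq0$ already follows from $\langle S''\psi,\psi\rangle<0$. Second, by conservation of $\xi_1M+\xi_2P$, the quantity $|(\varepsilon,S''\psi)_2|$ is bounded by the initial discrepancy $|(\xi_1M+\xi_2P)(u_0)-(\xi_1M+\xi_2P)(\phi_{\omega,c})|$ plus $O(\|\varepsilon\|_{H^1}^2)$, so no adjustment of $(\omega,c)$ is required. Non-degeneracy is bypassed simply because $n(S'')=1$, so a single negative direction controlled by one conserved quantity suffices. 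The ``adjust $(\omega,c)$ along $\xi$'' mechanism you invoke belongs to the variational proof, not to yours.
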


\begin{proof}
The proof follows a similar argument to that of \cite[Theorem 3]{CoOh06}, with the operators $L_{\omega,c}$ and $N$ replaced by $\tilde{L}_{\omega,c}$ and $\tilde{N}$, respectively, where
\begin{align*}
\tilde{N}(u)&=N(u)+\frac{2b(p-2)}{p+2}\int_{\R}|u|^{p+2}dx,\\
\tilde{L}_{\omega,c}(u)&=L_{\omega,c}(u)+\frac{b(p-2)}{p+2}\int_{\R}|u|^{p+2}dx.
\end{align*}
We omit the details here.
\end{proof}

As a consequence of the above theorem, we have the following result.
\begin{corollary}\label{coro**}
Let $(\omega,c)$ be an admissible pair. Assume that det$[d''(w,c)]<0$ or $\partial_{\omega}^2d(\omega,c)=\partial_{\omega}M(\phi_{\omega,c})>0$. Then the solitary wave $u_{\omega,c}$ is orbitally stable. 
\end{corollary}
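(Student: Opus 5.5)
We derive Corollary \ref{coro**} from the preceding theorem by exhibiting, in each of the two cases, a vector $\xi\in\R^2$ with $\langle d'(\omega,c),\xi\rangle\neq 0$ and $\langle d''(\omega,c)\xi,\xi\rangle>0$. Two preliminary facts are needed. First, since $\phi_{\omega,c}$ is a critical point of $S_{\omega,c}=E+\omega M+cP$, differentiating $d(\omega,c)=S_{\omega,c}(\phi_{\omega,c})$ gives
\[
d'(\omega,c)=\bigl(M(\phi_{\omega,c}),P(\phi_{\omega,c})\bigr),
\]
so $d''$ is the matrix of Theorem \ref{Gr_Sh_St}. Second, $d''$ is symmetric as a Hessian, so $\partial_cM=\partial_\omega P$. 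In particular $\partial_\omega^2 d=\partial_\omega M(\phi_{\omega,c})$, and $M(\phi_{\omega,c})=\frac12\|\varphi_{\omega,c}\|_{L^2}^2>0$ because $\phi_{\omega,c}\neq 0$.

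\textbf{Case $\partial_\omega M(\phi_{\omega,c})>0$.} Take $\xi=(1,0)$. Then $\langle d''\xi,\xi\rangle=\partial_\omega M(\phi_{\omega,c})>0$ and $\langle d',\xi\rangle=M(\phi_{\omega,c})>0$, so the theorem gives stability.

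\textbf{Case $\det d''<0$.} The symmetric matrix $d''$ has one positive and one negative eigenvalue. The quadratic form $q(\xi)=\langle d''\xi,\xi\rangle$ is therefore indefinite, and the open cone $\{q>0\}$ is nonempty and consists of two opposite sectors. The linear form $\ell(\xi)=\langle d',\xi\rangle$ vanishes only on a line, since $d'\neq 0$ by $M>0$. This line cannot contain all of the open set $\{q>0\}$. Hence some $\xi$ satisfies $q(\xi)>0$ and $\ell(\xi)\neq 0$, and the theorem applies again.

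The only delicate point is the identification $d'=(M,P)$. This requires $\omega\mapsto\phi_{\omega,c}$ and $c\mapsto\phi_{\omega,c}$ to be $C^1$ into $H^1$, so that the chain rule $\partial_\omega d=\langle S'_{\omega,c}(\phi_{\omega,c}),\partial_\omega\phi_{\omega,c}\rangle+M(\phi_{\omega,c})=M(\phi_{\omega,c})$ is justified, and similarly for $c$. This smoothness is taken for granted in the preceding theorem and in Theorem \ref{Gr_Sh_St}. For $a>0$ it follows from the nondegeneracy of the linearized operator on even functions, via the implicit function theorem, as in the proof of Lemma \ref{lm2.5}.
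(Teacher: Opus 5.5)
Your proposal is correct and follows essentially the route the paper intends. The paper gives no separate proof and simply derives the corollary from the preceding Colin--Ohta-type theorem, as in \cite{CoOh06}: take $\xi=(1,0)$ when $\partial_\omega M(\phi_{\omega,c})>0$, and when $\det d''<0$ take a vector in the open positive cone of $d''$ lying off the line $\langle d'(\omega,c),\xi\rangle=0$, using $d'=(M,P)$ with $M>0$ and the symmetry of $d''$ exactly as you do.
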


Combining the above result with Lemma \ref{lm2.5}, we obtain the proof of Theorem \ref{thm:main_1}. Similarly, Theorem \ref{thm:main_2} follows from Lemma \ref{lm2.6} and Corollary \ref{coro**}.
%\begin{theorem}\label{thm:main_1}
%Let $p\geq 4$ and $(\omega,c)$ be an admissible pair. Then, there exists $b_0=b_0(\omega,c)$ sufficiently small such that for all $b\in (0,b_0)$, the solitary wave $u_{\omega,c}(t,x)=e^{i\omega t}\phi_{\omega,c}(x-ct)$ of \eqref{eq1} is orbitally stable.
%\end{theorem}

\begin{proof}[Proof of Theorem \ref{thm:main_1}]
%If $c<0$ then $\int_{\R}\partial_{\omega}\varphi_0 \varphi_0 dx=\partial_{\omega}M(\varphi_0)>0$ (see e.g \cite[Remark 3]{CoOh06}). From Lemma \ref{lm2.5}, this implies that 
%\[
%\partial_{\omega}M(\phi_{\omega,c})=\partial_{\omega}M(\varphi_{\omega,c})>0,
%\]
%provided that $b>0$ sufficiently small. Thus, from Corollary \ref{coro**}, $u_{\omega,c}$ is orbitally stable.\\
%If $c\ge 0$ then
We have
\[
 \text{det}(d''(\omega,c,0)):=\partial_{\omega}M(\phi_0)\partial_cP(\phi_0)-\partial_{\omega}P(\phi_0)\partial_cM(\phi_0)=\frac{-1}{\omega}<0,
\] 
where $\phi_0(x)=\varphi_0(x)\exp\left(\frac{c}{2}ix-\frac{i}{4}\int_{-\infty}^x |\varphi_0(y)|^2dy\right)$. For the proof, we refer the reader to \cite{CoOh06}. \\
Moreover,
\[
P(\phi_{\omega,c})=\frac{-c}{4}\int_{\R}\varphi_{\omega,c}^2dx+\frac{1}{8}\int_{\R}\varphi_{\omega,c}^4 dx.
\] 
Thus,
\begin{align*}
\partial_{\omega}P(\phi_{\omega,c})&=\frac{-c}{2}\int_{\R}\partial_{\omega}\varphi_{\omega,c} \varphi_{\omega,c} dx+\frac{1}{2}\int_{\R}\partial_{\omega}\varphi_{\omega,c}\varphi_{\omega,c}^3 dx,\\
\partial_c P(\phi_{\omega,c})&=\frac{-c}{2}\int_{\R}\partial_c\varphi_{\omega,c} \varphi_{\omega,c} dx+\frac{1}{2}\int_{\R}\partial_c\varphi_{\omega,c}\varphi_{\omega,c}^3 dx-\frac{1}{4}\int_{\R}\varphi_{\omega,c}^2dx.
\end{align*}
Hence, by Lemma \ref{lm2.5}, we have
\[
\text{det}(d''(\omega,c)):=\partial_{\omega}M(\phi_{\omega,c})\partial_cP(\phi_{\omega,c})-\partial_{\omega}P(\phi_{\omega,c})\partial_cM(\phi_{\omega,c})=\text{det}(d''(\omega,c,0))+O(\sqrt[4]{b})<0,
\]
provided that $b$ is sufficiently small. This implies that the solitary wave $u_{\omega,c}$ of \eqref{eq1} is orbitally stable by Corollary \ref{coro**}.

\end{proof}

%\begin{theorem}\label{thm:main_2}
%Let $b\geq 0$ and $(\omega,c)$ be an admissible pair such that $-2\sqrt{\omega}<c<2s^*\sqrt{\omega}$ and $k_0=\frac{1}{\gamma}\left(2c+2\sqrt{c^2+4a\gamma}\right)>e^{\frac{1}{3}}$. Then, there exists $p_0=p_0(\omega,c)$ sufficiently close to $4$ such that for all $p \in [4,p_0)$, the solitary wave $u_{\omega,c}(t,x)=e^{i\omega t}\phi_{\omega,c}(x-ct)$ of \eqref{eq1} is orbitally stable.
%\end{theorem}

\section{Scattering theory}\label{sec:scattering theory}

%check more
%In the case $b=0$, the authors \cite{BaWuXu20} proved that the solution is global and scatters for small initial data in $H^s(\R)$, $\frac{1}{2}\leq s\leq 1$ when $\sigma\ge 2$, while the opposite result fails to hold when $0<\sigma<2$. When $b\neq 0$ and $p\geq 4$, the same result holds and the scattering solutions are difficult to obtain. More precisely, we have the following result.
%\begin{lemma}
%\label{lm:scattering_theory_1}
%Let $(\omega, c)$ be such that $4\omega>c^2$. Let $\varphi_{\omega,c}$ be the positive, even solution of \eqref{eqofvarphi} and $\phi_{\omega,c}$ be defined in \eqref{eq:varphi_change to_phi}, then
%\[
%\|\phi_{\omega,c}\|_{H^1(\mathbb{R})} \to 0,\quad  \text{when } c \to -2\sqrt{\omega}.
%\]
%\end{lemma}
In this section, we sketch the proof of Lemmas \ref{lm:scattering_theory_1}, \ref{thm:modified scattering operator} and \ref{thm:non exists non trivial scattering solutions}.

\begin{proof}[Proof of Lemma \ref{lm:scattering_theory_1}]
Throughout the proof, we fix $b,\omega>0$ and let $c\rightarrow -2\sqrt{\omega}$. Note that $a\rightarrow 0$ when $c\rightarrow -2\sqrt{\omega}$. It suffices to prove $\|\varphi_{\omega,c}\|_{H^1}$ goes to zero. For convenience, we omit the subscript $\omega,c$. Multiplying both sides of \eqref{eqofvarphi} by $\varphi$ and integrating over $\R$ yields:
\[
\norm{\varphi_x}^2_{L^2}+\left(\omega-\frac{c^2}{4}\right)\norm{\varphi}^2_{L^2}+\frac{c}{2}\norm{\varphi}^4_{L^4}-\frac{3}{16}\norm{\varphi}^6_{L^6}-b\norm{\varphi}^{p+2}_{L^{p+2}}=0.
\]
Thus, we only need prove that
\[
\norm{\varphi}_{L^2}\rightarrow 0,\quad \norm{\varphi}_{L^{\infty}}\rightarrow 0.
\]
For this purpose, we use the formula of $\varphi$ \eqref{eqofvarphi}. We have
\begin{align*}
w(0)&=\frac{k_0}{C_0 k_b}+\frac{C_0-1}{C_0},
\end{align*}
where,
\begin{align*}
C_0&=\frac{y_+(y_+ - y_{-})}{32a}=\frac{\sqrt{c^2+4a}}{\sqrt{c^2+4a}-c}\rightarrow \frac{1}{4}.
\end{align*}
It follows that
\[
w(0)\approx \frac{4k_0}{k_b}-3=1+4\left(\frac{k_0-k_b}{k_b}\right)=1+O(k_0^{p/2})\rightarrow 1.
\]
Thus, 
\begin{align*}
\norm{\varphi}^2_{L^2}&=\int_{\R}\varphi^2(x)dx=2\int_0^{\infty}\varphi^2(x)dx\leq \frac{16a}{|c|}\int_0^{\infty}\frac{1}{\cosh(2\sqrt{a}x)+1}dx=\frac{8\sqrt{a}}{|c|}\int_0^{\infty}\frac{1}{\cosh(x)+1}dx, 
\end{align*}
which tends to zero when $c\rightarrow -2\sqrt{\omega}$. \\
Moreover,
\begin{align*}
\norm{\varphi}_{L^{\infty}}&=\varphi(0)=k_b<k_0=y_+=\frac{8a}{\sqrt{c^2+4a}-c},
\end{align*}
which goes to zero, and consequently, so does $\norm{\varphi}_{L^{\infty}}$. The proof is complete.
\end{proof}

\begin{proof}[Proof of Theorem \ref{thm:modified scattering operator}]
The proof is similar to \cite{HaOz94}. We only sketch this here. Let $\psi=\mathcal{G}u$. For each solution $u$ to \eqref{eq1}, $\psi$ satisfies
\[
i\psi_t+\psi_{xx}+i(|\psi|^2\psi)_x+b|\psi|^p\psi=0.
\]
For given $\phi^+$, we let
\[
v_+(t) = \frac{1}{\sqrt{2it}} \exp \left( \frac{ix^2}{4t} + iS^{+}(t, x) \right) \hat{\phi}_{+} \left( \frac{x}{2t} \right),
\]
Noting that the perturbation $|v_+|^pv_+$ is small i.e
\[
\norm{|v_+|^pv_+}_{2,0}\leq C\norm{v_+}_{2,0}\norm{v_+}_{L^{\infty}}^p+\norm{v_+}^{p-1}_{L^{\infty}}\norm{\partial_xv_+}_{L^{\infty}}\norm{\partial_xv_+}_{L^2}\leq C |t|^{-\frac{p}{2}},
\]
hence, from \cite{HaOz94}, $v_+$ satisfies
\[
i\partial_tv_+ +\partial_x^2v_+  + i(|v_+|^2 v_+)_x+b|v_+|^pv_+ =F_+(t),
\]
\[
\norm{F_+}_{2,0}\leq C|t|^{-2}(\ln|t|)^2\quad \text{ for any } t\geq e.
\]
Consider the Gauge transformation:
\[
w^{(1)}(t,x)=\exp\left(i\int_{-\infty}^x|\psi(t,y)|^2dy\right)\psi(t,x),
\]
\[
w^{(2)}(t,x)=\exp\left(i\int_{-\infty}^x|\psi(t,y)|^2dy\right)\left(\psi_x+\frac{i}{2}|\psi|^2\psi\right),
\]
\[
w^{(1)}_+(t,x)=\exp\left(i\int_{-\infty}^x|v_+(t,y)|^2dy\right)v_+(t,x),
\]
\[
w^{(2)}_+(t,x)=\exp\left(i\int_{-\infty}^x|v_+(t,y)|^2dy\right)\left(\partial_xv_{+}+\frac{i}{2}|v_+|^2v_+\right).
\]
The above functions satisfy the following systems
\begin{equation}\label{eq:syst1}
\begin{cases}
Lw^{(1)}=i|w^{(1)}|^2\overline{w^{(2)}}-b|w^{(1)}|^pw^{(1)}:=P(w^{(1)},w^{(2)}),\\
Lw^{(2)}=-i(w^{(2)})^2\overline{w^{(1)}}-b\left(\frac{p}{2}+1\right)|w^{(1)}|^pw^{(2)}-\frac{bp}{2}|w^{(1)}|^{p-2}(w^{(1)})^2\overline{w^{(2)}}:=Q(w^{(1)},w^{(2)}).
\end{cases}
\end{equation}
and
\begin{equation}\label{eq:syst2}
\begin{cases}
Lw^{(1)}_+ =P(w^{(1)}_+,w^{(2)}_+)+F_1^+,\\
Lw^{(2)}_+=Q(w^{(1)}_+,w^{(2)}_+)+F_2^+,
\end{cases}
\end{equation}
where $F_1^+$ and $F_2^+$ satisfy that
\[
F_1^+=F_+\exp\left(i\int_{-\infty}^x|v_+(t,y)|^2dy\right)-2w^1_+\int_{-\infty}^x\Im(F_+\overline{v_+})dy,
\]
\[
F_2^+=-2w^{(2)}_+\int_{-\infty}^x \Im(F_+ \overline{v}_+)dy+\exp\left(i\int_{-\infty}^x|v_+(t,y)|^2dy\right)\left(\partial_xF_+ + i|v_+|^2F_+ -\frac{i}{2}v^2_+ \overline{F}_+\right)
\]
Therefore, it is easy to verify that
\[
\norm{F_1^+}_{1,0}+\norm{F_2^+}_{1,0}\leq Ct^{-2}(\ln t)^2,\quad \forall t\geq e.
\]
Let $\eta=\begin{pmatrix} w^{(1)} \\ w^{(2)} \end{pmatrix}-\begin{pmatrix} w^{(1)}_+ \\ w^{(2)}_+ \end{pmatrix}$, $\mathcal{W}=\begin{pmatrix} w^{(1)}_+ \\ w^{(2)}_+ \end{pmatrix}$, $\mathcal{K}=\begin{pmatrix} P\\ Q\end{pmatrix}$, $\mathcal{F}=\begin{pmatrix} -F_1^+\\ -F_2^+\end{pmatrix} $. From the systems \eqref{eq:syst1} and \eqref{eq:syst2}, we have
\begin{equation}\label{eq:eta_0}
L\eta = \mathcal{K}(\eta+\mathcal{W})-\mathcal{K}(\mathcal{W})+\mathcal{K}.
\end{equation}
We find a solution $\eta$ of the following asymptotic problem
\begin{equation}
\label{eq:eta}
\eta= i\int_t^{\infty} U(t-\tau)(\mathcal{K}(\eta+\mathcal{W})-\mathcal{K}(\mathcal{W})+\mathcal{F})(\tau)d\tau:=\Phi(\eta).
\end{equation}
For convenience, for any $f=(f_1,f_2)$, we denote $|f|=|f_1|+|f_2|$. Let $\alpha\in (1/2,1)$.\\
Let
$$
\begin{aligned}
X_{T,M} = \Big\{ \eta \in C([T,\infty),H^{1,0}\times H^{1,0}) &\cap L^4((T,\infty),W^{1,\infty}\times W^{1,\infty}) \\
&\Bigm\vert \sup_{t\geq T}t^{\alpha}\left(\norm{\eta}_{L^{\infty}((t,\infty),H^{1,0}\times H^{1,0})}+\norm{\eta}_{L^4((t,\infty),W^{1,\infty}\times W^{1,\infty})}\right) \leq M \Big\},
\end{aligned}
$$
which is established the metric:
\[
d(\eta,\tilde{\eta})=\sup_{t\geq T}t^{\alpha}\left(\norm{\eta-\tilde{\eta}}_{L^{\infty}((t,\infty),H^{1,0}\times H^{1,0})}+\norm{\eta-\tilde{\eta}}_{L^4((t,\infty),W^{1,\infty}\times W^{1,\infty})}\right).
\]
For any $\eta_1,\eta_2\in X_{T,M}$, we have
\[
\left|\mathcal{K}(\eta_1)-\mathcal{K}(\eta_2)\right|\lesssim |\eta_1-\eta_2|(|(\eta_1,\eta_2)|^2+|(\eta_1,\eta_2)|^p),
\]
and
\begin{align*}
\left|\partial_x\mathcal{K}(\eta_1)-\partial_x\mathcal{K}(\eta_2)\right|&\lesssim |\partial_x\eta_1-\partial_x\eta_2|(|(\eta_1,\eta_2)|^2+|(\eta_1,\eta_2)|^p)\\
& \quad +|\eta_1-\eta_2|(|(\eta_1,\eta_2)|+|(\eta_1,\eta_2)|^{p-1})||(\partial_x\eta_1,\partial_x\eta_2)|.
\end{align*}
Therefore, by Strichartz's estimates and Hölder inequality, we have
\begin{align*}
&\norm{\Phi(\eta_1)-\Phi(\eta_2)}_{L^{\infty}((t,\infty),H^{1,0}\times H^{1,0})\cap L^4((t,\infty),W^{1,\infty}\times W^{1,\infty})}\\
&\leq C\norm{\mathcal{K}(\eta_1+\mathcal{W})-\mathcal{K}(\eta_2+\mathcal{W})}_{L^1((t,\infty),H^{1,0}\times H^{1,0})}\\
&\leq C\norm{\mathcal{K}(\eta_1+\mathcal{W})-\mathcal{K}(\eta_2+\mathcal{W})}_{L^1((t,\infty),L^2)}+C\norm{\partial_x\mathcal{K}(\eta_1+\mathcal{W})-\partial_x\mathcal{K}(\eta_2+\mathcal{W})}_{L^1((t,\infty),L^2)}\\
&\leq C\int_t^{\infty} (\norm{\eta_1-\eta_2}_{L^2}+\norm{\partial_x\eta_1-\partial_x\eta_2}_{L^2})(\norm{(\eta_1,\eta_2,\mathcal{W})}_{L^{\infty}}^2+\norm{(\eta_1,\eta_2,\mathcal{W})}_{L^{\infty}}^p)\\
&\quad +C\norm{\eta_1-\eta_2}_{L^{\infty}}\norm{(\partial_x\eta_1,\partial_x\eta_2,\partial_x\mathcal{W})}_{L^2}(\norm{(\eta_1,\eta_2,\mathcal{W})}_{L^{\infty}}+\norm{(\eta_1,\eta_2,\mathcal{W})}_{L^{\infty}}^{p-1})d\tau\\
&\leq C(\norm{(\eta_1,\eta_2)}_{L^4((t,\infty),L^{\infty})}^2+\norm{\mathcal{W}}_{L^4((t,\infty),L^{\infty})}^2)\norm{\eta_1-\eta_2}_{L^2((t,\infty),H^1)}\\
&\quad +C(\norm{(\eta_1,\eta_2)}_{L^4((t,\infty),L^{\infty})}^3+\norm{\mathcal{W}}_{L^4((t,\infty),L^{\infty})}^3)(\norm{(\eta_1,\eta_2)}_{L^{\infty}((t,\infty),L^{\infty})}^{p-4}+\norm{\mathcal{W}}_{L^{\infty}((t,\infty),L^{\infty})}^{p-4})\\
&\quad\norm{\eta_1-\eta_2}_{L^4((t,\infty),L^{\infty})}\norm{(\partial_x\eta_1,\partial_x\eta_2,\partial_x\mathcal{W})}_{L^{\infty}((t,\infty),L^2)}\\
&\leq Cd(\eta_1,\eta_2)\left\{M^2t^{-2\alpha}+t^{-\frac{1}{2}}\left[\norm{\phi_+}_{2,1}+t^{-1}\ln(t) M_2(\phi_+)\right]^2\right\}Mt^{\frac{1}{2}-\alpha}\\
&\quad +Cd(\eta_1,\eta_2)\left\{M^3 t^{-3\alpha}+t^{-\frac{3}{4}}\left[\norm{\phi_+}_{2,1}+t^{-1}\ln(t) M_2(\phi_+)\right]^3\right\}(M^{p-4}t^{-\alpha(p-4)}+t^{-\frac{p-4}{2}})Mt^{-\alpha}(Mt^{-\alpha}+C_0),
\end{align*}
where the last inequality follows from \cite[the proof of Theorem 1]{HaOz94} and the constant $C_0$ depends on $\norm{\phi_+}_{4,0}+\norm{\phi_+}_{0,4}$. \\
Moreover, by letting $\eta_2=0$ in the above inequalities, we obtain
\begin{align*}
&\norm{\Phi(\eta_1)}_{L^{\infty}((t,\infty),H^{1,0})\cap L^4((t,\infty),W^{1,\infty})}\\
&\leq C(\norm{\eta_1}_{L^4((t,\infty),L^{\infty})}^2+\norm{\mathcal{W}}_{L^4((t,\infty),L^{\infty})}^2)\norm{\eta_1}_{L^2((t,\infty),H^1)}\\
&\quad +C(\norm{\eta_1}_{L^4((t,\infty),L^{\infty})}^3+\norm{\mathcal{W}}_{L^4((t,\infty),L^{\infty})}^3)(\norm{\eta_1}_{L^{\infty}((t,\infty),L^{\infty})}^{p-4}+\norm{\mathcal{W}}_{L^{\infty}((t,\infty),L^{\infty})}^{p-4})\\
&\quad\norm{\eta_1}_{L^4((t,\infty),L^{\infty})}\norm{(\partial_x\eta_1,\partial_x\mathcal{W})}_{L^{\infty}((t,\infty),L^2)}\\
&\leq C\left\{M^2t^{-2\alpha}+t^{-\frac{1}{2}}\left[\norm{\phi_+}_{2,1}+t^{-1}\ln(t) M_2(\phi_+)\right]^2\right\}Mt^{\frac{1}{2}-\alpha}\\
&\quad +C\left\{M^3 t^{-3\alpha}+t^{-\frac{3}{4}}\left[\norm{\phi_+}_{2,1}+t^{-1}\ln(t) M_2(\phi_+)\right]^3\right\}(M^{p-4}t^{-\alpha(p-4)}+t^{-\frac{p-4}{2}})Mt^{-\alpha}(Mt^{-\alpha}+C_0)\\
&\leq Ct^{-\alpha}(M^2t^{\frac{1}{2}-2\alpha}+\norm{\phi_+}^2_{2,1}+t^{-2}(\ln t)^2(M_2(\phi_+))^2+t^{-\frac{3}{4}}).
\end{align*}
Therefore, for $t$ sufficiently large and $\norm{\phi_+}_{2,1}$ sufficiently small, $\Phi$ is a contraction map on $X_{T,M}$. It follows that there exists a unique solution $\eta$ of \eqref{eq:eta} such that
\[
\sup_{t\geq T}t^{\alpha}\left(\norm{\eta}_{L^{\infty}((t,\infty),H^{1,0}\times H^{1,0})}+\norm{\eta}_{L^4((t,\infty),W^{1,\infty}\times W^{1,\infty})}\right) \lesssim 1.
\]
The rest of the proof follows from \cite{HaOz94}. Thus, we obtain that there exists a solution $\psi$ such that 
\begin{align*}
    (\text{MW}_+) \quad &\left\| \psi(t) - \exp(iS^+(t)) U(t)\phi_+ \right\|_{2,0}  \\
    &+ \left( \int_t^\infty \left\|\psi(\tau) - \exp(iS^+(\tau)) U(\tau)\phi_+ \right\|_{W^{2,\infty}}^4 d\tau \right)^{1/4}  \\
    &= O(t^{-\alpha}) \quad \text{as} \quad t \to \infty, 
\end{align*}
Since $\mathcal{G}u=\psi$, the proof of $(MW_+)$ is complete. $(MW_{-})$ is proved similarly. This completes the proof of Theorem \ref{thm:modified scattering operator}. 
\end{proof}

%\begin{theorem} \label{thm:non exists non trivial scattering solutions}
%Let $p\geq 4$. We assume that $u(t, x)$ is a solution of \eqref{eq1} satisfying
%\begin{align*}
%\makebox[1.5cm][l]{(\text{A.1})} \hfill & u(t, x) \in C(\mathbb{R}; H^{1,0}) \hfill %\\
%\intertext{and}
%\makebox[1.5cm][l]{(\text{A.2})} \hfill & \|u(t)\|_{1,0} \le C(\|u(0)\|_{1,0}), \quad \|u(t)\|_{4} = O(|t|^{-\frac{1}{4}}) \quad \text{as} \quad t \to \infty. \hfill \\
%\intertext{Moreover, we assume that there exists $\phi_+ \in H^{4,0} \cap H^{0,4}$ such that}
%\makebox[1.5cm][l]{(\text{A.3})} \hfill & \|(\mathcal{G} u)(t) - U(t)\phi_+\|_{2} \to 0 \quad \text{as} \quad t \to \infty. \hfill
%\end{align*}
%Then $u(t, x)$ is identically zero.
%\end{theorem}

\begin{proof}[Proof of Theorem \ref{thm:non exists non trivial scattering solutions}]
Let $\psi=\mathcal{G}u$. From \cite[Theorem 1.2]{HaOz94}, $\psi$ is identically zero. This implies the desired result.
\end{proof}

\section{Appendix}\label{sec:appendix}
This section is devoted to introducing well-known results that are useful for proving the main results in the preceding sections.
\begin{lemma}(Taylor expansion)\label{Taylor expansion}
If $f$ is $n$-times continuously differentiable on $[a, b]$ and $(n+1)$-times differentiable on $(a, b)$, then:$$f(x) = f(x_0) + \frac{f'(x_0)}{1!}(x - x_0) + \dots + \frac{f^{(n)}(x_0)}{n!}(x - x_0)^n + \frac{f^{(n+1)}(c)}{(n+1)!}(x - x_0)^{n+1}$$(where $c$ lies between $x$ and $x_0$).
\end{lemma}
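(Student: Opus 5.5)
This is Taylor's theorem with the Lagrange form of the remainder, and I will prove it by the classical Rolle-type argument. Write $T_n(x)=\sum_{k=0}^n \frac{f^{(k)}(x_0)}{k!}(x-x_0)^k$ for the Taylor polynomial. Fix $x\neq x_0$ in $[a,b]$; the case $x=x_0$ is trivial. Since $(x-x_0)^{n+1}\neq 0$, there is a unique real number $M$ with
\[
f(x)=T_n(x)+\frac{M}{(n+1)!}(x-x_0)^{n+1}.
\]
The goal is then to show that $M=f^{(n+1)}(c)$ for some $c$ strictly between $x_0$ and $x$.

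To do this I introduce the auxiliary function in the variable $t$, with $x$ held fixed:
\[
g(t)=f(x)-\sum_{k=0}^n \frac{f^{(k)}(t)}{k!}(x-t)^k-\frac{M}{(n+1)!}(x-t)^{n+1}.
\]
This function is continuous on the closed interval between $x_0$ and $x$, because $f^{(n)}$ is continuous there. It is differentiable in the open interval, because $f^{(n)}$ is differentiable on $(a,b)$. Moreover $g(x)=0$ trivially, and $g(x_0)=0$ by the choice of $M$.

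The main computation is to differentiate $g$. The sum telescopes: the derivative of the $k$-th term produces $\frac{f^{(k+1)}(t)}{k!}(x-t)^k$, which cancels against the $-\frac{f^{(k)}(t)}{(k-1)!}(x-t)^{k-1}$ coming from the $(k+1)$-st term. Only the top-order contributions survive, giving
\[
g'(t)=-\frac{f^{(n+1)}(t)}{n!}(x-t)^n+\frac{M}{n!}(x-t)^n=\frac{(x-t)^n}{n!}\bigl(M-f^{(n+1)}(t)\bigr).
\]

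Finally, Rolle's theorem gives $c$ strictly between $x_0$ and $x$ with $g'(c)=0$. Since $c\neq x$, the factor $(x-c)^n$ is nonzero, so $M=f^{(n+1)}(c)$, which is the claimed formula. The only delicate point is keeping the regularity hypotheses aligned with what Rolle's theorem needs on the closed and open intervals, and verifying the telescoping sign by sign; neither is a real obstacle.
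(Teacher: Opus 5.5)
Your proof is correct: it is the classical Rolle's-theorem argument for Taylor's formula with Lagrange remainder, and the paper gives no proof of this lemma at all (it only quotes it in the appendix as a well-known fact), so there is nothing further to compare. One cosmetic index slip in the telescoping step: the term $\frac{f^{(k+1)}(t)}{k!}(x-t)^k$ cancels against $-\frac{f^{(k+1)}(t)}{k!}(x-t)^k$ from the derivative of the $(k+1)$-st term, whereas the piece $-\frac{f^{(k)}(t)}{(k-1)!}(x-t)^{k-1}$ you quote comes from the $k$-th term itself; your resulting formula for $g'(t)$ is nevertheless right.
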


Let $A$ be an bounded linear operator on a Banach space. We denote the spectrum of the operator $A$ by $\sigma(A)$. The spectral radius of $A$ is the supremum of the magnitudes of the elements of the spectrum:
\[
\rho(A)=\sup_{\lambda\in\sigma(A)}|\lambda|.
\]
We have the following result, which is known as Gelfand's formula.
\begin{lemma}
\label{lm:Gelfand formula}
For bounded linear operator $A$, its spectral radius is given by
\[
\rho(A)=\lim_{n\rightarrow\infty}\norm{A^n}^{\frac{1}{n}}=\inf_{n\in\N^*}\norm{A^n}^{\frac{1}{n}}.
\]
\end{lemma}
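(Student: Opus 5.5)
We would prove the three quantities equal through the chain
\[
\rho(A)\le \inf_{n\in\N^*}\norm{A^n}^{\frac1n}\le \liminf_{n\to\infty}\norm{A^n}^{\frac1n}\le\limsup_{n\to\infty}\norm{A^n}^{\frac1n}\le \rho(A).
\]
Throughout, the Banach space $X$ is complex; this is the setting in which the lemma is applied to $\hat{L}_0^{-1}$ on $L^2$. The middle inequalities are trivial. We would also record that the limit exists by Fekete's lemma. Indeed, $a_n=\ln\norm{A^n}$ is subadditive, since $\norm{A^{n+m}}\le\norm{A^n}\norm{A^m}$. Hence $a_n/n\to\inf_n a_n/n$, with the convention $-\infty$ if some $A^n=0$; in that case both sides are $0$.

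\textbf{Step 1: $\rho(A)\le\norm{A^n}^{1/n}$ for every $n$.} Let $\lambda\in\sigma(A)$. We factor
\[
A^n-\lambda^n I=(A-\lambda I)\,p(A)=p(A)\,(A-\lambda I),
\]
where $p$ is a polynomial, so the factors commute. If $A^n-\lambda^n I$ were invertible, then $A-\lambda I$ would have both a left and a right inverse, which is a contradiction. Hence $\lambda^n\in\sigma(A^n)$. Since the spectrum of a bounded operator lies in the disc of radius its norm, by the Neumann series, we get $|\lambda|^n\le\norm{A^n}$. Taking the supremum over $\lambda$ and then the infimum over $n$ gives the first inequality.

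\textbf{Step 2: $\limsup\norm{A^n}^{1/n}\le\rho(A)$.} This is the main step. The resolvent $R(\lambda)=(\lambda I-A)^{-1}$ is holomorphic, as an operator-valued map, on $\{|\lambda|>\rho(A)\}$. For $|\lambda|>\norm{A}$ it is given by the Neumann series
\[
R(\lambda)=\sum_{n\ge0}A^n\lambda^{-n-1}.
\]
Fix $x\in X$ and $f\in X^*$. The scalar function $g(\lambda)=f(R(\lambda)x)$ is holomorphic on $|\lambda|>\rho(A)$ and also at $\infty$. Its Laurent expansion at $\infty$ is $\sum_n f(A^nx)\lambda^{-n-1}$. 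By uniqueness of Laurent expansions on an annulus, this series converges on the whole region $|\lambda|>\rho(A)$. Thus, for fixed $\mu$ with $|\mu|>\rho(A)$, the sequence $f(A^nx)\mu^{-n}$ is bounded for each $f$ and $x$.

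Two applications of the uniform boundedness principle then give $\sup_n\norm{A^n\mu^{-n}}=C_\mu<\infty$. The first is applied to the vectors $A^nx\mu^{-n}$ viewed in $X^{**}$; the second to the operators $A^n\mu^{-n}$. Consequently
\[
\limsup_{n\to\infty}\norm{A^n}^{\frac1n}\le|\mu|\limsup_{n\to\infty} C_\mu^{\frac1n}=|\mu|,
\]
and letting $|\mu|\downarrow\rho(A)$ closes the chain.

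The main obstacle is Step 2, specifically the justification that $R(\lambda)$ is holomorphic on the full exterior of the spectrum. We would handle it through the resolvent identity
\[
R(\lambda)-R(\mu)=(\mu-\lambda)R(\lambda)R(\mu),
\]
together with continuity of inversion. This is where complex scalars are essential; the nonemptiness of $\sigma(A)$ (Liouville) is not needed for the inequality itself.
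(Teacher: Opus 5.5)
Your proof is correct, and it is the standard textbook argument. The lower bound $\rho(A)\le\inf_n\|A^n\|^{1/n}$ comes from the factorization $A^n-\lambda^nI=(A-\lambda I)p(A)$, which gives $\lambda^n\in\sigma(A^n)$. The upper bound $\limsup_n\|A^n\|^{1/n}\le\rho(A)$ comes from holomorphy of the resolvent on $\{|\lambda|>\rho(A)\}$, uniqueness of Laurent coefficients, and two applications of the uniform boundedness principle.

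The paper gives no proof of this lemma; it quotes it in the appendix as the classical Gelfand formula, so there is no competing route to compare. The paper uses the formula only through Lemma~\ref{lm:Gel_for_adjoint}. There, $\|A^{2^k}\|=\|A\|^{2^k}$ for self-adjoint $A$ is combined with the formula to obtain $\|\hat{L}_0^{-1}\|=\rho(\hat{L}_0^{-1})$. The direction actually needed for the bound on $\|\hat{L}_0^{-1}\|$ is $\|A\|\le\rho(A)$, which is exactly your Step~2, so you correctly identified where the substance lies.

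If you want a shorter Step~2, you can use the operator-valued Cauchy formula $A^n=\frac{1}{2\pi i}\oint_{|\lambda|=r}\lambda^nR(\lambda)\,d\lambda$ for $r>\rho(A)$. It gives $\|A^n\|\le r^{n+1}\max_{|\lambda|=r}\|R(\lambda)\|$ directly. This trades your two uses of uniform boundedness for Banach-space-valued contour integration.
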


When $A$ is a self-adjoint operator, we have a simple formula for its spectral radius.
\begin{lemma}\label{lm:Gel_for_adjoint}
Let $A$ be an bounded linear operator on a Hilbert space $X$. Assume that $A$ is self-adjoint. Then, its spectral radius equals to its norm i.e 
\[
\rho(A)=\norm{A}.
\]
\end{lemma}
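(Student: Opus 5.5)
The statement is Lemma \ref{lm:Gel_for_adjoint}: for a bounded self-adjoint operator $A$ on a Hilbert space $X$, we have $\rho(A)=\norm{A}$. The plan is to combine Gelfand's formula (Lemma \ref{lm:Gelfand formula}) with the identity $\norm{A^2}=\norm{A}^2$, which holds for self-adjoint operators.

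\textbf{Step 1: the $C^*$-identity.} For any $x\in X$ with $\norm{x}\le 1$, self-adjointness gives
\[
\norm{Ax}^2=(Ax,Ax)=(A^2x,x)\le \norm{A^2x}\,\norm{x}\le \norm{A^2}.
\]
Taking the supremum over such $x$ yields $\norm{A}^2\le \norm{A^2}$. The reverse inequality $\norm{A^2}\le\norm{A}^2$ is just submultiplicativity of the operator norm. Hence $\norm{A^2}=\norm{A}^2$.

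\textbf{Step 2: iterate along powers of two.} Every power $A^{2^{k}}$ is again bounded and self-adjoint, so Step 1 applies to it at each stage. Induction on $k$ then gives
\[
\norm{A^{2^k}}=\norm{A}^{2^k}\quad\text{for all }k\in\N .
\]

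\textbf{Step 3: apply Gelfand's formula.} By Lemma \ref{lm:Gelfand formula}, the limit $\lim_{n\to\infty}\norm{A^n}^{1/n}$ exists and equals $\rho(A)$. We may therefore compute it along the subsequence $n=2^k$:
\[
\rho(A)=\lim_{k\to\infty}\norm{A^{2^k}}^{2^{-k}}=\norm{A}.
\]

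The only step needing care is Step 1, specifically the use of $(Ax,Ax)=(A^2x,x)$, which is exactly where self-adjointness enters. The rest is formal.
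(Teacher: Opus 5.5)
Your proof is correct and follows the same route as the paper, which also establishes $\norm{A^2}=\norm{A}^2$ via $\norm{Ax}^2=(A^2x,x)\le\norm{A^2}\norm{x}^2$ together with submultiplicativity. Your Steps 2 and 3 (iterating to $\norm{A^{2^k}}=\norm{A}^{2^k}$ and evaluating Gelfand's limit along $n=2^k$) spell out the final deduction that the paper's proof leaves implicit after reaching $\norm{A^2}=\norm{A}^2$.
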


\begin{proof}
We have 
\[
\norm{A^2x}\leq \norm{A}\norm{Ax}\leq\norm{A}^2\norm{x},
\]
for all $x\in X$. Thus, $\norm{A^2}\leq\norm{A}^2$. \\
Moreover,
\[
\norm{Ax}^2=\left<Ax,Ax\right>\leq \left<A^2x,x\right>\leq \norm{A^2x}\norm{x}\leq \norm{A^2}\norm{x}^2.
\]
This implies that $\norm{A}^2\leq\norm{A^2}$ and hence the desired result is proved. 
\end{proof}

\section*{Acknowledgement} The author is supported by the VIASM Postdoctoral Fellowship. I would like to express my gratitude to VIASM for their warm hospitality and for providing a wonderful environment during my stay.

\bibliographystyle{abbrv}
\bibliography{paper}

\end{document}